\newtheorem{thm}{Theorem}[section]
\newtheorem{df}{Definition}[section]
\newtheorem{lm}{Lemma}[section]
\newtheorem{prop}{Proposition}[section]
\newcounter{saveeqn}%
\title{\Large\bf Dynamics of planar vector fields near a non-smooth equilibrium\thanks{
Supported by NSFC \#11871355 and CSC \#201906240094.
}
}
\author{Tao Li, ~~~~Xingwu Chen\!\!
\footnote{Author to whom any correspondence should be addressed. Email address: xingwu.chen@hotmail.com (X. Chen).}
\\
{\small Department of Mathematics, Sichuan University,}\\
{\small Chengdu, Sichuan 610064, P. R. China}
}
\date{}
\begin{document}
\maketitle

\begin{abstract}
In this paper we contribute to qualitative and geometric analysis of planar piecewise smooth vector fields, which consist of
two smooth vector fields separated by the straight line $y=0$ and sharing the origin as a non-degenerate equilibrium.
In the sense of $\Sigma$-equivalence,
we provide a sufficient condition for   linearization and give phase portraits and normal forms for these linearizable vector fields. This condition is hard to be weakened because there exist vector fields which are not linearizable when this condition is not satisfied. Regarding
perturbations, a necessary and sufficient condition for local $\Sigma$-structural stability is established
when the origin is still an equilibrium of both smooth vector fields under perturbations.
In the opposition to this case, we prove that for any piecewise smooth vector field studied in this paper there is a
limit cycle bifurcating from the origin, and there are some piecewise smooth vector fields such that
for any positive integer $m$ there is a perturbation having exactly $m$ limit cycles bifurcating from the origin. Here $m$ maybe infinity.
\vskip 0.2cm
{\bf 2010 MSC:} 34A36, 34C41, 37G05, 37G15.

{\bf Keywords:} limit cycle bifurcation, linearization, non-smooth equilibrium, normal form, structural stability.
\end{abstract}

\baselineskip 15pt
\parskip 10pt
\thispagestyle{empty}
\setcounter{page}{1}

\section{Introduction and statement of the main results}
\setcounter{equation}{0}
\setcounter{lm}{0}
\setcounter{thm}{0}
\setcounter{rmk}{0}
\setcounter{df}{0}
\setcounter{cor}{0}

Let ${\mathcal U}\subset\mathbb{R}^2$ be a bounded open set containing the origin $O$, $\mathfrak{X}$ be the set of all $\mathcal{C}^1$ vector fields defined on ${\mathcal U}$ and endowed with the $\mathcal{C}^1$-topology. We consider the piecewise smooth vector field
\begin{eqnarray}
Z(x, y)=\left\{
\begin{aligned}
  &X(x, y)=(X_1(x, y), X_2(x, y))~~~~&& {\rm if}~~ (x, y)\in\Sigma^+,\\
  &Y(x, y)=(Y_1(x, y), Y_2(x, y))~~~~&& {\rm if}~~ (x, y)\in\Sigma^-,\\
\end{aligned}
\right.
\label{sysp}
\end{eqnarray}
where $X, Y\in \mathfrak{X}$ and
$$\Sigma^+=\{(x, y)\in {\mathcal U}: y>0\}\qquad \Sigma^-=\{(x, y)\in {\mathcal U}: y<0\}.$$
Define $\Omega$ as the set of all $Z(x,y)$ satisfying (\ref{sysp}) and endowed with the product topology.
In past two decades, many researchers shift their interest to the study of piecewise smooth vector fields, because such vector fields are ubiquitous in mechanical engineering \cite{QC, HCJL}, feedback control systems \cite{MD, FGKP}, biological systems \cite{AYX, TSY}, electrical circuits \cite{MD}, etc.

Notice that the piecewise smooth vector field (\ref{sysp}) is not defined on $\Sigma=\{(x, y)\in {\mathcal U}: y=0\}$, called {\it discontinuity line} or {\it switching line}. Denote the vector field on $\Sigma$ by $Z_\Sigma$, which is usually defined by the so-called Filippov convention \cite{AFF}, see Section 2 for a review. Here $Z_\Sigma$ is naturally defined as $X$ or $Y$ if $X(x, y)\equiv Y(x, y)$ for all $(x, y)\in\Sigma$.
The vector field (\ref{sysp}), together with $Z_\Sigma$, are called a {\it Filippov vector field}.
In whole paper, speaking of the vector field $Z\in\Omega$, it always means that $Z=Z_\Sigma$ on $\Sigma$.
A point at which $Z\in\Omega$ vanishes is said to be an {\it equilibrium} or {\it singular point}. Hence, an equilibrium of $Z$ is
an equilibrium of either $X$ in $\Sigma^+$ or $Y$ in $\Sigma^-$ or $Z_\Sigma$ in $\Sigma$. Throughout this paper, we call it
a {\it smooth equilibrium} for the first two cases and a {\it non-smooth equilibrium} for the last case.

Regarding the local dynamics of $Z=(X, Y)\in\Omega$ near a smooth equilibrium, the investigation can be reduced to the local dynamics of the smooth vector field $X$ or $Y$ near this equilibrium and, with the efforts of many researchers, a large number of mature theories and methods have been established (see e.g., \cite{ZZF, B-YAK, JKHale}). Therefore, we focus on the local dynamics for non-smooth equilibria, which is more difficult than the smooth case because most theories and methods for smooth vector fields are no longer valid for non-smooth ones.
Although that, in recent twenty years some excellent results about limit cycle bifurcation, normal form and structurally stability
were given in textbooks \cite{MD, AFF} and
journal papers \cite{YAK, MG1, HZ, CGP, ZK, PGL, TCJLC, TCJ, ZZHFF}.
Let $\Omega_0\subset\Omega$ be the set of all piecewise smooth vector fields satisfying
\begin{eqnarray}
X(0, 0)=Y(0, 0)=(0, 0),~~~~~~\det A^+\det A^-\ne0
\label{adc}
\end{eqnarray}
and
\begin{eqnarray}
X_{2x}(0, 0)Y_{2x}(0, 0)>0,
\label{condi}
\end{eqnarray}
where $A^+$ (resp. $A^-$) is the Jacobian matrix of $X$ (resp. $Y$) at $O$ and
$X_{2x}, Y_{2x}$ denote the derivatives of $X_2, Y_2$ with respect to $x$, respectively.
(\ref{adc}) means that equilibrium $O$ is non-degenerate for both $X$ and $Y$, (\ref{condi}) means that
there exists a hollow neighborhood of $O$, in which there are no sliding points (see Section 2).

In this paper we study the local dynamics of vector field $Z=(X, Y)\in\Omega_0$ near $O$, which
is a non-smooth equilibrium of $Z$, i.e., $Z(0, 0)=Z_\Sigma(0, 0)=(0, 0)$.
Our first goal is to study the {\it local $\Sigma$-equivalence} between $Z=(X, Y)\in\Omega_0$ and its linear part
\begin{eqnarray}
Z_L(x, y)=
\left\{
\begin{aligned}
&X_L(x, y)=A^+(x, y)^\top~~~~~&& {\rm if}~~ (x, y)\in\Sigma^+,\\
&Y_L(x, y)=A^-(x, y)^\top~~~&& {\rm if}~~ (x, y)\in\Sigma^-\\
\end{aligned}
\right.
\label{pwl}
\end{eqnarray}
near $O$. Roughly speaking, the local $\Sigma$-equivalence is just the local topological equivalence preserving the switching line $\Sigma$.
A precise definition of local $\Sigma$-equivalence is stated in Section 2. A nonlocal definition of $\Sigma$-equivalence,
e.g., not in a neighborhood of equilibrium but in the whole domain of definition, was given in \cite[Definition 2.20]{MG1} and \cite[Definition 2.30]{MD}. One of motivations for this goal comes from the work \cite{XCZ}. In \cite[Theorem 2.2]{XCZ}, 19 different types of normal forms for $Z\in\Omega$ with (\ref{adc}) were obtained by using a continuous piecewise linear change of variables. 
We notice that in these normal forms the linear parts are normalized but the nonlinear parts are not normalized.
So, it is unknown that whether these nonlinear parts can be eliminated after normalization.
Another motivation is from smooth vector fields. A smooth vector field is locally topologically equivalent to its linear part near an equilibrium if all eigenvalues of the Jacobian matrix at this equilibrium have nonzero real part (see, e.g., \cite{PH} and \cite[Theorem 4.7]{ZZF}). Hence, it is a natural question to find conditions such that $Z\in\Omega_0$ is locally $\Sigma$-equivalent near $O$ to its linear part $Z_L$ given in (\ref{pwl}).

Let $\lambda^\pm_1$ and $\lambda^\pm_2$ be the eigenvalues of $A^\pm$, and
\begin{eqnarray}
\Omega_1=\{Z\in\Omega_0: \lambda^+_1\ne\lambda^+_2, \lambda^-_1\ne\lambda^-_2, \ell\ne0\},
\label{subsetdefinition}
\end{eqnarray}
where
\begin{eqnarray}
\ell=
\left\{
\begin{aligned}
&\frac{{\rm Re}\lambda^+_1}{|{\rm Im}\lambda^+_1|}+\frac{{\rm Re}\lambda^-_1}{|{\rm Im}\lambda^-_1|}~~~
 &&{\rm if}~~~~~{\rm Im}\lambda^+_1{\rm Im}\lambda^-_1\ne0,\\
&~1~~~~~~~~~~~&&{\rm if}~~~~~{\rm Im}\lambda^+_1{\rm Im}\lambda^-_1=0,
\end{aligned}
\right.
\label{eigen2}
\end{eqnarray}
${\rm Re}$ and ${\rm Im}$ denote the real and imaginary part of eigenvalues respectively. We have the first theorem as follows.

\begin{thm}
Every $Z\in\Omega_1$ is locally $\Sigma$-equivalent to its corresponding piecewise linear vector
field $Z_L$ of form {\rm (\ref{pwl})} near the origin. Moreover, the local phase portrait of
$Z$ near the origin is one of the 11 phase portraits presented
in Figure~\ref{localphaseportraits} in the sense of $\Sigma$-equivalence.
\label{normalform}
\end{thm}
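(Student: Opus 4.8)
The plan is to reduce the planar $\Sigma$-equivalence to a one-dimensional conjugacy problem on the switching line, exploiting the fact that condition (\ref{condi}) forces the orbits of $Z$ to cross $\Sigma$ transversally with a fixed orientation in a punctured neighborhood of $O$. First I would show that (\ref{condi}), together with the non-degeneracy (\ref{adc}), rules out sliding and makes $\Sigma\setminus\{O\}$ a local cross-section: the sign of $X_2(x,0)\approx X_{2x}(0,0)\,x$ (and likewise for $Y$) means the flow of $X$ carries one half of $\Sigma$ into $\Sigma^+$ and returns it, while $Y$ does the analogous job in $\Sigma^-$. This lets me define the two half-transition maps $T^+,T^-$ on $\Sigma$ and, where orbits recur, the first-return map $R=T^-\circ T^+$; applying the same construction to $Z_L$ of form (\ref{pwl}) yields linear models $T^\pm_L$. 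In the recurrent (focus/centre) case the linear return map is $x\mapsto e^{\pi\ell}x$, so that the sign of $\ell$ in (\ref{eigen2}) records whether the combined spiralling is stable, unstable, or (exactly when $\ell=0$) of centre type. Thus the excluded value $\ell=0$ is precisely the loss of hyperbolicity of $R$.

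The technical core is to conjugate each nonlinear half-map to its linear model by a homeomorphism of $\Sigma$. For a piece whose Jacobian $A^\pm$ is hyperbolic---a saddle, a node, or a focus with nonzero real part---this follows from the Hartman--Grobman theorem applied to that smooth component; the extra requirement that the linearizing homeomorphism preserve $\Sigma$ I would arrange by prescribing it on the cross-section $\Sigma$ first and then transporting it along the flow, which automatically sends orbits to orbits and fixes $\Sigma$ setwise. The delicate situation is a piece with purely imaginary eigenvalues, where Hartman--Grobman is unavailable. Here I would instead use that, because $X,Y\in\mathfrak{X}$ are $\mathcal{C}^1$, the nonlinear half-map shares the derivative of its linear model at $0$, so that $T^+(x)=(T^+_L)'(0)\,x+o(x)$ and, in the recurrent case, $R(x)=e^{\pi\ell}x+o(x)$. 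Since $\ell\neq0$ gives multiplier $e^{\pi\ell}\neq1$, the elementary one-dimensional fact that a $\mathcal{C}^1$ map with a hyperbolic fixed point is topologically conjugate to its linear part produces a conjugacy between $R$ and $x\mapsto e^{\pi\ell}x$ on each half of $\Sigma$.

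With the conjugacy on $\Sigma$ in hand I would assemble the global homeomorphism $h$ by extending it off $\Sigma$ along the orbits of $Z$ and $Z_L$: away from $O$ the flow-box theorem makes this extension continuous in each half-plane, and the two halves match because they share the same prescribed map on $\Sigma$. Preservation of time orientation and of $\Sigma$ then delivers the local $\Sigma$-equivalence between $Z$ and $Z_L$. To obtain the list of $11$ phase portraits in Figure~\ref{localphaseportraits}, I would enumerate the admissible combinations of eigenvalue types for $A^+$ and $A^-$ permitted by (\ref{adc}) and $\ell\neq0$, then identify those that coincide under a $\Sigma$-preserving homeomorphism---in particular reflection across $\Sigma$ and time reversal---reading off the node-, saddle-, focus-, and closed-orbit sectors contributed by each half-plane.

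The step I expect to be the main obstacle is the purely imaginary case together with the compatibility of the two half-plane constructions along $\Sigma$. For a centre piece the linear part alone does \emph{not} determine the nonlinear phase portrait---a weak focus and a genuine centre are indistinguishable at the linear level---so the argument must be genuinely global: it is only the composite return map across both half-planes, whose multiplier $e^{\pi\ell}$ is pinned down by $\ell\neq0$, that is hyperbolic and hence rigid enough to absorb the $o(x)$ remainder of a centre-type transition map. Controlling this remainder uniformly so as to run the one-dimensional conjugacy, and verifying that the resulting map on $\Sigma$ agrees with the Hartman--Grobman homeomorphism coming from a hyperbolic partner piece, is where the real work lies.
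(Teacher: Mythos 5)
Your core strategy coincides with the paper's. The observation that (\ref{condi}) eliminates sliding near $O$ is the paper's Lemma~\ref{slidy}; your treatment of the recurrent (focus--focus) case --- return map on $\Sigma$ of the form $e^{\ell\pi}x+o(x)$, hyperbolic precisely because $\ell\ne0$, one-dimensional topological linearization of this map, and extension of the resulting conjugacy on $\Sigma$ into the two half-planes by transport along orbits --- is exactly the proof of Lemma~\ref{ff}, which cites Hartman's conjugacy theory for smooth maps at the same spot. That you conjugate $Z$ to $Z_L$ directly, while the paper conjugates both $Z$ and $Z_L$ to canonical models $Z_{ff},\dots,Z_{ss}$ (thereby also producing the normal forms and the list in Figure~\ref{localphaseportraits}), is only an organizational difference.

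The genuine gap is in your extension step for the pieces with real eigenvalues. You propose to obtain $\Sigma$-preservation by prescribing the homeomorphism on $\Sigma$ and ``transporting it along the flow,'' with continuity from flow boxes; but when a piece is a node or a saddle, $\Sigma$ is \emph{not} a cross-section for that half-plane: near $O$ there are entire sectors of orbits that never meet $\Sigma$. For the saddle piece these are the orbits in the hyperbolic sector between the two separatrices lying inside the open half-plane (for $Y_{fs}(x,y)=(y,x)$, the branches of $x^2-y^2=c$, $c<0$, with $y<0$); for the node piece they are the orbits that enter through the outer boundary arc and tend to $O$, together with the invariant manifolds themselves. On such sectors your map is simply undefined, and Hartman--Grobman, which you invoke for exactly these pieces, cannot fill the hole since (as you concede) its conjugacy need not preserve $\Sigma$ --- your proposed fix is the very transport argument that fails. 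The paper resolves this by adding auxiliary sections and gluing sector by sector: arc-length identifications of the outer arcs (the map $H^0_n$) and of the curve $\{Y_2=0\}$ inside the saddle sector (the map $H^0_s$), with agreement checked along the strong manifold and the separatrices, cf.\ (\ref{urcnkfh}), (\ref{anfjfcdsfm}), (\ref{anfjfcafdsfm}) in Lemmas~\ref{fn}--\ref{ssss}. Note also that your assessment of where the difficulty lies is inverted: the compatibility along $\Sigma$ in the recurrent case, which you single out as the main obstacle, comes for free from the conjugacy equation $h(\mathcal{P}_1(x))=\mathcal{Q}_1(h(x))$ (this is exactly how the paper verifies $\left.H^-\right|_{\overline{OC}}=\left.H_0\right|_{\overline{OC}}$), whereas the sector bookkeeping you dismiss with flow boxes is where the paper's proof spends most of its effort.
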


\begin{figure}
  \begin{minipage}[t]{0.24\linewidth}
  \centering
  \includegraphics[width=1.4in]{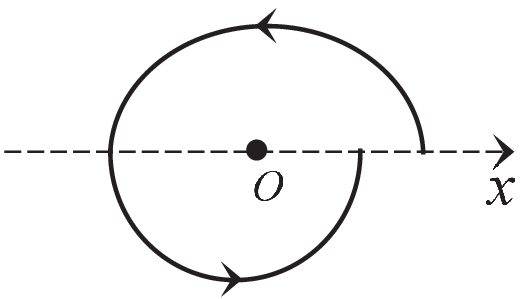}
  \caption*{{\small (FF-1)}}
  \end{minipage}
  \begin{minipage}[t]{0.24\linewidth}
  \centering
  \includegraphics[width=1.39in]{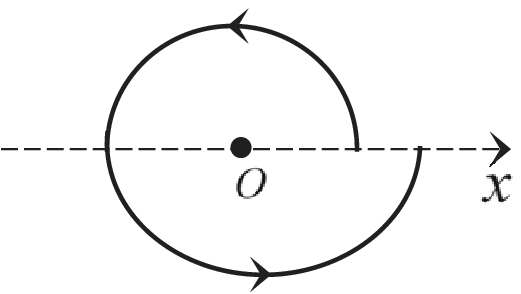}
  \caption*{{\small (FF-2)}}
  \end{minipage}
  \begin{minipage}[t]{0.24\linewidth}
  \centering
  \includegraphics[width=1.4in]{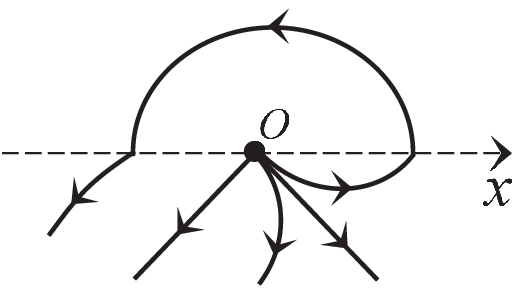}
  \caption*{{\small (FN-1)}}
  \end{minipage}
  \begin{minipage}[t]{0.24\linewidth}
  \centering
  \includegraphics[width=1.42in]{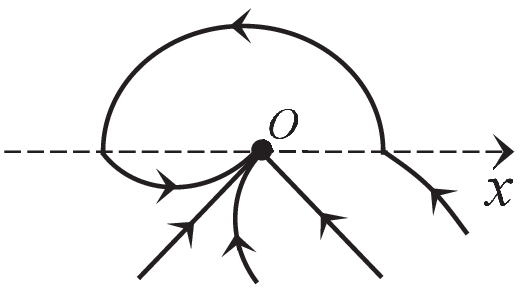}
  \caption*{{\small (FN-2)}}
  \end{minipage}
  \begin{minipage}[t]{0.24\linewidth}
  \centering
  \includegraphics[width=1.4in]{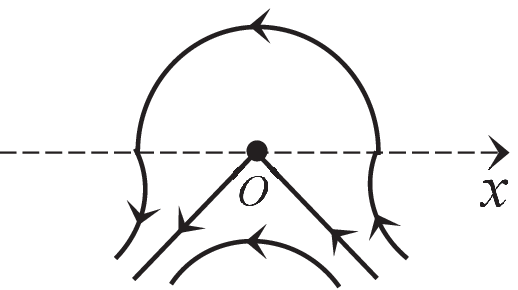}
  \caption*{{\small (FS)}}
  \end{minipage}
  \begin{minipage}[t]{0.24\linewidth}
  \centering
  \includegraphics[width=1.4in]{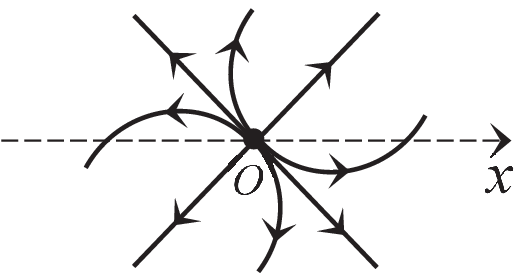}
  \caption*{{\small (NN-1)}}
  \end{minipage}
  \begin{minipage}[t]{0.24\linewidth}
  \centering
  \includegraphics[width=1.4in]{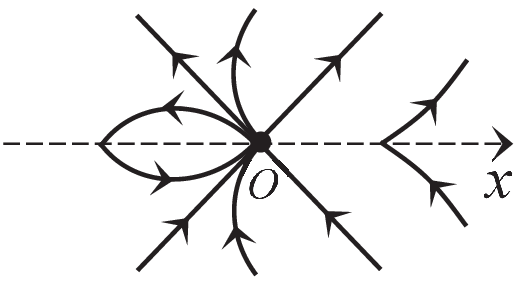}
  \caption*{{\small (NN-2)}}
  \end{minipage}
  \begin{minipage}[t]{0.24\linewidth}
  \centering
  \includegraphics[width=1.4in]{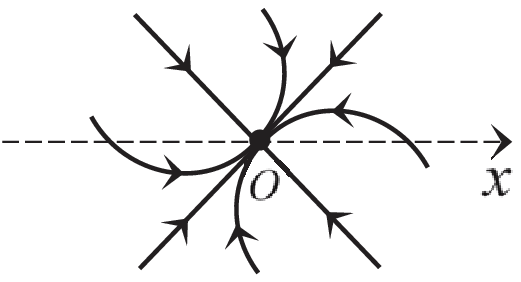}
  \caption*{{\small (NN-3)}}
  \end{minipage}
  \begin{minipage}[t]{0.24\linewidth}
  \centering
  \includegraphics[width=1.4in]{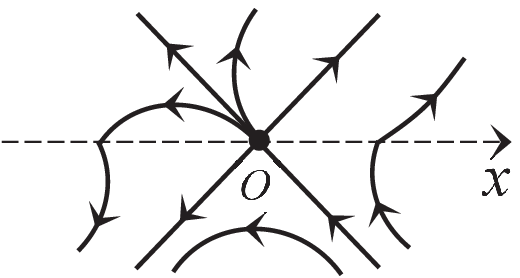}
  \caption*{{\small (NS-1)}}
  \end{minipage}~~
  \begin{minipage}[t]{0.24\linewidth}
  \centering
  \includegraphics[width=1.4in]{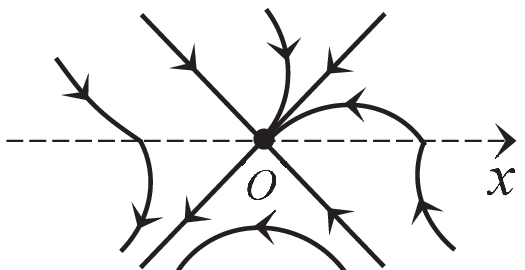}
  \caption*{{\small (NS-2)}}
  \end{minipage}~~
  \begin{minipage}[t]{0.24\linewidth}
  \centering
  \includegraphics[width=1.42in]{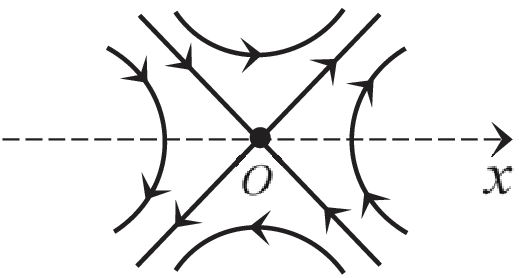}
  \caption*{{\small (SS)}}
  \end{minipage}
\caption{{\small Local phase portraits of $Z\in\Omega_1$ near the origin.}}
\label{localphaseportraits}
\end{figure}

Theorem~\ref{normalform} is proved in Section 3, where we present a normal form for each one of these $11$ kinds of
phase portraits shown in Figure~\ref{localphaseportraits}.
We remark that the first part of Theorem~\ref{normalform} can be regarded as a
generalisation of \cite[Theorem 4.7]{ZZF} from smooth vector fields to piecewise smooth vector fields. We clarify some differences between the requirements for eigenvalues in these two theorems as follows.
In \cite[Theorem 4.7]{ZZF} it is required that all eigenvalues of the Jacobian matrix at a smooth equilibrium have nonzero real part in order that
the smooth vector field is topologically equivalent to its linear part near this equilibrium.
However, in Theorem~\ref{normalform} we require that all eigenvalues of the Jacobian matrixes $A^+$ and $A^-$ at $O$, namely the non-smooth equilibrium, satisfy
$$\lambda^\pm_1\lambda^\pm_2\ne0,~~~~~\lambda^+_1\ne\lambda^+_2,~~~~~~\lambda^-_1\ne\lambda^-_2,~~~~~\ell\ne0$$
by the definition of $\Omega_1$ given in (\ref{subsetdefinition}).
Comparing the requirements of \cite[Theorem 4.7]{ZZF} with our Theorem~\ref{normalform}, we see that
\cite[Theorem 4.7]{ZZF} does not allow pure imaginary eigenvalues but Theorem~\ref{normalform} allows.
On the other hand, by \cite[Theorem B]{CGP} or \cite[Theorem 1.2]{HZ} the condition $\ell\ne0$ in Theorem~\ref{normalform}
excludes the case that $O$ is a non-smooth center of the linear part.
It is not hard to give an example showing the non-equivalence when $O$ is a non-smooth center of the linear part.
Another difference is that \cite[Theorem 4.7]{ZZF} allows the Jacobian matrix to have the same eigenvalue,
but Theorem~\ref{normalform} does not allow this for both Jacobian matrices $A^+$ and $A^-$. We give an example to show the non-equivalence when the Jacobian matrix $A^+$ or $A^-$ has the same eigenvalue in Section 3.

Our second goal is to study the structural stability of $Z\in \Omega_0$ in the sense of $\Sigma$-equivalence, i.e.,
{\it $\Sigma$-structural stability} as defined in \cite[p.1978]{MG1}. Usually, $Z\in\Omega_0$ is not $\Sigma$-structurally stable when the perturbation is inside $\Omega$ because $O$ can be destroyed under such a perturbation and the so-called boundary equilibrium bifurcation occurs \cite{YAK}. Thus the only interest is to consider the $\Sigma$-structural stability of $Z\in\Omega_0$ with respect to $\Omega_0$, i.e., the perturbation is inside $\Omega_0$. In particular, we focus on the local $\Sigma$-structural stability of $Z\in\Omega_0$ near $O$. Roughly speaking, $Z\in\Omega_0$ is said to be {\it locally $\Sigma$-structurally stable with respect to $\Omega_0$ near $O$}
if any vector field that lies in a sufficiently small neighborhood of $Z$ contained in $\Omega_0$ is locally $\Sigma$-equivalent to $Z$ near $O$.

\begin{thm}
$Z\in\Omega_0$ is locally $\Sigma$-structurally stable with respect to $\Omega_0$ near the origin if and only if $Z\in\Omega_1$,
where $\Omega_0$ is defined above {\rm (\ref{adc})} and $\Omega_1$ is defined in {\rm(\ref{subsetdefinition})}.
\label{stability}
\end{thm}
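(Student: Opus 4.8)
The plan is to establish the two implications separately, using Theorem \ref{normalform} together with the fact that the three conditions cutting $\Omega_1$ out of $\Omega_0$ are open.

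\textbf{Sufficiency ($Z\in\Omega_1\Rightarrow$ stability).} First I would verify that $\Omega_1$ is open in $\Omega_0$. Indeed, $\lambda^\pm_1\ne\lambda^\pm_2$ amounts to the discriminant of the characteristic polynomial of $A^\pm$ being nonzero, an open condition; and, the sign of each discriminant being locally constant, in a neighbourhood either both halves retain complex eigenvalues, so that $\ell$ in \eqref{eigen2} is a continuous function of the entries of $A^\pm$ and $\ell\ne0$ persists, or at least one half retains real eigenvalues, whence $\ell\equiv1\ne0$. Thus every $Z\in\Omega_1$ has a neighbourhood $\mathcal N\subset\Omega_0$ with $\mathcal N\subset\Omega_1$. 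Next I would note that the class of $Z$ among the $11$ portraits of Figure \ref{localphaseportraits} is determined by finitely many quantities, each a strict inequality between continuous functions of the entries of $A^\pm$: the sign of each discriminant (focus, node or saddle), the signs of the real eigenvalues in the node and saddle halves (which cannot vanish since $\det A^\pm\ne0$), and the sign of $\ell$. All of these are locally constant on $\Omega_1$, so after shrinking $\mathcal N$ to be connected every $\ti Z\in\mathcal N$ lies in the same class as $Z$. By Theorem \ref{normalform} and the normal forms of Section 3, a field in a given class is locally $\Sigma$-equivalent to the normal form of that class, so both $\ti Z$ and $Z$ are locally $\Sigma$-equivalent to that common representative; since local $\Sigma$-equivalence is an equivalence relation, $\ti Z$ is locally $\Sigma$-equivalent to $Z$, which is the assertion.

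\textbf{Necessity.} If $Z\in\Omega_0\setminus\Omega_1$ then at least one of $\lambda^+_1=\lambda^+_2$, $\lambda^-_1=\lambda^-_2$, $\ell=0$ holds, and it suffices to produce two vector fields in $\Omega_0$, arbitrarily close to $Z$, that are not locally $\Sigma$-equivalent to each other: for if $Z$ were stable both would be locally $\Sigma$-equivalent to $Z$, hence to each other. When $\lambda^+_1=\lambda^+_2$ the common eigenvalue is real and nonzero by \eqref{adc}; adding a small linear term to $X$ (so that $\ti X(0,0)=0$ is preserved) I split $A^+$ either into two distinct real eigenvalues of the same sign, giving a node in $\Sigma^+$, or into a complex conjugate pair, giving a focus in $\Sigma^+$, keeping $\det A^+\ne0$ and $X_{2x}(0,0)Y_{2x}(0,0)>0$ so that both perturbations remain in $\Omega_0$; these fall in different classes of Figure \ref{localphaseportraits}. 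The case $\lambda^-_1=\lambda^-_2$ is symmetric. When $\ell=0$, definition \eqref{eigen2} forces both halves to have complex eigenvalues and, by \cite[Theorem B]{CGP} (or \cite[Theorem 1.2]{HZ}), $O$ is a non-smooth center of $Z_L$; perturbing the real parts of the eigenvalues so that $\ell>0$ or $\ell<0$ produces respectively an unstable and a stable spiral at $O$, which are not locally $\Sigma$-equivalent.

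\textbf{Main obstacle.} The crux is the necessity step, namely proving the constructed perturbations genuinely inequivalent, since the usual planar fact that a node and a focus are topologically equivalent breaks down once $\Sigma$ must be preserved: an orbit of the focus meets $\Sigma$ infinitely often as it spirals toward $O$, whereas orbits near the node meet $\Sigma$ only finitely often, so the number of intersections with $\Sigma$ is a $\Sigma$-invariant separating them. For the case $\ell=0$ the distinction between the stable and unstable spirals rests on the first-return map of $Z$ along $\Sigma$, whose derivative at $O$ equals $e^{\pi\ell}$; this differs from $1$ as soon as $\ell\ne0$ and dominates the higher-order terms for small $r$, pinning the stability of $O$ and producing the limit-cycle bifurcation announced in the abstract. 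Making these $\Sigma$-invariants precise, and confirming that each perturbation remains within $\Omega_0$, is the main work.
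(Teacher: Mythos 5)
Your overall route coincides with the paper's: sufficiency via openness of the eigenvalue/classification conditions, the normal forms behind Theorem~\ref{normalform}, and transitivity of local $\Sigma$-equivalence; necessity by exhibiting, arbitrarily close to any $Z\in\Omega_0\setminus\Omega_1$, two perturbations that are not locally $\Sigma$-equivalent to each other, split into the cases $\ell=0$ and repeated eigenvalues, with the repeated eigenvalue split into a node or a focus. Your sufficiency argument and your $\ell=0$ case are essentially identical to the paper's proof (the paper also perturbs only the linear part to get $\ell>0$ and $\ell<0$ and distinguishes the two pseudo-foci by stability).

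The gap is in the repeated-eigenvalue case, in two places. First, the claim that the two perturbations ``fall in different classes of Figure~\ref{localphaseportraits}'' is not available in general: you perturb only $X$, so if $Y$ also has a repeated eigenvalue (both factors of $(\lambda^+_1-\lambda^+_2)(\lambda^-_1-\lambda^-_2)$ can vanish simultaneously) the perturbed fields still lie in $\Omega_0\setminus\Omega_1$ and Figure~\ref{localphaseportraits} says nothing about them; and even when they do land in $\Omega_1$, deducing non-equivalence from ``different classes'' presupposes that the $11$ classes are pairwise non-$\Sigma$-equivalent, which is precisely the kind of statement your invariant is supposed to establish. Second, the invariant you propose --- a focus orbit meets $\Sigma$ infinitely often while node orbits meet it finitely often --- is false for the piecewise field: once an orbit crosses $\Sigma$ it is governed by the unperturbed $Y$, and if, say, $Y$ has a stable node at $O$, then every orbit of the focus-perturbed field also meets $\Sigma$ only finitely often (it crosses into $\Sigma^-$ and is absorbed), so the intersection count does not separate the two perturbations. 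The paper's invariant is the correct replacement and depends only on the perturbed half: by (\ref{condi}) and Lemma~\ref{slidy} the punctured line $\Sigma\setminus\{O\}$ consists of crossing points; for the focus perturbation there are orbit arcs contained in the closed upper half-plane joining the positive to the negative $x$-axis arbitrarily close to $O$, whereas for the node perturbation, whose two characteristic directions have nonzero slope because $a^+_{21}=X_{2x}(0,0)\ne0$, the invariant rays in $\Sigma^+$ bound sectors and no such arc exists. Since a local $\Sigma$-equivalence preserves $\Sigma$, orbits, and their intersections with $\Sigma$, this distinguishes the two perturbations whatever $Y$ is; with this substitution your plan goes through.
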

\vspace{-13pt}
Theorem~\ref{stability} is proved in Section 4.

The third goal of this paper is devoted to the study of limit cycle bifurcations, more precisely, identify the existence and number of crossing limit cycles bifurcating from the non-smooth equilibrium $O$ of a piecewise smooth vector field $Z=(X, Y)\in\Omega_0$. Here a limit cycle is said to be a {\it crossing limit cycle} if it intersects the switching line $\Sigma$ only at crossing points (see Section 2).
Many works about limit cycle bifurcations are done for the case that $O$ is of focus-focus type, i.e., an equilibrium of focus type for both $X$ and $Y$.
See, e.g., \cite{CGP, ZK, KM, Xingwu, XCZ, CNTT, LHHH} for the perturbations in $\Omega_0$ and \cite{HZ, YMH} for the perturbations in $\Omega$.
Such bifurcation is analogous to the Hopf bifurcation of smooth vector fields. Then a natural question is whether limit cycles can bifurcate from $O$ for other cases, for instance $O$ is of focus-saddle type, focus-node type, etc.
Since bifurcations usually depend on the type of local phase portraits of the unperturbed systems and
there exist many kinds of possibilities as obtained in Theorem~\ref{normalform},
in this paper we do not establish the bifurcation diagrams one by one but give some universal results on the limit cycle bifurcations for all unperturbed vector fields in $\Omega_0$.

\begin{thm}
For $\Omega_0$ defined above {\rm(\ref{adc})} and its subset $\Omega_1$ defined in {\rm(\ref{subsetdefinition})}, the following statements hold.
\vspace{-13pt}
\begin{description}
\setlength{\itemsep}{-0.8mm}
\item{\rm(1)} For any $Z\in\Omega_0$ and any small neighborhood $\mathcal{N}\subset\Omega$ of $Z$, there exists a vector field in $\mathcal{N}$ having a crossing limit cycle bifurcating from the non-smooth equilibrium $O$ of $Z$.
\item{\rm(2)} There exists a $Z_0\in\Omega_1$ {\rm(}resp. $\Omega_0\setminus\Omega_1${\rm)} such that, for any $m\in\mathbb{N}^+\cup\{\infty\}$ and any small neighborhood $\mathcal{N}\subset\Omega$ of $Z_0$, there exists a vector field in $\mathcal{N}$ having exactly $m$ hyperbolic crossing limit cycles bifurcating from the non-smooth equilibrium $O$ of $Z_0$.
    \end{description}
\label{bifurcation}
\end{thm}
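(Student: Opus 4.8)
The plan is to read off crossing limit cycles as simple zeros of a scalar displacement function on $\Sigma$, and to produce them by arbitrarily small perturbations inside $\Omega$. Parametrising $\Sigma$ near $O$ by $x$, I would introduce the two half--return maps $P^{+}$ and $P^{-}$: $P^{+}$ follows the forward $X$--flow through $\Sigma^{+}$ from one point of $\Sigma$ to the next, and $P^{-}$ does the same with $Y$ through $\Sigma^{-}$. A crossing periodic orbit is then a zero of the displacement $\Delta(x)=P^{-}(P^{+}(x))-x$, and it is a hyperbolic crossing limit cycle precisely when the zero is simple. By Theorem~\ref{normalform} it suffices to start from the piecewise linear normal forms, for which $\Delta$ and its leading coefficients are explicit. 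The decisive structural fact is that a linear orbit with distinct real eigenvalues meets the line $y=0$ at most once, so such a piece never returns to $\Sigma$ once it starts on it; hence $P^{\pm}$ is defined on a full one--sided neighbourhood of $O$ only when $A^{\pm}$ has non--real eigenvalues, and $O$ is monodromic exactly in the focus--focus case, where one computes $P^{-}\circ P^{+}(x)=e^{\ell\pi}x+o(x)$ with $\ell$ as in (\ref{eigen2}). In the remaining nine normal forms at least one piece is a node or a saddle whose crossing orbits leave every neighbourhood of $O$, so that $\Delta$ is not even defined and $O$ fails to be monodromic.

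For statement (1) in the focus--focus case I would exploit $\Delta(x)=(e^{\ell\pi}-1)x+a_{2}x^{2}+\cdots$. When $\ell$ is small the linear term is weak, and a small variation of the real parts of the eigenvalues, driving $\ell$ through $0$, creates a simple zero $x^{\ast}=O(|\ell|)$ provided the first nonzero higher coefficient does not vanish; this is the non--smooth Hopf bifurcation. For a strong focus, and for every non--monodromic type, the rigidity of the eigenvalue configuration forbids such a purely linear mechanism, and the idea is instead to \emph{manufacture} the missing return by a perturbation that deliberately leaves $\Omega_{0}$: adding small constants to $X$ and $Y$ displaces their equilibria off $\Sigma$ and, by violating (\ref{condi}) on a short arc, opens a sliding window bounded by two folds. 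The role of the folds is to restore monodromicity on a scale shrinking to $0$ with the perturbation, so that a genuine local return map reappears; one then tunes the fold and sliding data so that this map has a single simple fixed point, i.e. one hyperbolic crossing limit cycle collapsing onto $O$.

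For statement (2) I would fix two carefully chosen germs $Z_{0}$ and \emph{pre--design} their higher--order terms so that the displacement of a suitable unfolding is completely under control. For the representative in $\Omega_{0}\setminus\Omega_{1}$ I would take a focus--focus field with $\ell=0$, a non--smooth centre of the linear part, so that $\Delta$ is governed entirely by the nonlinearity; choosing these nonlinear terms to be $C^{\infty}$ but non--analytic lets me realise, after a small unfolding, a displacement of the form $x^{k}\bigl(\prod_{j=1}^{m}(x-x_{j})+\text{flat}\bigr)$ with $m$ prescribed simple positive zeros, and a flat factor oscillating like $\sin(1/x)\,e^{-1/x}$ to obtain a sequence of simple zeros accumulating at $O$, giving $m=\infty$. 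For the representative in $\Omega_{1}$ I would instead start from a node-- or saddle--type germ (for which $\ell=1\neq0$ automatically, so the germ lies in $\Omega_{1}$), where the sliding/fold construction of part (1) supplies the return map; engineering the same non--analytic data inside that return map again yields exactly $m$ hyperbolic crossing limit cycles for every $m\in\mathbb{N}^{+}\cup\{\infty\}$. Hyperbolicity is free throughout, since all the zeros are constructed simple.

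The hard part will be the non--monodromic half of (1): for a node or a saddle piece the unperturbed germ is genuinely non--recurrent, and because these eigenvalue types are open no perturbation staying in $\Omega_{0}$ can create a return map at all, which is exactly why one is forced outside $\Omega_{0}$ and into the fold/sliding regime. Showing that the induced two--fold configuration is of the monodromic subtype and that its unfolding really emits a simple crossing cycle shrinking to $O$---uniformly over all nine non--monodromic normal forms---is the delicate core of the argument. By contrast, once a controllable return map is in hand, the analytic input for (2), namely prescribing a flat displacement with a given finite or infinite number of simple zeros, is comparatively routine.
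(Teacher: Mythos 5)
Your core mechanism for statement (1) --- pushing both equilibria off $\Sigma$ by small constant terms so that $O$ becomes an invisible--invisible fold--fold with a genuine local return map, then tuning the fold/sliding data to extract one crossing limit cycle --- is exactly the paper's mechanism: its three-parameter family $Z^{\bm\epsilon}$ performs the constant (and linear) shift with $\epsilon_1$, and the tuning is Proposition~\ref{pseudohopf} (pseudo-Hopf) applied through $\epsilon_3$. Likewise your plan for statement (2) --- a piecewise linear germ with a saddle piece for $\Omega_1$, a center--center germ with $\ell=0$ for $\Omega_0\setminus\Omega_1$, perturbed so that the displacement has $m$ prescribed simple zeros via a polynomial factor and infinitely many via a flat factor $e^{-1/x}\sin(\pi\epsilon/x)$ --- is in substance the paper's Propositions~\ref{ejhfjncjff} and~\ref{ejhadafjff}; just make sure the $m$-dependent zeros live in the perturbation at scale $O(\epsilon)$ (so the cycles really shrink to $O$), not in ``pre-designed higher-order terms'' of the fixed germ $Z_0$, which must be chosen before $m$ is. However, two steps of your part (1) do not hold up. First, the opening reduction ``by Theorem~\ref{normalform} it suffices to start from the piecewise linear normal forms'' is not legitimate for a bifurcation statement: local $\Sigma$-equivalence between $Z$ and $Z_L$ induces no correspondence between perturbations of $Z$ and perturbations of $Z_L$, and in any case part (1) quantifies over all of $\Omega_0$, including $\Omega_2$ ($\ell=0$) and $\Omega_3$ (repeated eigenvalues), where Theorem~\ref{normalform} supplies no normal form at all (Proposition~\ref{example} shows linearization can genuinely fail there). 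The construction has to be carried out on $Z$ itself, which is what the paper does.

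Second, and this is the real gap: the monodromic case with $\ell=0$ is covered by neither of your two mechanisms. Your Hopf-type argument requires ``the first nonzero higher coefficient'' of $\Delta$ to exist and be nonzero; but $\Omega_0$ contains genuine non-smooth centers (e.g.\ $X=Y=(-y,x)$), for which every such coefficient vanishes --- and since $X,Y$ are merely $\mathcal{C}^1$, the expansion $\Delta(x)=(e^{\ell\pi}-1)x+a_2x^2+\cdots$ need not even exist beyond the linear term. Your fold/sliding construction is explicitly reserved for strong foci and the non-monodromic types; and even if you ran it on a center you would meet the same obstruction one level down: after the constant shift, the resulting fold--fold system may still be a non-smooth center, and then opening the sliding window (your ``tuning'', i.e.\ pseudo-Hopf) produces no limit cycle at all, because Proposition~\ref{pseudohopf} requires a pseudo-focus --- for a center the perturbed orbits simply all reach the sliding segment and no fixed point of the return map is created. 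This is precisely why the paper inserts the additional perturbation $\epsilon_2 Y_2$ into the first component of $Y$ and proves, via the differential-inequality comparison of the two lower half-return maps in (\ref{ineqcanf})--(\ref{afineqcanf}), that it converts the center into an unstable pseudo-focus before invoking pseudo-Hopf. Without such a center-breaking step your proof of statement (1) is incomplete exactly on the stratum $\Omega_2$, which statement (1) must cover.
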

\vspace{-13pt}

Theorem~\ref{bifurcation} is proved in Section 5.
Note that even though our main motivation is to consider the case of piecewise smooth vector fields, the set $\Omega_0$ also includes the smooth
vector fields with $X\equiv Y$ having $O$ as a non-degenerate equilibrium. Thus it follows from the statement (1) of Theorem~\ref{bifurcation}
that limit cycles can bifurcate from a rough focus, saddle or node of smooth vector fields under non-smooth perturbations. This is impossible under smooth perturbations.

This paper is organized as follows. In Section 2 we shortly recall basic notions and results on piecewise smooth vector fields.
In Section 3 we give the proof of Theorem~\ref{normalform}, and an example showing that the vector field in $\Omega_0$ might not
be locally $\Sigma$-equivalent to its linear part near the origin if the Jacobian matrix $A^+$ or $A^-$ has the same eigenvalue.
The proofs of Theorems~\ref{stability} and \ref{bifurcation} are given in Sections 4 and 5, respectively.

\section{Preliminaries}
\setcounter{equation}{0}
\setcounter{lm}{0}
\setcounter{thm}{0}
\setcounter{rmk}{0}
\setcounter{df}{0}
\setcounter{cor}{0}

For the sake of completeness, in this section we shortly review some basic notions and results on piecewise smooth vector fields,
especially Filippov vector fields. Section 2.1 contains the definitions of vector field $Z_\Sigma$ on $\Sigma$ and
all kinds of singularities. Moreover, the local $\Sigma$-equivalence is also clarified in Section 2.1. In Section 2.2 we state
the pseudo-Hopf bifurcation for a special class of piecewise smooth vector fields in order to prove our results conveniently.

\subsection{Notions on piecewise smooth vector fields}
Consider the piecewise smooth vector field $Z\in\Omega$ given in (\ref{sysp}). First we clarify the definition of vector field $Z_\Sigma$ on $\Sigma$ by the Filippov convention \cite{AFF}. To do this, $\Sigma$ is divided into
the {\it crossing set}
$$\Sigma^c=\{(x, y)\in\Sigma: X_2(x, y)\cdot Y_2(x, y)>0\},$$
and the {\it sliding set}
$$\Sigma^s=\{(x, y)\in\Sigma: X_2(x, y)\cdot Y_2(x, y)\le0\},$$
as in \cite{YAK, AFF}.
The points in $\Sigma^c$ and $\Sigma^s$ are called {\it crossing points} and {\it sliding points} respectively.
For $(x, y)\in\Sigma^c$, $X$ and $Y$ are both transversal to $\Sigma$ and their normal components have the same sign, so that the orbit passing through $(x, y)$ crosses $\Sigma$ at $(x, y)$ and it is a continuous, but non-smooth curve. This means that we can define $Z_\Sigma$ at $(x, y)$ as any one of $X$ and $Y$. For concreteness, in this paper we specify
$$
Z_\Sigma(x, y)=\left\{
\begin{aligned}
&Y(x, y)~~~&&{\rm if} ~~(x, y)\in\Sigma^c,~~ X_2(x, y)<0,\\
&X(x, y)~~~&&{\rm if} ~~(x, y)\in\Sigma^c,~~ X_2(x, y)>0.\\
\end{aligned}
\right.
$$
For $(x, y)\in\Sigma^s$, either the normal components of $X$ and $Y$ to $\Sigma$ have the opposite sign or at least one of them vanishes. In this case $Z_\Sigma$ is defined such that it is tangent to $\Sigma^s$. Particularly, if $Y_2(x, y)\ne X_2(x, y)$,
$$Z_\Sigma(x, y)=\left(\frac{Y_2(x, y)X_1(x, y)-X_2(x, y)Y_1(x, y)}{Y_2(x, y)-X_2(x, y)},~0\right)$$
by \cite{AFF, YAK}, while if $Y_2(x, y)=X_2(x, y)=0$, namely $(x, y)$ is a {\it singular sliding point} (see \cite{YAK}),
we always assume $Z_\Sigma(x, y)=(0, 0)$
in this paper. Sometimes, $Z_\Sigma$ restricted on $\Sigma^s$, denoted by $Z^s$, is called the {\it sliding vector field} of $Z$ and the corresponding equilibria are said to be {\it pseudoequilibria}. Having the definition of $Z_\Sigma$, the flow of $Z$ can be obtained by concatenating the flows of $X, Y$ and $Z_\Sigma$ as stated in \cite{YAK}.

In the switching line $\Sigma$, the boundary $\partial\Sigma^s$ of $\Sigma^s$ plays an important role in the dynamical analysis of piecewise smooth vector fields. Let $q\in\partial\Sigma^s$. If $X_2(q)=0, X(q)\ne0$ (resp. $Y_2(q)=0, Y(q)\ne0$), then $q$ is called a {\it tangency point} of $X$ (resp. $Y$), see \cite{YAK}. In addition, a tangency point $q$ of $X$ is called a {\it fold point} if $X_1(q)X_{2x}(q)\ne0$ and it is said to be {\it visible} (resp. {\it invisible}) when $X_1(q)X_{2x}(q)>0$ (resp. $X_1(q)X_{2x}(q)<0$). The above notions can be similarly defined for $Y$.
If $q$ is a fold point of both $X$ and $Y$, we call it a {\it fold-fold point} of $Z$, which can be divided into visible-visible, invisible-invisible and visible-invisible types. If $X(q)=0$ (resp. $Y(q)=0$), $q$ is called a {\it boundary equilibrium} of $X$ (resp. $Y$). Clearly,
a boundary equilibrium must be a pseudoequilibrium.

Regarding piecewise smooth vector fields, there are two types of equivalences, i.e., topological equivalence and $\Sigma$-equivalence. We adopt the latter in this paper as it was indicated in Section 1, see \cite[Definition 2.20]{MG1} and \cite[Definition 2.30]{MD} for the definition of $\Sigma$-equivalence.
Since we deal with the local dynamics of $Z\in\Omega_0$ near the origin, namely the non-smooth equilibrium, we can localize the definition of the $\Sigma$-equivalence as follows.
\begin{df}
Consider two piecewise smooth vector fields $Z_1$ and $Z_2$ in $\Omega_0$. We say that $Z_1$ and $Z_2$ are {\rm locally $\Sigma$-equivalent} near the origin if
\vspace{-13pt}
\begin{description}
\setlength{\itemsep}{-0.8mm}
\item{\rm(1)} $Z_1$ and $Z_2$ are locally topologically equivalent near the origin, i.e., there exist two neighborhoods $U$ and $V$ of the origin, and a homeomorphism $H: U\rightarrow V$ such that $H$ maps the orbits of $Z_1$ in $U$ onto the orbits of $Z_2$ in $V$, preserving the direction of time; and
\item{\rm(2)} the homeomorphism $H$ sends $\Sigma\cap U$ to $\Sigma\cap V$.
\end{description}
\label{signaequil}
\end{df}
\vspace{-13pt}
As a result, the definition of local $\Sigma$-equivalence gives rise to the definition of {\it local $\Sigma$-structural stability} of $Z\in\Omega_0$
with respect to $\Omega_0$ near the origin, that is, $Z\in\Omega_0$ is said to be locally $\Sigma$-structurally stable with respect to $\Omega_0$ near the origin, if any vector field that lies in a sufficiently small neighborhood of $Z$ contained in $\Omega_0$ is locally $\Sigma$-equivalent to $Z$ near the origin.

\subsection{Pseudo-Hopf bifurcation}

It is well known that the Hopf bifurcation of smooth vector fields is a main tool to produce limit cycles, where limit cycles bifurcate from a weak focus as the stability of this focus changes. In piecewise smooth vector fields there exists a similar phenomenon, called {\it pseudo-Hopf bifurcation} (see, e.g., \cite{HZ, RLS, MG1, CNTT, JCJLV}), where limit cycles are created from a {\it pseudo-focus} as the stability of a sliding segment changes, see Figure~\ref{pseudohopfbifurcation}. Here a point in the switching line is said to be a stable (resp. unstable) pseudo-focus if all orbits near this point turn around and tend to it as the time increases (resp. decreases) as defined in \cite{CGP}.
In order to prove the results of this paper conveniently, we adopt the version given in \cite[Proposition 2.3]{CNTT} by considering
the special one-parametric piecewise smooth vector field
\begin{eqnarray}
Z_\delta(x, y)=\left\{
\begin{aligned}
&X(x, y)~~~~~&&{\rm if}~~y>0,\\
&Y(x, y)+(0, \delta)^\top~~~~~~~&&{\rm if}~~y<0,
\end{aligned}
\right.
\label{ejeeewer}
\end{eqnarray}
where $X=(X_1, X_2)$ and $Y=(Y_1, Y_2)$ are $\mathcal{C}^1$ vector fields defined on $\mathbb{R}^2$, $\delta\in\mathbb{R}$ is a parameter.
\begin{figure}[htp]
  \begin{minipage}[t]{0.33\linewidth}
  \centering
  \includegraphics[width=1.7in]{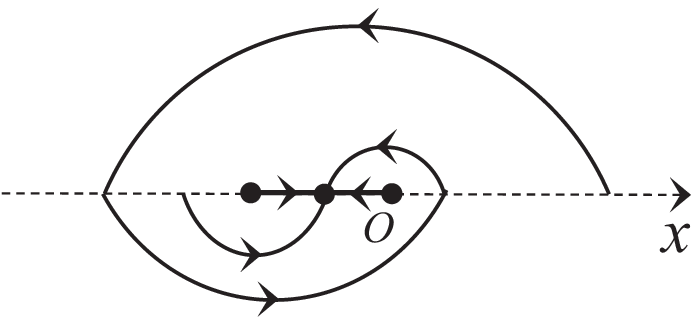}
  \caption*{{\small $\delta>0$}}
  \end{minipage}
  \begin{minipage}[t]{0.33\linewidth}
  \centering
  \includegraphics[width=1.7in]{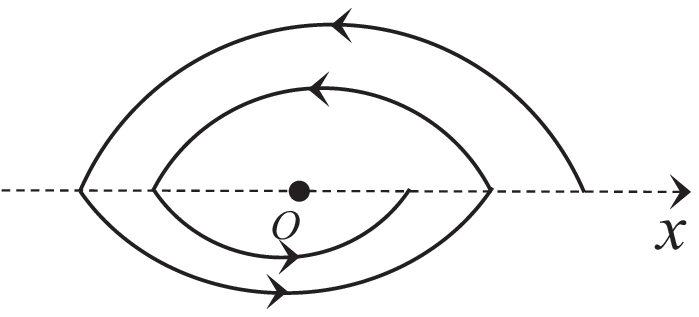}
  \caption*{{\small $\delta=0$}}
  \end{minipage}
  \begin{minipage}[t]{0.33\linewidth}
  \centering
  \includegraphics[width=1.7in]{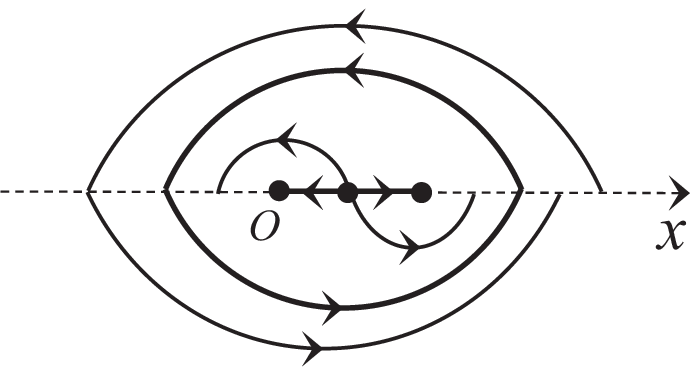}
  \caption*{{\small $\delta<0$}}
  \end{minipage}
\caption{{\small The pseudo-Hopf bifurcation of (\ref{ejeeewer}) satisfying $X_1(0, 0)<0<Y_1(0, 0)$ and the origin is stable.}}
\label{pseudohopfbifurcation}
\end{figure}

\begin{prop}
For $\delta=0$ we assume that the origin is a stable {\rm(}resp. unstable{\rm)} pseudo-focus formed by an invisible-invisible fold-fold point of the piecewise smooth vector field $Z_\delta$ and $X_1(0, 0)<0<Y_1(0, 0)$. Then the vector field $Z_\delta$ exhibits a pseudo-Hopf bifurcation at $\delta=0$ for $|\delta|$ sufficiently small, more precisely, there exists some $\delta_0>0$ such that $Z_\delta$ has a stable {\rm(}resp. unstable{\rm)} crossing limit cycle bifurcating from the origin for $-\delta_0<\delta<0$ {\rm(}resp. $\delta_0>\delta>0${\rm)} and has no crossing limit cycles for $\delta_0>\delta>0$ {\rm(}resp. $-\delta_0<\delta<0${\rm)}.
\label{pseudohopf}
\end{prop}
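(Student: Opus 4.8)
The plan is to reduce the existence and number of crossing limit cycles of $Z_\delta$ to the zeros of a one–dimensional displacement map on the switching line $\Sigma$, and then to read off both the bifurcation and the hyperbolicity from the sign of this map. Since for $\delta=0$ the origin is an invisible–invisible fold–fold point and $X_1(0,0)<0<Y_1(0,0)$, the $X$–flow in $\overline{\Sigma^+}$ and the $(Y+(0,\delta)^\top)$–flow in $\overline{\Sigma^-}$ each send a point of $\Sigma$ near $O$ back to $\Sigma$. Writing $a>0$ for the abscissa of the point where a crossing orbit meets $\Sigma$ with $X_2>0$, I would define the upper half–return map $a\mapsto\varphi^+(a)<0$ via the flow of $X$ and the lower half–return map $\varphi^-_\delta$ via the flow of $Y+(0,\delta)^\top$, and set $D(a,\delta)=\varphi^-_\delta(\varphi^+(a))-a$. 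A crossing limit cycle of $Z_\delta$ surrounding $O$ corresponds exactly to a zero $a>0$ of $D(\cdot,\delta)$ lying outside the sliding segment, and such a cycle is hyperbolic precisely when the zero is simple; its stability is governed by the sign of $\partial_a D$ there.

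Next I would analyse $D$ at $\delta=0$. The invisibility of both folds together with $X_1(0,0)<0<Y_1(0,0)$ gives $X_{2x}(0,0)>0$ and $Y_{2x}(0,0)>0$, so that $\Sigma\setminus\{O\}$ consists entirely of crossing points at $\delta=0$; hence $\varphi^+$ and $\varphi^-_0$ are genuine half–return maps and $O$ is a pseudo–focus for which $D(a,0)$ has one strict sign on $0<a<\varepsilon$. By the definition of a stable (resp. unstable) pseudo–focus recalled before Proposition~\ref{pseudohopf}, $D(a,0)<0$ (resp. $>0$); this is the analogue of a nonvanishing focal quantity, and it persists, i.e.\ $D(\varepsilon,\delta)$ keeps this sign for $|\delta|$ small.

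Then I would track the effect of the translation $(0,\delta)^\top$. The map $\varphi^+$ is independent of $\delta$, while the lower fold of $Y+(0,\delta)^\top$ sits at $x^\ast_Y=-\delta/Y_{2x}(0,0)$, so for $\delta>0$ an attracting sliding segment of length $O(|\delta|)$ opens on $\{x<0\}$ and for $\delta<0$ a repelling one opens on $\{x>0\}$ (consistent with Figure~\ref{pseudohopfbifurcation}). I would show $\varphi^-_\delta$, and hence $D(a,\delta)$, is $\mathcal C^1$ and strictly monotone in $\delta$, with $\partial_\delta D$ of a fixed nonzero sign; equivalently, as $a$ approaches the inner boundary $x^\ast_Y$ the sliding segment forces $D$ to take the sign of $\delta$, opposite to the pseudo–focus sign when $\delta$ lies on the relevant side. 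Combining this inner–boundary sign with the persistent outer sign of $D(\varepsilon,\delta)$ via the intermediate value theorem yields, for $\delta$ on exactly one side of $0$, a zero $a_\ast(\delta)$ with $a_\ast(\delta)\to0$ as $\delta\to0$; monotonicity of $D$ in $\delta$ together with $\partial_a D(a_\ast,\delta)\neq0$ gives that this zero is simple and unique, hence a unique hyperbolic crossing limit cycle of the stated stability, while on the other side $D$ keeps the pseudo–focus sign throughout and no crossing cycle exists. Since the cycle amplitude tends to $0$ strictly more slowly than the length $O(|\delta|)$ of the sliding segment, the cycle encloses that segment and meets $\Sigma$ only at crossing points, so it genuinely bifurcates from $O$.

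The main obstacle I anticipate is the regularity and uniform control of the fold half–return maps: near an invisible fold the return map carries a square–root type singularity in the raw coordinate $x$, so proving that $D(a,\delta)$ is $\mathcal C^1$ jointly in $(a,\delta)$ down to the fold, and extracting the correct sign and order of both the unperturbed focal term and of $\partial_\delta D$, requires a standard fold rectification together with a careful expansion of the composed map. Once this regularity and the two sign computations are secured, the existence/uniqueness/hyperbolicity dichotomy follows quickly from the intermediate value theorem and monotonicity, as above. Alternatively, since the statement is exactly the version established in \cite[Proposition 2.3]{CNTT}, the argument may instead be organised so as to quote that reference after checking that the hypotheses $X_1(0,0)<0<Y_1(0,0)$ and the invisible–invisible pseudo–focus condition are met.
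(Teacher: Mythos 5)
Your route is genuinely different from the paper's, which disposes of Proposition~\ref{pseudohopf} in one line by invoking the generalized Poincar\'e--Bendixson theorem for planar nonsmooth vector fields \cite{CTEE}: for $\delta$ on one side of $0$ the translated lower fold opens a repelling sliding segment while the pseudo-focus stability still pushes orbits inward at a fixed outer distance, giving a positively invariant annulus free of equilibria and sliding points, hence a stable crossing periodic orbit; on the other side the sliding segment is attracting and absorbs every nearby orbit, so no crossing cycle exists. Your displacement-map construction is the return-map incarnation of exactly this trapping argument: the two sign conditions you extract ($D<0$ at a fixed outer abscissa by persistence of the pseudo-focus sign, $D>0$ near the inner boundary created by the sliding segment) are precisely the statements that orbits enter the annulus through both boundary components, with the intermediate value theorem playing the role of Poincar\'e--Bendixson. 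What your version buys is self-containedness and quantitative control of the cycle (your remark that the amplitude $a_*(\delta)$ satisfies $|\delta|=o(a_*(\delta))$, so the cycle genuinely encloses the sliding segment and meets $\Sigma$ only at crossing points, is left implicit in the paper); what it costs is the $\mathcal C^1$-regularity analysis of the half-return maps at the invisible folds, which you correctly flag as the technical crux and which is exactly what the citation to \cite{CTEE} (or your fallback, \cite[Proposition 2.3]{CNTT}, which is where the paper takes the statement from) outsources.

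Two concrete corrections. First, a sign slip: near the inner boundary $D$ takes the sign of $-\delta$, not of $\delta$. Indeed, invisibility of both folds together with $X_1(0,0)<0<Y_1(0,0)$ forces $X_{2x}(0,0)>0$, $Y_{2x}(0,0)>0$, orbits turn counterclockwise, the shifted lower fold sits at $x^*\approx-\delta/Y_{2x}(0,0)$, and the lower half-return map is to leading order the reflection about $x^*$, so $D(a,\delta)\approx -2\delta/Y_{2x}(0,0)+D(a,0)$ near the inner boundary; this is positive exactly when $\delta<0$, consistent with the stable cycle appearing for $-\delta_0<\delta<0$. Your parenthetical ``opposite to the pseudo-focus sign'' is the correct operative statement, so the slip is harmless, but fix it. Second, and more substantively: your claims of uniqueness and hyperbolicity of the bifurcating cycle are not justified and are not provable under the hypotheses of the proposition, which assume only $\mathcal C^1$ data and a qualitative pseudo-focus condition with no focal estimate. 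Monotonicity of $D$ in $\delta$ says nothing about simplicity of a zero in $a$; the assertion $\partial_a D(a_*,\delta)\ne0$ is never derived and can fail, since $D(\cdot,0)$ may oscillate in magnitude while remaining negative, producing several zeros, some non-simple. Note the proposition claims neither uniqueness nor hyperbolicity, so you should drop these claims; for the asserted stability you cannot take an arbitrary IVT zero, but should instead work with the monotone increasing Poincar\'e map $P=\mathrm{id}+D$ on the annulus (or quote the Poincar\'e--Bendixson argument the paper cites), selecting an extremal periodic orbit so that the trapping signs on both boundary components yield the attraction stated in the proposition.
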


The proof of Proposition~\ref{pseudohopf} follows directly from the generalized Poincar\'e-Bendixson Theorem for piecewise smooth vector fields, see \cite{CTEE}.

\section{Proof of Theorem~\ref{normalform}}
\setcounter{equation}{0}
\setcounter{lm}{0}
\setcounter{thm}{0}
\setcounter{rmk}{0}
\setcounter{df}{0}
\setcounter{cor}{0}

This section is devoted to proving Theorem~\ref{normalform}. Let $Z=(X, Y)\in\Omega_0$. We start by studying the local sliding dynamics of $Z$ near the origin $O$.
\begin{lm}
For $Z=(X, Y)\in\Omega_0$ there exists a neighborhood ${\mathcal U}_0\subset {\mathcal U}$ of $O$ such that $\Sigma\cap {\mathcal U}_0$ is separated into two crossing sets by $O$. In addition, if $X_{2x}(0, 0)>0$ and $Y_{2x}(0, 0)>0$, the direction of $X$ and $Y$ on the right {\rm(}resp. left{\rm)} crossing set is upward {\rm(}resp. downward{\rm)}, while if $X_{2x}(0, 0)<0$ and $Y_{2x}(0, 0)<0$, the direction of $X$ and $Y$ on the right {\rm(}resp. left{\rm)} crossing set is downward {\rm(}resp. upward{\rm)}.
\label{slidy}
\end{lm}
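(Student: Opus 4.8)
The plan is to reduce the whole statement to a one-variable sign analysis of the second components $X_2$ and $Y_2$ restricted to $\Sigma$. First I would record what membership in $\Omega_0$ gives us on $\Sigma$: the equilibrium part of (\ref{adc}) yields $X_2(0,0)=Y_2(0,0)=0$, while hypothesis (\ref{condi}) forces $X_{2x}(0,0)$ and $Y_{2x}(0,0)$ to be nonzero and to share the same sign. Everything in the lemma is then controlled by how $X_2(x,0)$ and $Y_2(x,0)$ change sign as $x$ crosses $0$.

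Next I would introduce the auxiliary $\mathcal{C}^1$ functions $g(x):=X_2(x,0)$ and $h(x):=Y_2(x,0)$, which satisfy $g(0)=h(0)=0$, $g'(0)=X_{2x}(0,0)$ and $h'(0)=Y_{2x}(0,0)$. Since $X,Y\in\mathfrak{X}$ are $\mathcal{C}^1$, the derivatives $g'$ and $h'$ are continuous, so there is an $\epsilon>0$ on which $g'$ and $h'$ retain the signs of their values at $0$. Hence $g$ and $h$ are strictly monotone on $(-\epsilon,\epsilon)$, and because $g(0)=h(0)=0$ the signs of $g(x)$ and $h(x)$ for $0<|x|<\epsilon$ coincide with those of $X_{2x}(0,0)\,x$ and $Y_{2x}(0,0)\,x$ respectively. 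Taking $\mathcal{U}_0$ to be any disk about $O$ of radius smaller than $\epsilon$ guarantees $\Sigma\cap\mathcal{U}_0\subset(-\epsilon,\epsilon)\times\{0\}$.

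The crossing/sliding dichotomy now follows from the product $g(x)h(x)=X_2(x,0)Y_2(x,0)$. As $X_{2x}(0,0)$ and $Y_{2x}(0,0)$ have a common sign, this product is strictly positive for $0<|x|<\epsilon$, so every such $(x,0)$ lies in $\Sigma^c$; at $x=0$ both factors vanish, making $O$ the unique point (a singular sliding point) that separates the two crossing arcs $\{(x,0):0<x<\epsilon\}$ and $\{(x,0):-\epsilon<x<0\}$. This proves that $\Sigma\cap\mathcal{U}_0$ is split by $O$ into two crossing sets. The direction assertion is then read straight off the sign of the second components: when $X_{2x}(0,0)>0$ and $Y_{2x}(0,0)>0$ we have $X_2(x,0),Y_2(x,0)>0$ on the right arc and $<0$ on the left arc, so $X$ and $Y$ point upward on the right and downward on the left; the inequalities, hence the directions, simply reverse when both derivatives are negative.

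The only mildly delicate point is the monotonicity step, namely producing a single neighborhood $(-\epsilon,\epsilon)$ on which the linear terms of $g$ and $h$ govern their signs simultaneously; but this is immediate from the $\mathcal{C}^1$-continuity of $g'$ and $h'$ together with the nonvanishing of $X_{2x}(0,0)$ and $Y_{2x}(0,0)$ guaranteed by (\ref{condi}), and the remainder is bookkeeping of signs. I therefore do not anticipate a genuine obstacle here.
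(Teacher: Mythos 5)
Your proof is correct and takes essentially the same route as the paper's: both reduce the lemma to a sign analysis of $X_2(x,0)$ and $Y_2(x,0)$ near $x=0$, using (\ref{adc}) and (\ref{condi}) to conclude that the product $X_2(x,0)Y_2(x,0)$ is strictly positive for small $x\ne0$ and vanishes only at $O$, and then reading the flow directions off the individual signs. The only difference is cosmetic: the paper writes $X_2(x,0)=X_{2x}(0,0)x+\mathcal{O}(x^2)$, whereas you obtain the signs from monotonicity via continuity of the derivatives, which is if anything slightly more faithful to the mere $\mathcal{C}^1$ hypothesis on $X$ and $Y$.
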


\begin{proof}
Writing $X_2(x, 0)$ and $Y_2(x, 0)$ around $x=0$ as
\begin{eqnarray}
X_2(x, 0)=X_{2x}(0, 0)x+\mathcal{O}(x^2),~~~~~~~~Y_2(x, 0)=Y_{2x}(0, 0)x+\mathcal{O}(x^2),
\label{snajfaf}
\end{eqnarray}
we get $X_2(x, 0)Y_2(x, 0)=X_{2x}(0, 0)Y_{2x}(0, 0)x^2+\mathcal{O}(x^3)$.
By the definition of $\Omega_0$, we get $X_{2x}(0, 0)Y_{2x}(0, 0)>0$ and then there exists a neighborhood ${\mathcal U}_0\subset {\mathcal U}$ of $O$ such that $X_2(x, 0)Y_2(x, 0)=0$ for $(x, 0)=O$ and $X_2(x, 0)Y_2(x, 0)>0$ for $(x, 0)\in ({\mathcal U}_0\cap\Sigma)\setminus\{O\}$.
It follows from the definition of crossing set that
$\{(x, 0)\in {\mathcal U}_0\cap\Sigma: x<0\}$ and $\{(x, 0)\in {\mathcal U}_0\cap\Sigma: x>0\}$ are two crossing sets separated by $O$, i.e., the first part of Lemma~\ref{slidy} is proved. The second part is obtained directly from (\ref{snajfaf}).
\end{proof}

Our main idea for proving Theorem~\ref{normalform} is to provide a normal form for $Z\in\Omega_1\subset\Omega_0$ such that
both $Z$ and the corresponding piecewise linear vector field $Z_L$ are locally $\Sigma$-equivalent to
this normal form near the origin. Then $Z$ is locally $\Sigma$-equivalent to $Z_L$ near the origin, and the local phase portrait of $Z$ is the phase portrait of this normal form in the sense of $\Sigma$-equivalence. This concludes the proof of Theorem~\ref{normalform}.
Therefore, in what follows we will study the normal forms of $Z\in\Omega_1$ using the method introduced in \cite{MG1, TCJ, TCJLC}. Such a method has been successfully applied to obtain the normal forms of piecewise smooth vector fields in $\Omega$ near a codimension-zero (resp. codimension-one) singularity in \cite{MG1} (resp. \cite{TCJ, TCJLC}), and near a $\Sigma$-center in \cite{CT, LXZ}.

To this end we classify $\Omega_1$ into the following six subsets:
\vspace{-13pt}
\begin{description}
\setlength{\itemsep}{-0.8mm}
\item{}$\Omega_{ff}=\{Z\in\Omega_1: \lambda^\pm_1,\lambda^\pm_2 \in \mathbb{C}\setminus \mathbb{R}\}$,
\item{}$\Omega_{fn}=\{Z\in\Omega_1: {\rm either}~\lambda^+_1,\lambda^+_2\in \mathbb{C}\setminus \mathbb{R}, \lambda^-_1, \lambda^-_2\in \mathbb{R}, \lambda^-_1 \lambda^-_2>0~ {\rm or}~ \lambda^-_1,\lambda^-_2\in \mathbb{C}\setminus \mathbb{R}, \lambda^+_1, \lambda^+_2\in \mathbb{R}, \lambda^+_1 \lambda^+_2>0\}$,
\item{}$\Omega_{fs}=\{Z\in\Omega_1:{\rm either}~\lambda^+_1,\lambda^+_2\in \mathbb{C}\setminus \mathbb{R}, \lambda^-_1, \lambda^-_2\in \mathbb{R}, \lambda^-_1 \lambda^-_2<0~ {\rm or}~ \lambda^-_1,\lambda^-_2\in \mathbb{C}\setminus \mathbb{R}, \lambda^+_1, \lambda^+_2\in \mathbb{R}, \lambda^+_1 \lambda^+_2<0\}$,
\item{}$\Omega_{nn}=\{Z\in\Omega_1: \lambda^\pm_1,\lambda^\pm_2\in\mathbb{R}, \lambda^+_1\lambda^+_2>0, \lambda^-_1\lambda^-_2>0\}$,
\item{}$\Omega_{ns}=\{Z\in\Omega_1: \lambda^\pm_1, \lambda^\pm_2\in\mathbb{R}, {\rm either}~\lambda^+_1\lambda^+_2>0, \lambda^-_1\lambda^-_2<0 ~{\rm or}~\lambda^+_1\lambda^+_2<0, \lambda^-_1\lambda^-_2>0\}$,
\item{}$\Omega_{ss}=\{Z\in\Omega_1: \lambda^\pm_1,\lambda^\pm_2\in\mathbb{R}, \lambda^+_1\lambda^+_2<0, \lambda^-_1\lambda^-_2<0\}$.
\end{description}
\vspace{-13pt}
Clearly,
$$
\Omega_1=\Omega_{ff}\cup\Omega_{fn}\cup\Omega_{fs}\cup\Omega_{nn}\cup\Omega_{ns}\cup\Omega_{ss}.
$$
Now we study the normal forms for $Z=(X, Y)\in\Omega_{ff}, \Omega_{fn}, \Omega_{fs}, \Omega_{nn}, \Omega_{ns}$ and $\Omega_{ss}$, respectively.

\begin{lm}
If $Z=(X, Y)\in\Omega_{ff}$, then $Z$ is locally $\Sigma$-equivalent to $Z_{ff}=(X_{ff}, Y_{ff})\in\Omega_{ff}$ near the origin,
where
$$X_{ff}(x, y)=(\alpha x-y, x+\alpha y),\qquad Y_{ff}(x, y)=(\alpha x-y, x+\alpha y),$$
$\alpha={\rm sign}\ell$ and $\ell\ne0$ is defined in {\rm(\ref{eigen2})}.
\label{ff}
\end{lm}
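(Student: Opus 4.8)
The plan is to reduce the construction of a local $\Sigma$-equivalence to the analysis of a one-dimensional first-return map on the switching line, and then to propagate the resulting conjugacy into $\Sigma^\pm$ along the flows of $X$ and $Y$. First I would invoke Lemma~\ref{slidy}: on a small neighborhood ${\mathcal U}_0$ the set $\Sigma\cap{\mathcal U}_0$ splits into the two crossing branches $\Sigma_r=\{(x,0):x>0\}$ and $\Sigma_\ell=\{(x,0):x<0\}$ with prescribed crossing directions. After composing, if necessary, with the reflection $(x,y)\mapsto(-x,y)$ — which preserves $\Sigma$, fixes $\Sigma^\pm$ as sets, interchanges the two branches, and turns a clockwise circulation into a counterclockwise one — I may assume $X_{2x}(0,0),Y_{2x}(0,0)>0$, so that the combined flow circulates counterclockwise, exactly as the normal form $Z_{ff}$ (for which $\partial_x X_{ff,2}(0,0)=1>0$) does. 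Since $Z\in\Omega_{ff}$, the matrices $A^\pm$ have non-real eigenvalues, so $O$ is a focus of $X$ in $\Sigma^+$ and a focus of $Y$ in $\Sigma^-$. Hence an orbit entering $\Sigma^+$ through $\Sigma_r$ spirals and leaves through $\Sigma_\ell$, giving a transition map $\psi_X:\Sigma_r\to\Sigma_\ell$, and likewise $Y$ gives $\psi_Y:\Sigma_\ell\to\Sigma_r$; their composition $\psi=\psi_Y\circ\psi_X$ is the first-return map of $Z$ on $\Sigma_r$, fixing $O$.

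The next step is to read off the type of $O$ as a pseudo-focus from $\ell$. Writing each focus in its real canonical coordinates, a half-turn through angle $\pi$ multiplies the radius by $\exp(\pi\,{\rm Re}\lambda^+_1/|{\rm Im}\lambda^+_1|)$ for $X$ and by $\exp(\pi\,{\rm Re}\lambda^-_1/|{\rm Im}\lambda^-_1|)$ for $Y$, since along a focus $d(\ln r)/d\theta$ equals the ratio of the real to the modulus of the imaginary part of the eigenvalue. Thus the asymptotic contraction/expansion rate of $\psi$ at $O$ is governed by $\exp(\pi\ell)$ with $\ell$ as in {\rm(\ref{eigen2})}, so $O$ is a stable pseudo-focus when $\ell<0$ and an unstable one when $\ell>0$. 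The same computation for the smooth linear focus $Z_{ff}$ yields return multiplier $\exp(2\pi\alpha)$ with $\alpha={\rm sign}\,\ell$; the logarithm has the same sign, and since only this sign is a topological invariant, $\psi$ and its $Z_{ff}$-analogue $\psi^{ff}=\psi_Y^{ff}\circ\psi_X^{ff}$ are of the same qualitative type.

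It then remains to assemble the homeomorphism $H$. On $\Sigma_r$ I would take $H$ to be any orientation-preserving homeomorphism fixing $O$ that conjugates $\psi$ to $\psi^{ff}$, which exists because two monotone self-maps of a half-line fixing the endpoint that are both contractions (or both expansions) are topologically conjugate. I then \emph{define} $H$ on $\Sigma_\ell$ by $H|_{\Sigma_\ell}=\psi_X^{ff}\circ H|_{\Sigma_r}\circ\psi_X^{-1}$, so that the upper transition is intertwined by construction; using that $H|_{\Sigma_r}$ conjugates the full return map, a short computation shows that $\psi_Y$ is then automatically intertwined with $\psi_Y^{ff}$ as well. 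Next I extend $H$ into $\Sigma^+$ by the flow: each $p\in\Sigma^+$ near $O$ equals $\varphi^{X}_t(q)$ for a unique $q\in\Sigma_r$ and $t\in[0,\tau_X(q)]$, and I set $H(p)=\varphi^{X_{ff}}_{s}(H(q))$ where $s$ rescales $[0,\tau_X(q)]$ onto $[0,\tau_{X_{ff}}(H(q))]$; the analogous flow-box construction on $\Sigma^-$ uses $Y$ and $Y_{ff}$. By construction $H$ maps orbits to orbits preserving the direction of time and sends $\Sigma$ to $\Sigma$; moreover the two one-sided definitions agree on $\Sigma_r$ and $\Sigma_\ell$ precisely because the conjugacy on $\Sigma$ intertwines both $\psi_X$ and $\psi_Y$ with their normal-form counterparts. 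Setting $H(O)=O$ completes the map.

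The main obstacle will be the continuity of $H$ at the origin together with the low ($\mathcal{C}^1$) regularity. Because the transition times $\tau_X,\tau_Y$ stay bounded away from $0$ and $\infty$ as the base point approaches $O$, each one-sided flow-box map is a homeomorphism; the delicate point is that the transition maps need not be differentiable at $O$ in the merely $\mathcal{C}^1$ setting, so the spiralling estimate $\ln r\sim(\text{rate})\,\theta$ must be used with care to control $H$ on one fundamental annulus and propagate it inward under the return dynamics, thereby forcing $H$ to extend continuously to $O$. Once these continuity checks are in place, $H$ is the desired local $\Sigma$-equivalence between $Z$ and $Z_{ff}$; applying the identical argument to the linear field $Z_L\in\Omega_{ff}$ shows $Z_L$ is $\Sigma$-equivalent to $Z_{ff}$ as well, which by transitivity is what the overall proof of Theorem~\ref{normalform} needs.
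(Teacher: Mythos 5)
Your proposal is correct and follows essentially the same route as the paper's proof: reduce by the reflection $(x,y)\mapsto(-x,y)$ to the counterclockwise case, show via the return-map multiplier $e^{\pi\ell}$ (the paper cites \cite{CGP} and \cite[Theorems 1.1, 1.2]{HZ} for the expansion $\mathcal{P}_1(x_P)=e^{\ell\pi}x_P+\mathcal{O}(x_P^2)$ of the nonlinear return map, where you sketch the computation) that $O$ is a stable or unstable pseudo-focus according to ${\rm sign}\,\ell$, conjugate the two one-dimensional return maps on the half-line as contractions (the paper invokes Hartman \cite{PH1}), and extend the conjugacy into $\Sigma^{\pm}$ along the flows, checking that the gluing on the second branch of $\Sigma$ is consistent precisely because the full return maps are intertwined. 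The only differences are cosmetic: the paper parametrizes orbital arcs by arc length rather than rescaled time, and it verifies the compatibility $\left.H^-\right|_{\overline{OC}}=\left.H_0\right|_{\overline{OC}}$ by the same computation you describe as the ``automatic'' intertwining of the lower transition map.
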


\begin{proof}
Because $\Omega_{ff}\subset\Omega_1\subset\Omega_0$, $Z\in\Omega_{ff}$ satisfies (\ref{condi}) by the definition of $\Omega_0$.
Using the change $(x, y)\rightarrow(-x, y)$, we only need to consider the case
\begin{eqnarray}
X_{2x}(0, 0)>0, ~~~~~~~Y_{2x}(0, 0)>0.
\label{onecase}
\end{eqnarray}
Hence, $\Sigma\cap {\mathcal U}_0$ is separated into two crossing sets by $O$, and
the direction of $X$ and $Y$ on the right {\rm(}resp. left{\rm)} crossing set is upward {\rm(}resp. downward{\rm)}
as it is seen in Lemma~\ref{slidy}.
Recalling \cite[Theorem B]{CGP} and \cite[Theorem 1.2]{HZ}, we obtain that $O$ is a stable pseudo-focus if $\ell<0$ and an unstable pseudo-focus if $\ell>0$ for $Z\in\Omega_{ff}$ satisfying (\ref{onecase}), see Figure~\ref{ffn}. For $Z_{ff}\in\Omega_{ff}$ it is a linear vector field, and $O$ is a stable focus as shown in (FF-1) of Figure~\ref{localphaseportraits} if $\alpha=-1$ and an unstable focus as shown in (FF-2) of Figure~\ref{localphaseportraits} if $\alpha=1$.

\begin{figure}
  \begin{minipage}[t]{0.5\linewidth}
  \centering
  \includegraphics[width=1.7in]{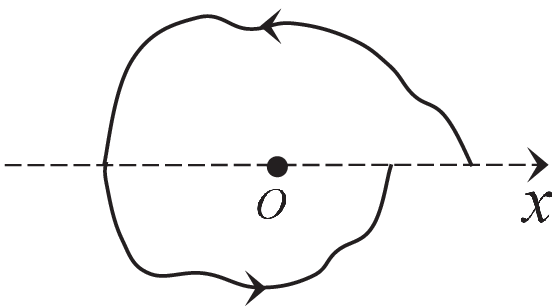}
  \caption*{(a)~ {\small $\ell<0$}}
  \end{minipage}
  \begin{minipage}[t]{0.5\linewidth}
  \centering
  \includegraphics[width=1.7in]{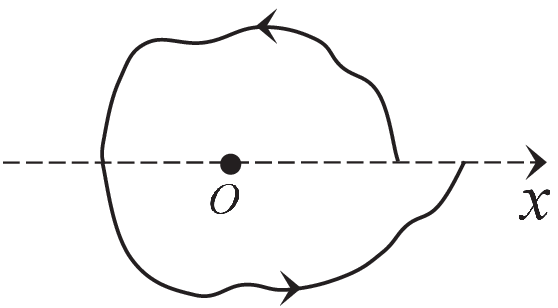}
  \caption*{(b)~ {\small $\ell>0$}}
  \end{minipage}
\caption{{\small Local phase portraits of $Z\in\Omega_{ff}$ satisfying (\ref{onecase}) near $O$.}}
\label{ffn}
\end{figure}

Next we prove this lemma for the case $\ell<0$ and $\alpha=-1$. The case $\ell>0$ and $\alpha=1$ can be treated similarly.
Consider two sufficiently small neighborhoods $U\subset {\mathcal U}_0$ and $V\subset {\mathcal U}_0$ of $O$ as shown in Figure~\ref{focusfocus}, where ${\mathcal U}_0$ is given in Lemma~\ref{slidy}, $U$ is surrounded by the closed line segment $\overline{CA}\subset\Sigma$ and the orbital arc of $Z$ from $A$ to $C$ after passing through $B$, $V$ is surrounded by the closed line segment $\overline{C_1A_1}\subset\Sigma$ and the orbital arc of $Z_{ff}$ from $A_1$ to $C_1$ after passing through $B_1$. Here overline denotes the closure. We need to construct a homeomorphism $H$ from $U$ to $V$ implying the $\Sigma$-equivalence between $Z$ with $\ell<0$ and $Z_{ff}$ with $\alpha=-1$.

\begin{figure}
  \begin{minipage}[t]{1.0\linewidth}
  \centering
  \includegraphics[width=4.8in]{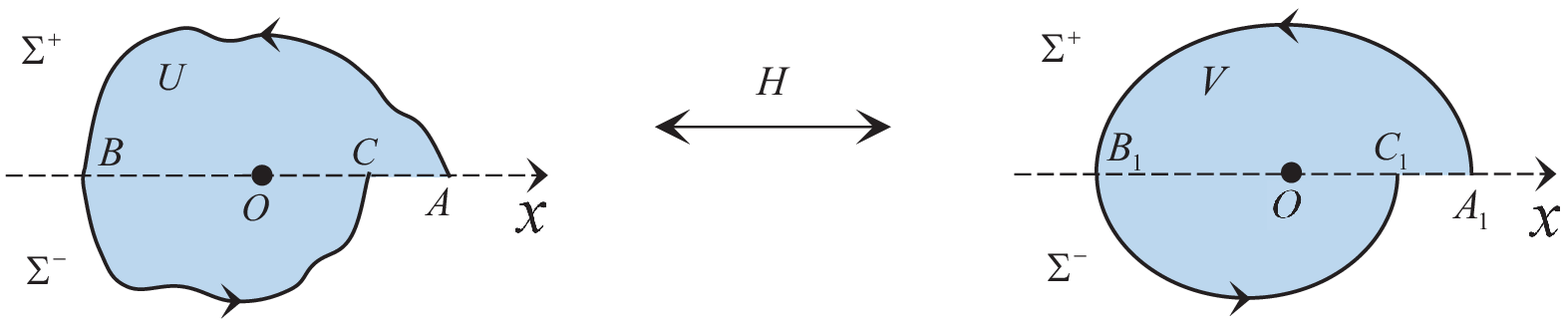}
  \end{minipage}
\caption{{\small The homeomorphism $H$ between $Z\in\Omega_{ff}$ with $\ell<0$ and $Z_{ff}$ with $\alpha=-1$.}}
\label{focusfocus}
\end{figure}

For $Z\in\Omega_{ff}$ satisfying (\ref{onecase}), $O$ is an anticlockwise rotary equilibrium of focus type of $X$ and $Y$.
Thus, given $P\in \overline{OA}$, there exist a first time $t_1=t_1(P)\ge0$ such that $\Phi^+(t_1, P)\in \overline{OB}$, and
a first time $t_2=t_2(\Phi^+(t_1, P))\ge0$ such that $\Phi^-\left(t_2, \Phi^+(t_1, P)\right)\in \overline{OC}$, where $\Phi^+$ and $\Phi^-$ denote the flows of $X$ and $Y$ respectively. This means that we can define a Poincar\'e map $\mathcal{P}: \overline{OA}\rightarrow \overline{OC}$ by
\begin{eqnarray}
\mathcal{P}(P)=\Phi^-\left(t_2, \Phi^+(t_1, P)\right).
\label{afnf}
\end{eqnarray}
In particular, $\mathcal{P}(O)=O$ and $\mathcal{P}(A)=C$, since $A$ and $C$ lie in the same orbit. Let $(x_P, 0)$ and $(\mathcal{P}_1(x_P), \mathcal{P}_2(x_P))$ be the coordinates of $P$ and $\mathcal{P}(P)$ respectively. Then $\mathcal{P}_2(x_P)=0$ and $\mathcal{P}_1(x_P)$ is given by
$$\mathcal{P}_1(x_P)=e^{\ell\pi} x_P+\mathcal{O}(x^2_P)$$
from \cite[Theorem 1.1, Theorem 1.2]{HZ}.

Similarly, denoting the flows of $X_{ff}$ and $Y_{ff}$ by $\Psi^+$ and $\Psi^-$ respectively,
we can define a Poincar\'e map $\mathcal{Q}: \overline{OA_1}\rightarrow \overline{OC_1}$ by
\begin{eqnarray}
\mathcal{Q}(P)=\Psi^-\left(s_2, \Psi^+(s_1, P)\right),
\label{dasjfn}
\end{eqnarray}
which satisfies $\mathcal{Q}(O)=O$ and $\mathcal{Q}(A_1)=C_1$, where $s_1=s_1(P)\ge0$ is the first time such that $\Psi^+(s_1, P)\in \overline{OB_1}$, and $s_2=s_2(\Psi^+(s_1, P))\ge0$ is the first time such that $\Psi^-\left(s_2, \Psi^+(s_1, P)\right)\in\overline{OC_1}$. Let $(\mathcal{Q}_1(x_P), \mathcal{Q}_2(x_P))$ be the coordinates of $\mathcal{Q}(P)$. Then $\mathcal{Q}_2(x_P)=0$ and a straightway calculation yields
$$\mathcal{Q}_1(x_P)=e^{-2\pi}x_P.$$

Since we are considering the case of $\ell<0$, according to the linearization and conjugacy theory of smooth map \cite{PH1}, $U$ and $V$ can be chosen to ensure that there exists a homeomorphism $h: [0, x_A]\rightarrow [0, x_{A_1}]$ satisfying
\begin{eqnarray}
h(0)=0, \qquad h(x_A)=x_{A_1}, \qquad h(\mathcal{P}_1(x_P))=\mathcal{Q}_1(h(x_P)),
\label{jaffse}
\end{eqnarray}
where $x_A$ and $x_{A_1}$ are the first coordinates of $A$ and $A_1$ respectively.
Consequently, we define a homeomorphism $H_0: \overline{OA}\rightarrow \overline{OA_1}$ by
\begin{eqnarray}
H_0(P)=H_0(x_P, 0)=(h(x_P), 0) \qquad {\rm for} \quad P\in \overline{OA}.
\label{ajfff}
\end{eqnarray}
Clearly, it follows from (\ref{jaffse}) that $H_0(O)=O$, $H_0(A)=A_1$ and $H_0(C)=C_1$.

Given $P\in\overline{OB}$, there exists a first time $t_3=t_3(P)\le0$ such that $\Phi^+(t_3, P)\in\overline{OA}$,
since $O$ is an anticlockwise rotary equilibrium of focus type of $X$. Then $H_0(\Phi^+(t_3, P))\in\overline{OA_1}$ and there exists a first time $s_3=s_3(H_0(\Phi^+(t_3, P)))\ge0$ such that $\Psi^+(s_3, H_0(\Phi^+(t_3, P)))\in\overline{OB_1}$ because $O$ is an anticlockwise rotary focus of $X_{ff}$. By the arc length parametrization we can identify the orbital arc of $X$ from $\Phi^+(t_3, P)$ to $P$ with the one of $X_{ff}$ from $H_0(\Phi^+(t_3, P))$ to $\Psi^+(s_3, H_0(\Phi^+(t_3, P)))$. Therefore, in this way we can define a homeomorphism $H^+: \overline{\Sigma^+\cap U}\rightarrow\overline{\Sigma^+\cap V}$ that maps $\overline{BA}$ onto $\overline{B_1A_1}$, maps the orbits of $X$ in $\overline{\Sigma^+\cap U}$ onto the orbits of $X_{ff}$ in $\overline{\Sigma^+\cap V}$ and satisfies
\begin{eqnarray}
\left.H^+\right|_{\overline{OA}}=H_0.
\label{dnjvdhf}
\end{eqnarray}

Given $P\in\overline{OC}$, there exists a first time $t_4=t_4(P)\le0$ such that $\Phi^-(t_4, P)\in\overline{OB}$. Then $H^+(\Phi^-(t_4, P))\in\overline{OB_1}$ from the definition of $H^+$, and there exists a first time $s_4=s_4(H^+(\Phi^-(t_4, P)))\ge0$ such that $\Psi^-\left(s_4, H^+(\Phi^-(t_4, P))\right)\in\overline{OC_1}$. Similarly we can identify the orbital arc of $Y$ from $\Phi^-(t_4, P)$ to $P$ with the one of $Y_{ff}$ from $H^+(\Phi^-(t_4, P))$ to $\Psi^-\left(s_4, H^+(\Phi^-(t_4, P))\right)$, and thus define a homeomorphism $H^-: \overline{\Sigma^-\cap U}\rightarrow\overline{\Sigma^-\cap V}$ that maps $\overline{BC}$ onto $\overline{B_1C_1}$, maps the orbits of $Y$ in $\overline{\Sigma^-\cap U}$ onto the orbits of $Y_{ff}$ in $\overline{\Sigma^-\cap V}$ and satisfies
\begin{eqnarray}
\left.H^-\right|_{\overline{OB}}=\left.H^+\right|_{\overline{OB}}.
\label{dnjvdhwewf}
\end{eqnarray}
Moreover, for any $P\in\overline{OC}$ we have
$$
\begin{aligned}
H^-(P)&=\Psi^-\left(s_4, H^+(\Phi^-(t_4, P))\right)=\Psi^-\left(s_4, \Psi^+(s_3, H_0(\Phi^+(t_3, \Phi^-(t_4, P))))\right)\\
&=\mathcal{Q}(H_0(\Phi^+(t_3, \Phi^-(t_4, P))))=H_0(\mathcal{P}(\Phi^+(t_3, \Phi^-(t_4, P))))\\
&=H_0(P)
\end{aligned}
$$
by (\ref{afnf}), (\ref{dasjfn}), (\ref{jaffse}), (\ref{ajfff}) and the constructions of $H^\pm$. This implies that
\begin{eqnarray}
\left.H^-\right|_{\overline{OC}}=\left.H_0\right|_{\overline{OC}}.
\label{dnejnfew}
\end{eqnarray}

Let
\begin{eqnarray}
H(P)=\left\{
\begin{aligned}
&H^+(P) \qquad &&{\rm for}\quad P\in(\Sigma^+\cup\Sigma)\cap U,\\
&H^-(P) \qquad &&{\rm for}\quad P\in(\Sigma^-\cup\Sigma)\cap U.
\end{aligned}
\right.
\label{asjfnjfec}
\end{eqnarray}
Then $H$ is a homeomorphism from $U$ to $V$ because $H^\pm$ are homeomorphisms in their domains and $\left.H^+\right|_{\overline{BC}}=\left.H^-\right|_{\overline{BC}}$ by (\ref{dnjvdhf}), (\ref{dnjvdhwewf}) and (\ref{dnejnfew}). Furthermore, the construction of $H$ ensures that $H$ maps the orbits of $Z\in\Omega_{ff}$ with $\ell<0$ in $U$ onto the orbits of $Z_{ff}$ with $\alpha=-1$ in $V$, preserving the direction of time and the switching line $\Sigma$. We eventually conclude that $Z\in\Omega_{ff}$ with $\ell<0$ and $Z_{ff}$ with $\alpha=-1$ are locally $\Sigma$-equivalent near $O$.
\end{proof}

\begin{lm}
If $Z=(X, Y)\in\Omega_{fn}$, then $Z$ is locally $\Sigma$-equivalent to $Z_{fn}=(X_{fn}, Y_{fn})\in\Omega_{fn}$ near the origin, where
$$X_{fn}(x, y)=(-y, x), \qquad Y_{fn}(x, y)=(2\beta x+y, x+2\beta y)$$
and
$$\beta=
\left\{
\begin{aligned}
&{\rm sign}(\lambda^-_1+\lambda^-_2) \qquad {\rm when}~ \lambda^-_1,\lambda^-_2\in\mathbb{R},\\
&{\rm sign}(\lambda^+_1+\lambda^+_2) \qquad {\rm when}~ \lambda^+_1,\lambda^+_2\in\mathbb{R}.
\end{aligned}
\right.
$$
\label{fn}
\end{lm}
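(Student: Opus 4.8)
The plan is to follow the template of the proof of Lemma~\ref{ff}: normalize the configuration, identify the common qualitative local phase portrait of $Z$ and $Z_{fn}$, and then build a $\Sigma$-preserving homeomorphism by transporting a boundary conjugacy along the flows in each half-plane. Throughout I may assume, after the change $(x,y)\mapsto(-x,y)$ and, up to the reflection $(x,y)\mapsto(x,-y)$ that interchanges the two half-planes, that $X$ is the focus, $Y$ the node, and $X_{2x}(0,0)>0,\ Y_{2x}(0,0)>0$, so that Lemma~\ref{slidy} applies: $\Sigma\cap\mathcal{U}_0$ splits into a right crossing set on which the field points upward and a left crossing set on which it points downward. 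I would also record that for $Z\in\Omega_{fn}$ one has ${\rm Im}\lambda^+_1{\rm Im}\lambda^-_1=0$, hence $\ell=1\ne0$ by (\ref{eigen2}); thus $\ell$ plays no discriminating role here and the normal form depends only on $\beta$, the sign of the trace of the real (node) block. Finally, condition (\ref{condi}) forces $Y_{2x}(0,0)\ne0$, which rules out a horizontal eigendirection of the node, so both node separatrices are transversal to $\Sigma$ and meet it only at $O$.

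The second step is to pin down the local phase portrait and show it is governed by $\beta$ alone. The key qualitative fact, in contrast with the focus-focus case, is that there is in general no finite Poincar\'e return map: the focus contributes only a bounded half-turn in the upper half-plane, after which every nearby orbit crosses the left crossing set and enters the node. If the node is stable ($\beta=-1$) it absorbs the orbit to $O$ along the slow eigendirection, so every nearby orbit tends to $O$ and $O$ is a stable pseudo-focus; if the node is unstable ($\beta=1$) the orbit escapes and $O$ is an unstable pseudo-focus. Because the focus acts only over a bounded arc, its real part cannot reverse this conclusion, so the stability is dictated by the sign of the node trace. This matches $Z_{fn}$, whose node block has eigenvalues $2\beta\pm1$ (of common sign $\beta$) and whose focus block $(-y,x)$ is a pure rotation; hence $Z$ and $Z_{fn}$ share the qualitative portrait, namely (FN-1) when $\beta=-1$ and (FN-2) when $\beta=1$. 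I would make the absorption claim precise by integrating the node near $O$: its orbits approach (resp.\ leave) $O$ tangent to the slow eigendirection, and the two eigendirection rays in $\{y<0\}$ partition the lower half-disc into sectors with prescribed $\alpha$- and $\omega$-behaviour.

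The third and main step is the construction of the homeomorphism $H\colon U\to V$. In the focus (upper) half I would repeat the construction of Lemma~\ref{ff} almost verbatim: fix a homeomorphism $H_0$ of the positive-$x$ segment onto its image, transport it along the focus flows $\Phi^+$ of $X$ and $\Psi^+$ of $X_{fn}$ by identifying orbital arcs via arc-length, obtaining $H^+$; since a single focus half-turn is a homeomorphism between the two $\Sigma$-rays, I may fix $H_0$ on the negative-$x$ segment by conjugating the two half-turn maps, which makes $H^+$ consistent on $\Sigma$. The new ingredient is the node (lower) half. Using the two node separatrices, and their prescribed images, as organizing curves, I would define $H^-$ sector by sector: on each sector it maps $Y$-orbits to $Y_{fn}$-orbits by transporting the boundary values $H_0$ along the node flow, sending slow eigendirection to slow and fast to fast and respecting the tangential approach to $O$. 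Gluing $H^+$ and $H^-$ along $\Sigma$, where they agree by construction, and along the separating orbits then yields a homeomorphism preserving orbits, time-direction and $\Sigma$, which proves local $\Sigma$-equivalence.

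The hard part will be the node half: because the node lacks the uniform rotation of a focus, the transit times are unbounded, the transition maps between transversal sections are only H\"older (not $\mathcal{C}^1$) at $O$, and the separatrix rays create several sectors that must be matched one-to-one with those of $Z_{fn}$. Ensuring that the sectorwise maps fit together continuously across the separatrices, extend continuously to $O$ despite the slow-manifold (tangential) approach, and glue continuously with the focus side $H^+$ along $\Sigma$, all while preserving the orientation of $\Sigma$ fixed in Lemma~\ref{slidy}, is the delicate point; the reduction and the focus side are routine.
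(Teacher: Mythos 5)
Your proposal is correct and follows essentially the same route as the paper's proof: the same reduction via the reflections $(x,y)\mapsto(-x,y)$ and $(x,y)\mapsto(x,-y)$, the same identification of the two phase portraits by the sign of the node's trace, and the same homeomorphism construction --- flow transport of an arc-length boundary identification on the focus side (with, as you correctly observe, no return-map conjugacy needed, unlike Lemma~\ref{ff}), and sector-by-sector transport on the node side using invariant manifolds as organizing curves, glued along $\Sigma$. The only cosmetic difference is that the paper splits the lower half-plane into just two regions using the strong manifold alone, never requiring slow-to-slow/fast-to-fast matching, whereas you use both separatrices; nothing essential changes.
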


\begin{proof}
By $(x, y)\rightarrow(x, -y)$ and $(x, y)\rightarrow(-x, y)$
we only need to consider $Z\in\Omega_{fn}$ satisfying (\ref{onecase}) and
$$\lambda^+_1,\lambda^+_2\in\mathbb{C}\setminus\mathbb{R}, \qquad \lambda^-_1,\lambda^-_2\in\mathbb{R}, \qquad\lambda^-_1\lambda^-_2>0.$$
In this case, $O$ is an equilibrium of focus type of $X$ and a node of $Y$ by \cite[Theorems 4.2, 4.3, 5.1]{ZZF}.
Thus, recalling the dynamics on $\Sigma$ given in Lemma~\ref{slidy}, we get two different types of the local phase portraits of $Z$ near $O$ depending on the sign of $\lambda^-_1+\lambda^-_2$, namely the stability of $O$ when it is regarded as an equilibrium of $Y$, see Figure~\ref{fnn}.
In Figure~\ref{fnn}(a), the strong unstable manifold $m^u_s$ lies in the left side of the weak unstable manifold $m^u_w$, while in Figure~\ref{fnn}(b),
the strong stable manifold $m^s_s$ lies in the right side of the weak stable manifold $m^s_w$. Here we use the assumption of $\lambda^-_1\ne\lambda^-_2$ for all vector fields in $\Omega_1$.
Regarding the vector field $Z_{fn}$, we easily verify that its phase portrait is the one either as shown in (FN-1) of Figure~\ref{localphaseportraits} if $\beta=1$, or as shown in (FN-2) of Figure~\ref{localphaseportraits} if $\beta=-1$.
\begin{figure}
  \begin{minipage}[t]{0.5\linewidth}
  \centering
  \includegraphics[width=1.7in]{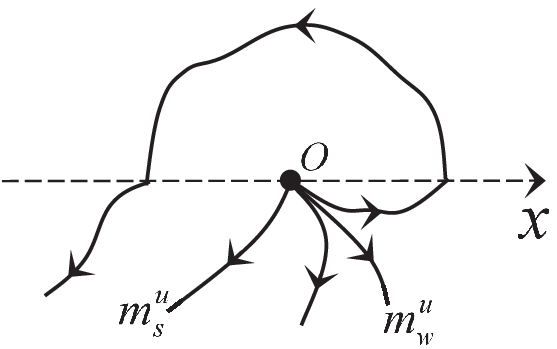}
  \caption*{(a)~ {\small $\lambda^-_1+\lambda^-_2>0$}}
  \end{minipage}
  \begin{minipage}[t]{0.5\linewidth}
  \centering
  \includegraphics[width=1.7in]{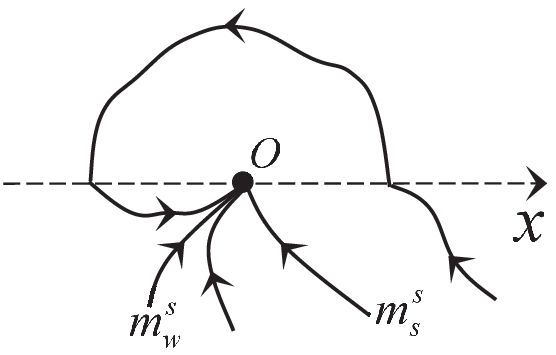}
  \caption*{(b)~ {\small $\lambda^-_1+\lambda^-_2<0$}}
  \end{minipage}
\caption{{\small Local phase portraits of $Z\in\Omega_{fn}$ satisfying (\ref{onecase}) and $\lambda^+_1,\lambda^+_2\in\mathbb{C}\setminus\mathbb{R}, \lambda^-_1,\lambda^-_2\in\mathbb{R}, \lambda^-_1\lambda^-_2>0$ near $O$.}}
\label{fnn}
\end{figure}

We only consider $\lambda^-_1+\lambda^-_2>0$ and $\beta=1$ because the case of $\lambda^-_1+\lambda^-_2<0$ and $\beta=-1$ is similar. Consider two sufficiently small neighborhoods $U\subset {\mathcal U}_0$ and $V\subset {\mathcal U}_0$ of $O$ as shown in Figure~\ref{focusnode}, where ${\mathcal U}_0$ is given in Lemma~\ref{slidy}, $U$ is surrounded by orbital arc $\widehat{AB}$ of $X$ from $A$ to $B$, and arc $\widehat{BA}$ on which $Y$ is transverse to it, $V$ is surrounded by orbital arc $\widehat{A_2B_2}$ of $X_{fn}$ from $A_2$ to $B_2$, and arc $\widehat{B_2A_2}$ on which the vector field $Y_{fn}$ is transverse to it. We need to construct a homeomorphism $H$ from $U$ to $V$ providing the $\Sigma$-equivalence between $Z\in\Omega_{fn}$ with $\lambda^-_1+\lambda^-_2>0$ and $Z_{fn}$ with $\beta=1$.
\begin{figure}
  \begin{minipage}[t]{1.0\linewidth}
  \centering
  \includegraphics[width=5in]{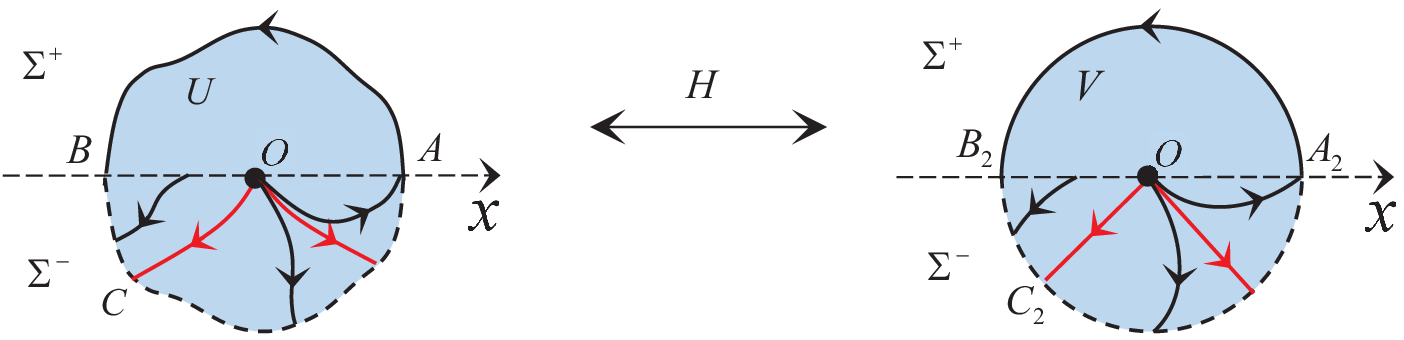}
  \end{minipage}
\caption{{\small The homeomorphism $H$ between $Z\in\Omega_{fn}$ with $\lambda^-_1+\lambda^-_2>0$ and $Z_{fn}$ with $\beta=1$.}}
\label{focusnode}
\end{figure}

By the arc length parametrization there exists a homeomorphism $H_0: \overline{OA}\rightarrow \overline{OA_2}$
such that $H_0(O)=O$ and $H_0(A)=A_2$. Since $O$ is an anticlockwise rotary equilibrium of focus type of $X$, the forward orbit of $X$ starting from $P\in\overline{OA}$ evolves in $\overline{\Sigma^+\cap U}$ until it reaches $\overline{OB}$ at a point $Q$. Then $H_0(P)\in\overline{OA_2}$. Since $O$ is an anticlockwise rotary center of $X_{fn}$, the forward orbit of $X_{fn}$ starting from $H_0(P)$ evolves in $\overline{\Sigma^+\cap V}$ until it reaches $\overline{OB_2}$ at a point $Q_2$. By the arc length parametrization we can identify the orbital arc of $X$ from $P$ to $Q$ with the one of $X_{fn}$ from $H_0(P)$ to $Q_2$. In this way we can define a homeomorphism $H_f: \overline{\Sigma^+\cap U}\rightarrow\overline{\Sigma^+\cap V}$ that maps $\overline{BA}$ onto $\overline{B_2A_2}$, maps the orbits of $X$ in $\overline{\Sigma^+\cap U}$ onto the orbits of $X_{fn}$ in $\overline{\Sigma^+\cap V}$ and satisfies
\begin{eqnarray}
\left.H_f\right|_{\overline{OA}}=H_0.
\label{itoruo}
\end{eqnarray}

Consider the region $R_{BOC}$ surrounded by $\overline{OB}$, $\widehat{BC}$ and the strong unstable manifold $\widehat{OC}$, and the corresponding region $R_{B_2OC_2}$ surrounded by $\overline{OB_2}$, $\widehat{B_2C_2}$ and the strong unstable manifold $\widehat{OC_2}$.
Given $P\in\overline{OB}$, there exists a unique point $Q\in\widehat{BC}$ such that the backward orbit of $Y$ starting from $Q$ evolves in $\overline{R_{BOC}}$ until it reaches or tends to $\overline{OB}$ at $P$, since $\widehat{OC}$ is the strong unstable manifold of the node $O$ for $Y$ and we are assuming that the vector field $Y$ on $\widehat{BA}$ is transverse to $\widehat{BA}$.
Analogously, there exists a unique point $Q_2\in\widehat{B_2C_2}$ such that the backward orbit of $Y_{fn}$ starting from $Q_2$ evolves in $\overline{R_{B_2OC_2}}$ until it reaches or tends to $\overline{OB_2}$ at $H_f(P)$. Therefore,
by the arc length parametrization again we can identify the orbital arc of $Y$ from $P$ to $Q$ with the one of $Y_{fn}$ from $H_f(P)$ to $Q_2$, and then  define a homeomorphism $H^1_n: \overline{R_{BOC}}\rightarrow \overline{R_{B_2OC_2}}$ that maps the orbits of $Y$ in $\overline{R_{BOC}}$ onto
the orbits of $Y_{fn}$ in $\overline{R_{B_2OC_2}}$ and satisfies
\begin{eqnarray}
\left.H^1_n\right|_{\overline{OB}}=\left.H_f\right|_{\overline{OB}}.
\label{uafgkfh}
\end{eqnarray}

Consider the region $R_{COA}$ surrounded by $\widehat{OC}$, $\widehat{CA}$ and $\overline{OA}$, and the corresponding region $R_{B_2OC_2}$ surrounded by $\widehat{OC_2}$, $\widehat{C_2A_2}$ and $\overline{OA_2}$. Regarding arcs $\widehat{CA}$ and $\widehat{C_2A_2}$, we obtain a homeomorphism $H^0_n: \widehat{CA}\rightarrow\widehat{C_2A_2}$ such that $H^0_n(C)=C_2$ and $H^0_n(A)=A_2$ by the arc length parametrization. Since
the choice of $U$ ensures that the vector filed $Y$ on $(\widehat{CA}\cup\overline{OA})\setminus O$ is transverse to $(\widehat{CA}\cup\overline{OA})\setminus O$, the backward orbit of $Y$ starting from $P\in(\widehat{CA}\cup\overline{OA})\setminus O$ evolves in $\overline{R_{COA}}$ and finally tends to $O$. Let $P_2=H_0(P)$ if $P\in\overline{OA}$ and $P_2=H^0_n(P)$ if $P\in\widehat{CA}$. Then the backward orbit of $Y_{fn}$ starting from $P_2$ evolves in $\overline{R_{C_2OA_2}}$ and tends to $O$. Identify the orbital arc of $Y$ from $P$ to $O$ with the orbital arc of $Y_{fn}$ from $P_2$ to $O$. In this way we can define a
homeomorphism $H^2_n: \overline{R_{COA}}\rightarrow \overline{R_{C_2OA_2}}$ that maps the orbits of $Y$ in $\overline{R_{COA}}$ onto the orbits of $Y_{fn}$ in $\overline{R_{C_2OA_2}}$ and satisfies
\begin{eqnarray}
\left.H^2_n\right|_{\widehat{OC}}=\left.H^1_n\right|_{\widehat{OC}}, \qquad \left.H^2_n\right|_{\overline{OA}}=H_0, \qquad \left.H^2_n\right|_{\widehat{CA}}=H^0_n.
\label{urcnkfh}
\end{eqnarray}

Joining the homeomorphisms $H^1_n$ and $H^2_n$, by (\ref{itoruo}), (\ref{uafgkfh}) and (\ref{urcnkfh}) we obtain that
$$
H_n(P)=\left\{
\begin{aligned}
&H^1_n(P) \qquad &&{\rm for} \quad P\in \overline{R_{BOC}},\\
&H^2_n(P) \qquad &&{\rm for} \quad P\in \overline{R_{COA}},
\end{aligned}
\right.
$$
is a homeomorphism from $\overline{\Sigma^-\cap U}$ to $\overline{\Sigma^-\cap V}$ that maps the orbits of $Y$ in $\overline{\Sigma^-\cup U}$ onto the orbits of $Y_{fn}$ in $\overline{\Sigma^-\cup V}$ and satisfies $\left.H_n\right|_{\overline{BA}}=\left.H_f\right|_{\overline{BA}}$.
Consequently, the homeomorphisms $H_n$ and $H_f$ form a homeomorphism $H: U\rightarrow V$ that
maps the orbits of $Z\in\Omega_{fn}$ with $\lambda^-_1+\lambda^-_2>0$ in $U$
onto the orbits of $Z_{fn}$ with $\beta=1$ in $V$, preserving the direction of time and the switching line $\Sigma$. This concludes the proof of Lemma~\ref{fn}.
\end{proof}

\begin{lm}
If $Z=(X, Y)\in\Omega_{fs}$, then $Z$ is locally $\Sigma$-equivalent to $Z_{fs}=(X_{fs}, Y_{fs})\in\Omega_{fs}$ near the origin, where
$$X_{fs}(x, y)=(-y, x), \qquad Y_{fs}(x, y)=(y, x).$$
\label{fsfsfs}
\end{lm}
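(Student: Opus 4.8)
The plan is to reuse, essentially verbatim, the gluing strategy of Lemmas~\ref{ff} and \ref{fn}: after a preliminary reduction, I will build a homeomorphism $H\colon U\to V$ between small neighbourhoods of $O$ that maps the orbits of $Z$ onto those of $Z_{fs}$ while preserving the time direction and the switching line $\Sigma$. First I would apply the changes $(x,y)\to(x,-y)$ and $(x,y)\to(-x,y)$ to reduce to the case where (\ref{onecase}) holds together with $\lambda^+_1,\lambda^+_2\in\mathbb{C}\setminus\mathbb{R}$ and $\lambda^-_1,\lambda^-_2\in\mathbb{R}$, $\lambda^-_1\lambda^-_2<0$; by \cite[Theorems 4.2, 4.3, 5.1]{ZZF} the origin is then an anticlockwise equilibrium of focus type for $X$ and a saddle for $Y$. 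A useful preliminary remark is that, since a saddle forces $\mathrm{Im}\,\lambda^-_1=0$, definition (\ref{eigen2}) gives $\ell=1$ for \emph{every} $Z\in\Omega_{fs}$; hence the sign of the real part of the focus eigenvalues plays no role in the topological type, which is exactly why $X_{fs}$ may be chosen to be the linear centre $(-y,x)$ and why $Z_{fs}$ carries no sign parameter. Combining this with Lemma~\ref{slidy} one sees that the local picture is the single portrait (FS) of Figure~\ref{localphaseportraits}: each relevant orbit enters $\Sigma^-$ near the stable separatrix of $Y$, crosses $\Sigma$ upward on the right crossing set, performs a half-turn in $\Sigma^+$, crosses $\Sigma$ downward on the left crossing set, and escapes near the unstable separatrix of $Y$.

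On the focus side I would copy the construction of $H_f$ from Lemma~\ref{fn} almost word for word. Choosing a transversal segment $\overline{OA}$ on the right crossing set and $\overline{OB}$ on the left one, I define $H_0\colon\overline{OA}\to\overline{OA'}$ by the arc-length parametrisation and extend it to a homeomorphism $H_f\colon\overline{\Sigma^+\cap U}\to\overline{\Sigma^+\cap V}$ by carrying each point along the forward orbit of $X$ until it first reaches $\overline{OB}$ and identifying this arc with the corresponding circular arc of $X_{fs}$. This already fixes the values of $H$ on both crossing segments $\overline{OA}$ and $\overline{OB}$, which are the only portions of $\Sigma$ shared by the focus side and the saddle side.

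The genuinely new ingredient is the saddle side, where I would cut $\overline{\Sigma^-\cap U}$ along the two local separatrices of $Y$, playing the role of the single strong manifold $\widehat{OC}$ used in Lemma~\ref{fn}. Let $\widehat{OC_s}$ and $\widehat{OC_u}$ be the stable and unstable separatrices of the saddle $Y$ inside $\Sigma^-$, and $\widehat{OC_s'},\widehat{OC_u'}$ the corresponding separatrices of $Y_{fs}$, matched by arc length. They split $\overline{\Sigma^-\cap U}$ into three pieces: the region $R_1$ between $\overline{OA}$ and $\widehat{OC_s}$, foliated by orbits that rise to $\overline{OA}$; the region $R_3$ between $\widehat{OC_u}$ and $\overline{OB}$, foliated by orbits issuing downward from $\overline{OB}$; and the central hyperbolic sector $R_2$ between $\widehat{OC_s}$ and $\widehat{OC_u}$, whose orbits enter and leave through the outer boundary of $U$ without meeting $\Sigma$. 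On $R_1$ I extend $H_f|_{\overline{OA}}$ by following $Y$ backward and identifying arcs; on $R_3$ I extend $H_f|_{\overline{OB}}$ by following $Y$ forward; and on $R_2$ I identify each orbit with its image so as to agree with the separatrix values already assigned. Gluing the three partial maps along $\widehat{OC_s}$, $\widehat{OC_u}$ and along $\overline{OA}\cup\overline{OB}$ produces a homeomorphism of $\overline{\Sigma^-\cap U}$ which, together with $H_f$, yields the desired $H\colon U\to V$.

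The step I expect to be the main obstacle is the verification of continuity at and near the saddle. In contrast with the node of Lemma~\ref{fn}, where every orbit of $Y$ tends to $O$, here the orbits filling $R_2$ never approach $O$, while the separatrices reach $O$ only as $t\to\pm\infty$; the delicate point is therefore to show that the three partial maps glue continuously across $\widehat{OC_s}$ and $\widehat{OC_u}$ and, above all, at the common vertex $O$, where orbit arcs of arbitrarily large travel time accumulate. I would control this exactly as in the previous lemmas, by parametrising orbits by arc length rather than by time and by checking that the partial maps coincide on the two separatrices and on the crossing segments, so that the glued map is a genuine homeomorphism. A minor point that must be settled first is that $\widehat{OC_s}$ and $\widehat{OC_u}$ meet $\Sigma$ only at $O$: this follows from $Y_{2x}(0,0)\neq0$, guaranteed by (\ref{condi}), which prevents the eigendirections of $A^-$ from lying along $\Sigma$, so that each separatrix crosses $\Sigma$ transversally at $O$ and the cut into $R_1,R_2,R_3$ is well defined.
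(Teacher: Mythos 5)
Your reduction, your focus-side map $H_f$, and your side remarks (that $\ell$ equals $1$ automatically on $\Omega_{fs}$, so no sign parameter is needed, and that (\ref{condi}) keeps the eigendirections of $A^-$ off $\Sigma$) all agree with the paper. The genuine difference is on the saddle side, and it is exactly there that your argument has a gap. The paper never cuts along the separatrices: it cuts $\overline{\Sigma^-\cap U}$ into just \emph{two} regions along the curve $\widehat{OD}=\{(x,y)\in\overline{\Sigma^-\cap U}:Y_2(x,y)=0\}$ (the negative $y$-axis for $Y_{fs}$). Each of these regions has an inner boundary, $\overline{OB}\cup\widehat{OD}$ resp.\ $\widehat{OD}\cup\overline{OA}$, which every orbit arc of that region meets exactly once, the single separatrix lying in the \emph{interior} of the region as the limiting orbit corresponding to the corner point $P=O$. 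For such regions the recipe ``inner-boundary point plus proportional arc length along the orbit arc'' is automatically continuous up to the separatrix, because the nearby arcs and their lengths converge to that one separatrix arc. Your three-region decomposition instead turns both separatrices into prescribed boundary data and leaves the hyperbolic sector $R_2$ to be filled in compatibly; that is a strictly harder gluing problem and is \emph{not} an instance of what was done in Lemmas~\ref{ff} and~\ref{fn}, since $R_2$, unlike every region used there, has no boundary transversal met once by each of its orbits.

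Concretely, here is why ``identify each orbit with its image so as to agree with the separatrix values already assigned'' does not go through as stated. Orbits of $R_2$ are parametrized by their entry point on the outer arc, and as that entry point tends to $C_s$ the orbit arcs converge not to one separatrix but to the broken curve $\widehat{C_sO}\cup\widehat{OC_u}$, with total length tending to $L_s+L_u$, where $L_s,L_u$ are the arc lengths of $\widehat{OC_s},\widehat{OC_u}$ inside $U$ (write $L'_s,L'_u$ for the model separatrices in $V$). If you match orbits by proportional arc length, the limiting map sends the point of $\widehat{C_sO}\cup\widehat{OC_u}$ at distance $s(L_s+L_u)$ from $C_s$ to the point at distance $s(L'_s+L'_u)$ from $C'_s$; for this to agree on $\widehat{OC_s}$ with the proportional matching already forced by the closure of your $R_1$-map you would need $L'_s/L_s=(L'_s+L'_u)/(L_s+L_u)$, i.e.\ $L'_s/L_s=L'_u/L_u$, which has no reason to hold. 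So the glued map is in general discontinuous along the separatrices, and ``parametrising by arc length as in the previous lemmas'' cannot be the whole fix. The natural repair is to cut $R_2$ by an interior transversal through $O$ crossed once by every $R_2$-orbit --- precisely the curve $\{Y_2=0\}$ --- and glue two one-crossing pieces; but once you do that, the separatrix cuts are superfluous and you have reconstructed the paper's two-region proof.
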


\begin{proof}
Using the changes $(x, y)\rightarrow(x, -y)$ and $(x, y)\rightarrow(-x, y)$, we only need to consider $Z\in\Omega_{fs}$ satisfying (\ref{onecase}) and $$\lambda^+_1, \lambda^+_2\in\mathbb{C}\setminus\mathbb{R},\qquad \lambda^-_1, \lambda^-_2\in\mathbb{R}, \qquad \lambda^-_1\lambda^-_2<0.$$
In this case, $O$ is an equilibrium of focus type of $X$ and a saddle of $Y$ by \cite[Theorems 4.2, 4.4, 5.1]{ZZF}.
Reviewing the dynamics on $\Sigma$ given in Lemma~\ref{slidy}, we depict the local phase portrait of $Z$ near $O$ as shown in Figure~\ref{fsn}. The phase portrait of the vector field $Z_{fs}$ is as shown in (FS) of Figure~\ref{localphaseportraits}.

\begin{figure}
  \begin{minipage}[t]{1.0\linewidth}
  \centering
  \includegraphics[width=1.7in]{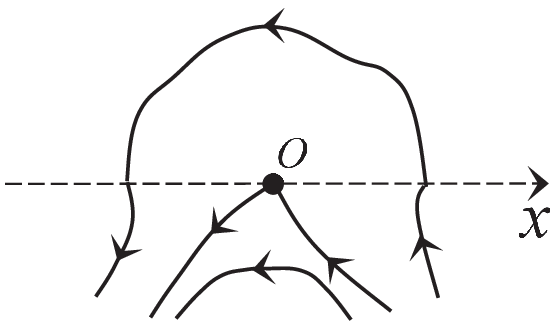}
  \end{minipage}
  \caption{{\small Local phase portrait of $Z\in\Omega_{fs}$ satisfying (\ref{onecase}) and $\lambda^+_1, \lambda^+_2\in\mathbb{C}\setminus\mathbb{R}, \lambda^-_1, \lambda^-_2\in\mathbb{R}, \lambda^-_1\lambda^-_2<0$ near $O$.}}
\label{fsn}
\end{figure}

Consider two sufficiently small neighborhoods $U\subset {\mathcal U}_0$ and $V\subset {\mathcal U}_0$ of $O$ as shown in Figure~\ref{focussaddle}, where $\widehat{AB}$ and $\widehat{A_3B_3}$ are the corresponding orbital arcs, $\widehat{BA}$ (resp. $\widehat{B_3A_3}$) is the arc where the vector field $Y$ (resp. $Y_{fs}$) is transverse to it.
\begin{figure}
  \begin{minipage}[t]{1.0\linewidth}
  \centering
  \includegraphics[width=4.8in]{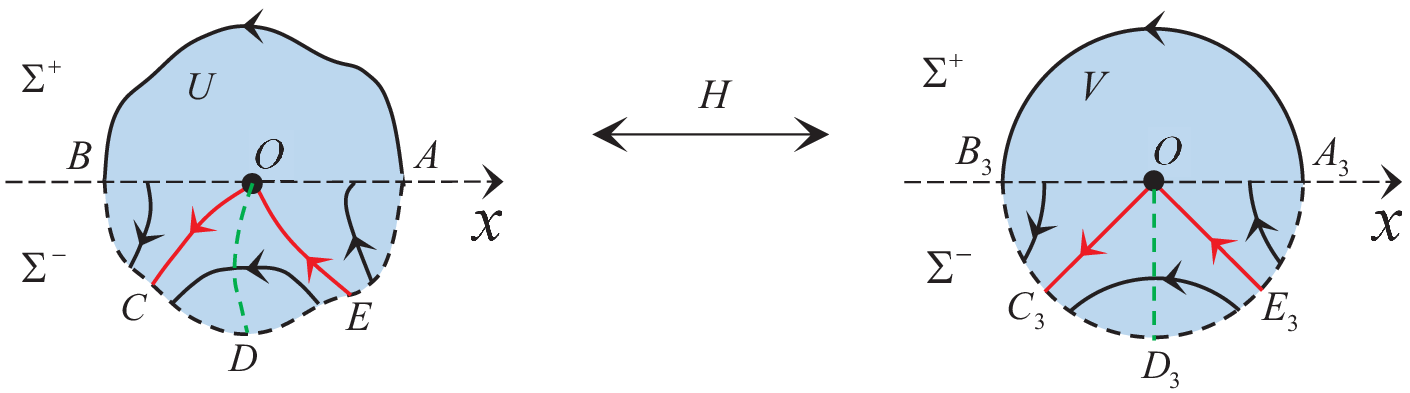}
  \end{minipage}
\caption{{\small The homeomorphism $H$ between $Z\in\Omega_{fs}$ and $Z_{fs}$.}}
\label{focussaddle}
\end{figure}
As done in the proof of Lemma~\ref{fn}, we can define a homeomorphism $H_f: \overline{\Sigma^+\cap U}\rightarrow\overline{\Sigma^+\cap V}$ that maps
$\overline{BA}$ onto $\overline{B_3A_3}$, and maps the orbits of $X$ in $\overline{\Sigma^+\cap U}$ onto the orbits of $X_{fs}$ in $\overline{\Sigma^+\cap V}$.

In order to complete this proof, next we construct a homeomorphism $H_s: \overline{\Sigma^-\cap U}\rightarrow\overline{\Sigma^-\cap V}$ that maps the orbits of $Y$ in $\overline{\Sigma^-\cap U}$ onto the orbits of $X_{fs}$ in $\overline{\Sigma^-\cap V}$ and satisfies $\left.H_s\right|_{\overline{BA}}=\left.H_f\right|_{\overline{BA}}$.
Let
$$\widehat{OD}=\{(x, y)\in\overline{\Sigma^-\cap U}: Y_2(x, y)=0\}, \qquad\overline{OD_3}=\{(x, y)\in\overline{\Sigma^-\cap V}: x=0\},$$
where $Y_2$ is the ordinate of $Y$.
Then there exists a homeomorphism $H^0_s: \widehat{OD}\rightarrow\overline{OD_3}$ such that $H^0_s(O)=O$ and $H^0_s(D)=D_3$ by the arc length parametrization.
Consider the region $R_{BOD}$ surrounded by $\overline{OB}$, $\widehat{BD}$ and $\widehat{OD}$, and the region
$R_{B_3OD_3}$ surrounded by $\overline{OB_3}$, $\widehat{B_3D_3}$ and $\overline{OD_3}$. Given $P\in\overline{OB}\cup\widehat{OD}$, there exists a unique point $Q\in\widehat{BD}$ such that the backward orbit of $Y$ starting from $Q$ evolves in $\overline{R_{BOD}}$ until it either reaches $(\overline{OB}\cup\widehat{OD})\setminus O$ when $P\ne O$ or tends to $O$ when $P=O$, since we require that the vector field $Y$ on $\widehat{BD}$ is transverse to $\widehat{BD}$. Let $P_3=H_f(P)$ if $P\in\overline{OB}$ and $P_3=H^0_s(P)$ if $P\in\widehat{OD}$. We obtain a unique point $Q_3\in\widehat{B_3D_3}$ such that the backward orbit of $Y_{fs}$ starting from $Q_3$ evolves in $\overline{R_{B_3OD_3}}$ until it reaches or tends to $P_3$. The arc length parametrization allows to identify the orbital arc of $Y$ from $Q$ to $P$ and the one of $Y_{fs}$ from $Q_3$ to $P_3$. In this way we
can define a homeomorphism $H^1_s: \overline{R_{BOD}}\rightarrow\overline{R_{B_3OD_3}}$ that maps the orbits of $Y$ in $\overline{R_{BOD}}$ onto the orbits of $Y_{fs}$ in $\overline{R_{B_3OD_3}}$ and satisfies
\begin{eqnarray}
\left.H^1_s\right|_{\overline{OB}}=\left.H_f\right|_{\overline{OB}}, \qquad \left.H^1_s\right|_{\widehat{OD}}=H^0_s.
\label{anfjfcdsfm}
\end{eqnarray}

A similar argument to the last paragraph yields a homeomorphism $H^2_s: \overline{R_{DOA}}\rightarrow\overline{R_{D_3OA_3}}$ that maps the orbits of $Y$ in $\overline{R_{DOA}}$ onto the orbits of $Y_{fs}$ in $\overline{R_{D_3OA_3}}$ and satisfies
\begin{eqnarray}
\left.H^2_s\right|_{\overline{OA}}=\left.H_f\right|_{\overline{OA}}, \qquad \left.H^2_s\right|_{\widehat{OD}}=H^0_s.
\label{anfjfcafdsfm}
\end{eqnarray}
Thus, joining the homeomorphisms $H^1_s$ and $H^2_s$ we construct $H_s$ as
$$
H_s(P)=\left\{
\begin{aligned}
&H^1_s(P) \qquad &&{\rm for} \quad P\in \overline{R_{BOD}},\\
&H^2_s(P) \qquad &&{\rm for} \quad P\in \overline{R_{DOA}}.
\end{aligned}
\right.
$$
From (\ref{anfjfcdsfm}) and (\ref{anfjfcafdsfm}), it follows that $H_s$ is a homeomorphism from $\overline{\Sigma^-\cap U}$ to $\overline{\Sigma^-\cap V}$ maps the orbits of $Y$ in $\overline{\Sigma^-\cap U}$ onto the orbits of $X_{fs}$ in $\overline{\Sigma^-\cap V}$ and satisfies $\left.H_s\right|_{\overline{BA}}=\left.H_f\right|_{\overline{BA}}$.

Consequently, the homeomorphisms $H_s$ and $H_f$ directly form a
homeomorphism $H: U\rightarrow V$ that maps the orbits of $Z\in\Omega_{fs}$ in $U$ onto the orbits of $Z_{fs}$ in $V$, preserving the direction of time and the switching line $\Sigma$. This proves Lemma~\ref{fsfsfs}.
\end{proof}

\begin{lm}
If $Z=(X, Y)\in\Omega_{nn}$, then $Z$ is locally $\Sigma$-equivalent to $Z_{nn}=(X_{nn}, Y_{nn})\in\Omega_{nn}$ near the origin, where
$$X_{nn}(x, y)=(2\gamma x+y, x+2\gamma y), \qquad Y_{nn}(x, y)=(2\eta x+y, x+2\eta y),$$
and
$$
\left\{
\begin{aligned}
&\gamma=\eta={\rm sign}(\lambda^+_1+\lambda^+_2) \qquad &&{\rm when} \quad (\lambda^+_1+\lambda^+_2)(\lambda^-_1+\lambda^-_2)>0,\\
&\gamma=-\eta=1 \qquad &&{\rm when} \quad (\lambda^+_1+\lambda^+_2)(\lambda^-_1+\lambda^-_2)<0.
\end{aligned}
\right.
$$
\label{nn}
\end{lm}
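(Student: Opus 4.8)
The plan is to follow the strategy of Lemmas~\ref{fn} and \ref{fsfsfs}, except that now the node-type construction must be carried out on \emph{both} half-disks. First I would use the linear changes $(x,y)\mapsto(-x,y)$ and $(x,y)\mapsto(x,-y)$, which fix $\Sigma$ and leave the time direction untouched, to reduce to the situation where $Z$ satisfies (\ref{onecase}) --- so that by Lemma~\ref{slidy} the right (resp. left) crossing segment of $\Sigma$ carries both $X$ and $Y$ upward (resp. downward) --- and where, in the case $(\lambda^+_1+\lambda^+_2)(\lambda^-_1+\lambda^-_2)<0$, $X$ is the unstable node and $Y$ the stable one, corresponding to $\gamma=-\eta=1$. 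By \cite[Theorems 4.3, 5.1]{ZZF}, $O$ is a node of both $X$ and $Y$; combining this with the sliding dynamics of Lemma~\ref{slidy} yields exactly three local phase portraits, distinguished by the signs of $\lambda^+_1+\lambda^+_2$ and $\lambda^-_1+\lambda^-_2$, and these are the portraits (NN-1), (NN-2) and (NN-3) of Figure~\ref{localphaseportraits}. I would treat the representative case $\gamma=\eta=1$ in detail and remark that $\gamma=\eta=-1$ and $\gamma=-\eta=1$ are handled in the same way.

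Next I would construct the two half-homeomorphisms by the device of Lemma~\ref{fn}. In the case $\gamma=\eta=1$ the origin is an unstable node of $X$, whose strong and weak invariant manifolds are curves through $O$ transverse to $\Sigma$ --- no eigendirection is horizontal because (\ref{condi}) forces $X_{2x}(0,0)\ne0$ --- so their traces in $\overline{\Sigma^+\cap U}$ are two arcs issuing from $O$. Since (\ref{onecase}) makes the strong manifold ray precede the weak one as the angle measured from the right crossing segment increases, these two arcs together with the crossing segments cut $\overline{\Sigma^+\cap U}$ into exactly three sectors: one swept by orbits entering through the right crossing segment and leaving through the outer arc, one swept by orbits issuing from $O$ and leaving through the outer arc, and one swept by orbits issuing from $O$ and leaving through the left crossing segment. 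On each sector I would fix a homeomorphism between the appropriate boundary transversals by arc-length parametrization and transport it along the flow, identifying every orbital arc of $X$ with the orbital arc of $X_{nn}$ joining the matched boundary points, exactly as $H^1_n$ and $H^2_n$ were built in Lemma~\ref{fn}. This yields a homeomorphism $H^+:\overline{\Sigma^+\cap U}\to\overline{\Sigma^+\cap V}$ carrying orbits of $X$ to orbits of $X_{nn}$ and the manifolds of $X$ to those of $X_{nn}$. The node $Y$ on the lower half-disk is treated by the mirror argument, with the sectors dictated by the stability of $Y$, producing $H^-:\overline{\Sigma^-\cap U}\to\overline{\Sigma^-\cap V}$ that carries orbits of $Y$ to orbits of $Y_{nn}$; the cases $\gamma=\eta=-1$ and $\gamma=-\eta=1$ only change which families fill the sectors and are handled identically.

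Finally I would glue the two maps along $\Sigma$. Fixing in advance a single homeomorphism $H_0$ of $\Sigma\cap U$ onto $\Sigma\cap V$ that sends $O$ to $O$ and each crossing segment onto the corresponding one, I would arrange the boundary parametrizations above so that $\left.H^+\right|_{\Sigma\cap U}=\left.H^-\right|_{\Sigma\cap U}=H_0$. Then
$$
H(P)=\left\{
\begin{aligned}
&H^+(P) \qquad &&{\rm for}\quad P\in(\Sigma^+\cup\Sigma)\cap U,\\
&H^-(P) \qquad &&{\rm for}\quad P\in(\Sigma^-\cup\Sigma)\cap U,
\end{aligned}
\right.
$$
is a homeomorphism from $U$ to $V$ that maps the orbits of $Z\in\Omega_{nn}$ onto the orbits of $Z_{nn}$, preserving the direction of time and the switching line $\Sigma$, which is the desired local $\Sigma$-equivalence between $Z$ and $Z_{nn}$.

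I expect this gluing along $\Sigma$ to be the main obstacle. In Lemmas~\ref{fn} and \ref{fsfsfs} one of the two fields was of focus type, so its orbits swept the entire half-disk from the right crossing segment to the left one; that focus thereby dictated a single parametrization of $\Sigma$ which the other, non-focus, side merely had to reproduce. Here both fields are nodes and no orbit connects the two crossing segments inside one half-plane: each half-disk is instead partitioned by invariant manifolds into sectors, and the orbits reaching the two crossing segments lie in different sectors and approach $O$ with different (weak versus strong) tangencies. Consequently neither side canonically fixes the parametrization of $\Sigma$, so one must choose $H_0$ and the two sector decompositions simultaneously, making $H^+$ and $H^-$ agree with $H_0$ on both crossing segments while also matching continuously across the images of the strong and weak manifolds. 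Checking that the resulting piecewise-defined map is a genuine homeomorphism near $O$ --- in particular that the sectors issuing directly from $O$ on the two sides are glued consistently at the non-smooth equilibrium --- is the delicate part of the argument.
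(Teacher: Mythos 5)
Your proposal is correct and follows essentially the same route as the paper: reduce by the linear changes to the case (\ref{onecase}), identify the three phase portraits (NN-1)--(NN-3), fix in advance a homeomorphism of $\Sigma\cap U$ onto $\Sigma\cap V$ preserving $O$ and the two crossing segments, and extend it into each half-disk by the node-sector construction of $H_n$ from Lemma~\ref{fn}, then glue along $\Sigma$. The only remark worth making is that the gluing you single out as the delicate point is actually \emph{easier} here than in the focus cases: since no orbit of either node connects the two crossing segments inside a half-plane, there is no return-map (conjugacy) constraint on $H_0$ at all, so any choice of $H_0$ extends independently to both sides --- which is precisely why the paper's own proof of this lemma is so short.
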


\begin{proof}
For $Z\in\Omega_{nn}$ we know that $O$ is a node of both $X$ and $Y$ with two different eigenvalues by \cite[Theorem 4.3]{ZZF}. Moreover, using the change $(x, y)\rightarrow(-x, y)$ it is enough to consider $Z\in\Omega_{nn}$ satisfying (\ref{onecase}). In this case, according to the dynamics on $\Sigma$ given in Lemma~\ref{slidy}, we get four local phase portraits of $Z$ near $O$ as shown in Figure~\ref{nnn}, depending on the sign of $\lambda^\pm_1+\lambda^\pm_2$, namely the stability of $O$ as an equilibrium of $X$ and $Y$. However, we notice that the phase portrait (d) of Figure~\ref{nnn} can be transformed into
(b) of Figure~\ref{nnn} by the change $(x, y)\rightarrow(-x, -y)$, so that there are essentially three different types of the local phase portraits of $Z$ near $O$. Besides, a simple analysis implies that the phase portrait of $Z_{nn}$ is (NN-1) (resp. (NN-2) and (NN-3))
of Figure~\ref{localphaseportraits} if $\gamma=\eta=1$ (resp. $\gamma=-\eta=1$ and $\gamma=\eta=-1$).
\begin{figure}
  \begin{minipage}[t]{0.5\linewidth}
  \centering
  \includegraphics[width=1.7in]{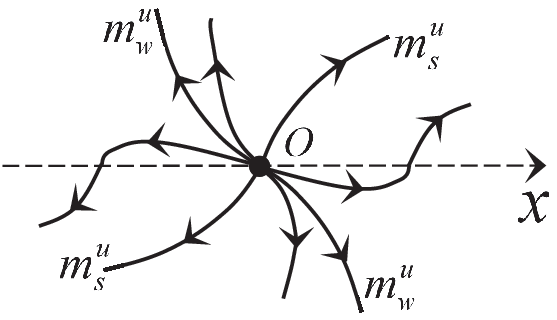}
  \caption*{(a)~ {\small $\lambda^+_1+\lambda^+_2>0,~\lambda^-_1+\lambda^-_2>0$}}
  \end{minipage}
  \begin{minipage}[t]{0.5\linewidth}
  \centering
  \includegraphics[width=1.7in]{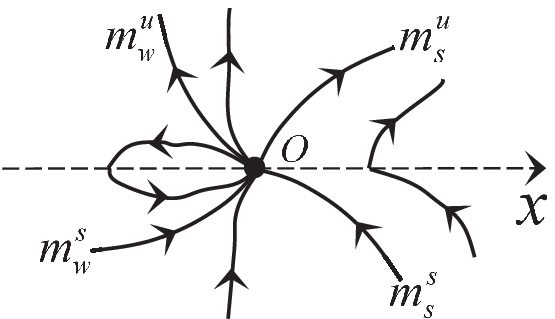}
  \caption*{(b)~ {\small $\lambda^+_1+\lambda^+_2>0,~\lambda^-_1+\lambda^-_2<0$}}
  \end{minipage}
  \begin{minipage}[t]{0.5\linewidth}
  \centering
  \includegraphics[width=1.7in]{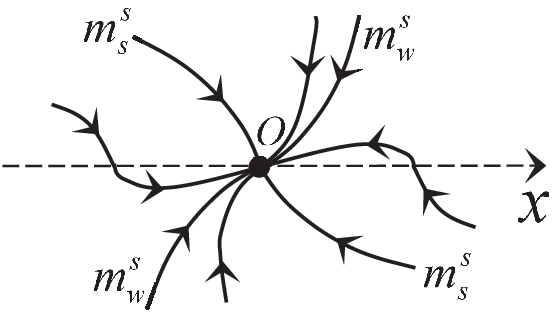}
  \caption*{(c)~ {\small $\lambda^+_1+\lambda^+_2<0,~\lambda^-_1+\lambda^-_2<0$}}
  \end{minipage}
    \begin{minipage}[t]{0.5\linewidth}
  \centering
  \includegraphics[width=1.7in]{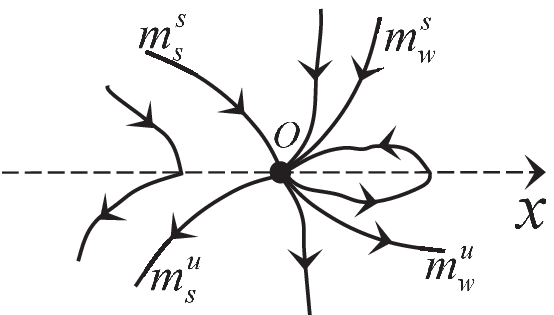}
  \caption*{(d)~ {\small $\lambda^+_1+\lambda^+_2<0,~\lambda^-_1+\lambda^-_2>0$}}
  \end{minipage}
\caption{{\small Local phase portraits of $Z\in\Omega_{nn}$ satisfying (\ref{onecase}) near $O$.}}
\label{nnn}
\end{figure}

The homeomorphism between $Z\in\Omega_{nn}$ and $Z_{nn}$ can be constructed by a similar method to the proofs of foregoing lemmas.
In fact, consider the case of $\lambda^+_1+\lambda^+_2>0, \lambda^-_1+\lambda^-_2>0$ and $\gamma=\eta=1$ as an example. We can choose
two sufficiently small neighborhoods $U\subset {\mathcal U}_0$ and $V\subset {\mathcal U}_0$ of $O$ such that
$Z$ is transverse to the boundary of $U$ and $Z_{nn}$ is transverse to the boundary of $V$. Then there is always a homeomorphism $H: \Sigma\cap U\rightarrow \Sigma\cap V$
satisfying $H(O)=O, H(\Sigma_l\cap U)=\Sigma_l\cap V$ and $H(\Sigma_r\cap U)=\Sigma_r\cap V$, where
$\Sigma_l=\{(x, 0)\in {\mathcal U}: x<0\}$ and $\Sigma_r=\{(x, 0)\in {\mathcal U}: x>0\}$.
Like the construction of $H_n$ in the proof of Lemma~\ref{fn}, we are able to extend $H$ for $\Sigma^+\cap U$ and $\Sigma^-\cap U$ respectively,
and finally obtain a homeomorphism from $U$ to $V$ that provides the $\Sigma$-equivalence between $Z\in\Omega_{nn}$ with $\lambda^+_1+\lambda^+_2>0, \lambda^-_1+\lambda^-_2>0$ and $Z_{nn}$ with $\gamma=\eta=1$. That is, Lemma~\ref{nn} holds.
\end{proof}

\begin{lm}
If $Z=(X, Y)\in\Omega_{ns}$, then $Z$ is locally $\Sigma$-equivalent to $Z_{ns}=(X_{ns}, Y_{ns})\in\Omega_{ns}$ near the origin, where
$$X_{ns}(x, y)=(2\xi x+y, x+2\xi y),\qquad Y_{ns}(x, y)=(y, x),$$
and
$$\xi=
\left\{
\begin{aligned}
&{\rm sign}(\lambda^-_1+\lambda^-_2) \qquad {\rm when}~ \lambda^-_1\lambda^-_2>0,\\
&{\rm sign}(\lambda^+_1+\lambda^+_2) \qquad {\rm when}~ \lambda^+_1\lambda^+_2>0.
\end{aligned}
\right.
$$
\label{ns}
\end{lm}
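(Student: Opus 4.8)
The plan is to follow the same blueprint as in Lemmas~\ref{ff}--\ref{nn}: to exhibit a homeomorphism $H:U\to V$ between small neighborhoods of $O$ that carries the orbits of $Z$ onto those of $Z_{ns}$, building it separately on $\overline{\Sigma^+\cap U}$ and $\overline{\Sigma^-\cap U}$ and then gluing the two pieces along $\Sigma$. First I would normalize. Since $Z\in\Omega_{ns}$ has all $\lambda^\pm_j\in\mathbb{R}$ with exactly one of $A^+,A^-$ having eigenvalues of the same sign and the other of opposite signs, the change $(x,y)\to(x,-y)$ lets me assume that $X$ carries the node ($\lambda^+_1\lambda^+_2>0$) and $Y$ the saddle ($\lambda^-_1\lambda^-_2<0$), and $(x,y)\to(-x,y)$ lets me assume that (\ref{onecase}) holds; the sign of $\lambda^+_1+\lambda^+_2$ then fixes $\xi$, and I would carry out the case $\xi=1$ in detail, the case $\xi=-1$ being entirely analogous. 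By \cite[Theorems 4.3, 4.4]{ZZF}, $O$ is a node of $X$ and a saddle of $Y$, so together with the crossing dynamics described in Lemma~\ref{slidy} the local phase portrait of $Z$ near $O$ is the one shown in (NS-1) (resp.\ (NS-2)) of Figure~\ref{localphaseportraits}; a direct inspection shows that $Z_{ns}$ realizes the same portrait, since $X_{ns}$ has eigenvalues $2\xi\pm1$ of one sign (a node) while $Y_{ns}=(y,x)$ is a saddle with separatrices $y=\pm x$.

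On $\overline{\Sigma^+\cap U}$ the field $X$ has a node at $O$, so I would construct a homeomorphism $H_f:\overline{\Sigma^+\cap U}\to\overline{\Sigma^+\cap V}$ exactly by the node argument of Lemma~\ref{nn}: choose $U$ and $V$ so that $X$ and $X_{ns}$ are transverse to their outer boundaries, fix a homeomorphism between the two crossing segments of $\Sigma\cap U$ and $\Sigma\cap V$, and extend it across the node fan by arc-length parametrization along orbits, so that orbits of $X$ go to orbits of $X_{ns}$. On $\overline{\Sigma^-\cap U}$ the field $Y$ has a saddle at $O$, and because the saddle normal form $Y_{ns}=(y,x)$ coincides with $Y_{fs}$ of Lemma~\ref{fsfsfs}, I would reuse that construction essentially verbatim: set $\widehat{OD}=\{Y_2=0\}\cap\overline{\Sigma^-\cap U}$, split the lower neighborhood into the two regions $R_{BOD}$ and $R_{DOA}$ bounded by $\widehat{OD}$, and on each of them identify the orbital arcs of $Y$ with those of $Y_{ns}$ through arc-length parametrization of backward orbits, thereby obtaining $H_s:\overline{\Sigma^-\cap U}\to\overline{\Sigma^-\cap V}$ whose restriction to the two $\Sigma$-segments equals the map already used for $H_f$.

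I would then glue the two pieces. Since $H_f$ and $H_s$ are both built from the same prescribed homeomorphism of $\Sigma\cap U$ onto $\Sigma\cap V$, they agree on the overlap $\Sigma\cap U$, so the definition $H=H_f$ on $(\Sigma^+\cup\Sigma)\cap U$ and $H=H_s$ on $(\Sigma^-\cup\Sigma)\cap U$ yields a single-valued homeomorphism $H:U\to V$ that maps orbits of $Z$ onto orbits of $Z_{ns}$, preserves the direction of time, and sends $\Sigma\cap U$ to $\Sigma\cap V$. This is precisely the local $\Sigma$-equivalence demanded by Definition~\ref{signaequil}, and completes the proof.

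I expect the main obstacle to be the gluing along $\Sigma$ in the saddle half. The delicate point is that the two separatrices of $Y$ together with the curve $\{Y_2=0\}$ must meet $\Sigma$ and partition the lower neighborhood in the same combinatorial pattern as their counterparts for $Y_{ns}$, and that the two backward-orbit parametrizations on $R_{BOD}$ and $R_{DOA}$ must fit together into a continuous, single-valued map across $\widehat{OD}$ that still agrees with $H_f$ on both crossing segments of $\Sigma$. Here the hypotheses defining $\Omega_1$ are what make this work: $\lambda^-_1\ne\lambda^-_2$ keeps the saddle nondegenerate with transversally distinct separatrices, while Lemma~\ref{slidy}, guaranteed by (\ref{condi}), excludes sliding near $O$ so that every orbit crosses $\Sigma$ transversally and the regional decompositions on the two sides are compatible, turning the glued map into a genuine homeomorphism.
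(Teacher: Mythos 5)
Your proposal is correct and follows essentially the same route as the paper's own proof: normalize by the reflections $(x,y)\to(x,-y)$ and $(x,y)\to(-x,y)$ so that $X$ is the node and $Y$ the saddle with (\ref{onecase}), identify the two phase portraits via \cite[Theorems 4.3, 4.4]{ZZF} and Lemma~\ref{slidy}, build the $\Sigma$-homeomorphism by arc length, and extend it to $\Sigma^+\cap U$ by the node construction ($H_n$ of Lemma~\ref{fn}, which is what Lemma~\ref{nn} also reuses) and to $\Sigma^-\cap U$ by the saddle construction ($H_s$ of Lemma~\ref{fsfsfs}), gluing along $\Sigma$. The paper's proof is exactly this assembly of the previously built pieces, so no further comparison is needed.
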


\begin{proof}
Using the changes $(x, y)\rightarrow(x, -y)$ and $(x, y)\rightarrow(-x, y)$, we only need to consider $Z\in\Omega_{ns}$ satisfying (\ref{onecase}), $\lambda^+_1\lambda^+_2>0$ and $\lambda^-_1\lambda^-_2<0$.
In this case, $O$ is a node of $X$ and a saddle of $Y$ by \cite[Theorems 4.3, 4.4]{ZZF}.
Combining with the dynamics on $\Sigma$ given in Lemma~\ref{slidy}, we get two different types of the local phase portraits of $Z$ near $O$ as shown in Figure~\ref{nsn}, depending on the sign of $\lambda^+_1+\lambda^+_2$. Regarding $Z_{ns}$, its phase portrait is (NS-1) (resp. (NS-2)) of Figure~\ref{localphaseportraits} if $\xi=1$ (resp. $\xi=-1$).

\begin{figure}
  \begin{minipage}[t]{0.5\linewidth}
  \centering
  \includegraphics[width=1.75in]{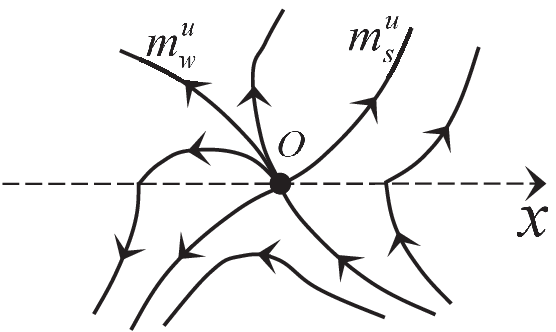}
  \caption*{(a)~ {\small $\lambda^+_1+\lambda^+_2>0$}}
  \end{minipage}
  \begin{minipage}[t]{0.5\linewidth}
  \centering
  \includegraphics[width=1.75in]{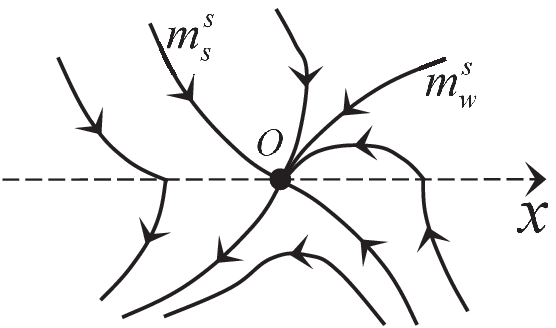}
  \caption*{(b)~ {\small $\lambda^+_1+\lambda^+_2<0$}}
  \end{minipage}
\caption{{\small Local phase portraits of $Z\in\Omega_{ns}$ satisfying (\ref{onecase}), $\lambda^+_1\lambda^+_2>0$ and $\lambda^-_1\lambda^-_2<0$ near $O$.}}
\label{nsn}
\end{figure}

Consider two sufficiently small neighborhoods $U\subset {\mathcal U}_0$ and $V\subset {\mathcal U}_0$ of $O$ such that
$Z$ is transverse to the boundary of $U$ and $Z_{ns}$ is transverse to the boundary of $V$. For each one of the above two cases, we can define a homeomorphism $H$ with $H(O)=O$ to identify $\Sigma\cap U$ with $\Sigma\cap V$ by the arc length parametrization. Then $H$ can be extended for $\Sigma^+\cap U$ (resp. $\Sigma^-\cap U$) as the construction of $H_n$ (resp. $H_s$) in the proof of Lemma~\ref{fn} (resp. Lemma~\ref{fsfsfs}). That is, $H$ is a homeomorphism from $U$ to $V$ that provides $\Sigma$-equivalence, and then Lemma~\ref{ns} holds.
\end{proof}

\begin{lm}
If $Z=(X, Y)\in\Omega_{ss}$, then $Z$ is locally $\Sigma$-equivalent to $Z_{ss}=(X_{ss}, Y_{ss})\in\Omega_{ss}$ near the origin, where
$$X_{ss}(x, y)=(y, x),\qquad Y_{ss}(x, y)=(y, x).$$
\label{ssss}
\end{lm}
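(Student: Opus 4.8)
The plan is to follow the same strategy as in Lemmas~\ref{ff}--\ref{ns}: exhibit the normal form $Z_{ss}$ and build an explicit $\Sigma$-preserving homeomorphism from a small neighbourhood of $O$ for $Z$ onto one for $Z_{ss}$. First I would normalise the configuration. Using the change $(x,y)\to(-x,y)$ if necessary, I reduce to the standard case (\ref{onecase}), so that $X_{2x}(0,0)>0$ and $Y_{2x}(0,0)>0$; note that $Z_{ss}$ itself satisfies (\ref{onecase}), since $X_{ss}=Y_{ss}=(y,x)$ gives $X_{2x}(0,0)=Y_{2x}(0,0)=1$. By $\lambda^\pm_1\lambda^\pm_2<0$ and \cite[Theorem 4.4]{ZZF}, $O$ is a saddle of $X$ in $\Sigma^+$ and a saddle of $Y$ in $\Sigma^-$; moreover, since $X_{2x}(0,0)\neq0$ (resp. $Y_{2x}(0,0)\neq0$) no eigenvector of $A^+$ (resp. $A^-$) is parallel to $\Sigma$, so each eigendirection crosses $\Sigma$ transversally at $O$. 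Consequently $X$ has, inside $\overline{\Sigma^+\cap U}$, exactly one stable and one unstable separatrix reaching $O$ transversally to $\Sigma$, and likewise for $Y$ in $\overline{\Sigma^-\cap U}$. Combining this with the crossing dynamics of Lemma~\ref{slidy} shows that the local phase portrait of $Z$ near $O$ is the one in (SS) of Figure~\ref{localphaseportraits}; the same portrait is produced by the smooth saddle $Z_{ss}$.

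The crucial structural feature, which I would record next, is that the saddle dynamics force escape. The two separatrices of $X$ split $\overline{\Sigma^+\cap U}$ into three curvilinear sectors: a sector $R_1$ bounded by the right crossing segment, the unstable separatrix and part of $\partial U$; a central ``cap'' sector $R_2$ bounded by both separatrices and $\partial U$; and a sector $R_3$ bounded by the stable separatrix, the left crossing segment and $\partial U$. Every orbit in $R_1$ enters through the right crossing set and leaves through $\partial U$; every orbit in $R_3$ enters through $\partial U$ and leaves through the left crossing set; every orbit in $R_2$ enters and leaves through $\partial U$. In particular no orbit near $O$ meets $\Sigma$ more than once, so, unlike the focus-focus case of Lemma~\ref{ff}, there is no return map and hence no conjugacy condition of the type (\ref{jaffse}) to be satisfied; the construction is therefore direct, as in Lemma~\ref{fsfsfs}. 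The field $Y$ (and also $X_{ss},Y_{ss}$) admits the symmetric trichotomy in the opposite half-plane.

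I would then build the homeomorphism piecewise, exactly by the mechanism used for the saddle part of Lemma~\ref{fsfsfs}. Choose small $U,V$ with boundaries transverse to the respective fields and meeting $\Sigma$ at the endpoints of the crossing segments. Define $H_0$ on $\Sigma\cap U$ by arc-length parametrization with $H_0(O)=O$, sending the right (resp. left) crossing segment of $Z$ onto that of $Z_{ss}$. Extend $H_0$ to $H^+\colon\overline{\Sigma^+\cap U}\to\overline{\Sigma^+\cap V}$ sector by sector: fix an arc-length homeomorphism between the corresponding boundary arcs of $\partial U$ and $\partial V$, then transport it along the orbits of $X$ and $X_{ss}$ (flowing forward or backward according to the sector), which automatically sends the separatrices of $X$ onto those of $X_{ss}$ and agrees with $H_0$ on $\Sigma$. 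An identical construction in the lower half-plane, using the orbits of $Y$ and $Y_{ss}$, yields $H^-\colon\overline{\Sigma^-\cap U}\to\overline{\Sigma^-\cap V}$ with $\left.H^-\right|_{\Sigma}=H_0=\left.H^+\right|_{\Sigma}$. Gluing, the map $H=H^+$ on $(\Sigma^+\cup\Sigma)\cap U$ and $H=H^-$ on $(\Sigma^-\cup\Sigma)\cap U$ is the desired candidate.

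The main obstacle is the verification that this piecewise map is genuinely a homeomorphism. The delicate points are continuity across the separatrices, where neighbouring orbits spend arbitrarily long times near $O$ and the orbit-transport parametrization degenerates, and continuity at $O$ itself; both are handled by anchoring the construction to the arc-length parametrization on the transverse boundary arcs and on $\Sigma$, exactly as in Lemma~\ref{fsfsfs}. One also checks that $H^+$ and $H^-$ match along each crossing set, which is immediate since both restrict to $H_0$, so that $H$ is continuous across $\Sigma$. By construction $H$ maps the orbits of $Z$ onto the orbits of $Z_{ss}$ preserving the direction of time and the switching line $\Sigma$, which establishes the local $\Sigma$-equivalence and proves the lemma.
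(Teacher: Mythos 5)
Your reduction to (\ref{onecase}), the identification of the (SS) portrait via \cite[Theorem 4.4]{ZZF} and Lemma~\ref{slidy}, and the observation that near $O$ every orbit meets $\Sigma$ at most once (so no return-map compatibility condition of type (\ref{jaffse}) arises and the construction can be direct) all match the paper. The gap is in the actual construction of $H^{\pm}$. You cut $\overline{\Sigma^+\cap U}$ into three sectors whose internal boundaries are the two separatrices of the saddle, and you transport arc-length parametrizations of the arcs of $\partial U$ (and of $\Sigma$) along orbits. But the separatrices are themselves orbits, not curves transverse to the flow, so they cannot serve as anchor curves: the maps built on two adjacent sectors each induce, as limits along nearby orbits, their own homeomorphism of the common separatrix, and these limits need not agree. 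Concretely, in your cap sector $R_2$ every orbit has both endpoints on $\partial U$, and as these orbits shrink to the corner curve (stable branch $\cup\,\{O\}\,\cup$ unstable branch) the $\partial U$-anchored, arc-length-proportional transport converges to the arc-length-proportional homeomorphism of the whole corner curve; this sends $O$ to the point dividing the image corner curve in the ratio $L_s:L_u$ of the lengths of the two separatrix segments of $Z$, which is $O$ only if this ratio coincides with the corresponding ratio for $Z_{ss}$. Generically it does not, so the map you describe is not even continuous at $O$. Likewise, the claim that the transport ``automatically\dots agrees with $H_0$ on $\Sigma$'' is unjustified: a transport anchored on $\partial U$ induces on the crossing segments some homeomorphism, in general different from the arc-length one you fixed as $H_0$, which would break the gluing with $H^-$.

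The paper's construction of $H_s$ in Lemma~\ref{fsfsfs}, which its proof of Lemma~\ref{ssss} invokes, uses a different decomposition that sidesteps exactly this problem: each half-disk is cut along the curve where the second component of the field vanishes ($\{X_2=0\}$ in $\Sigma^+$, $\{Y_2=0\}$ in $\Sigma^-$), a curve the flow crosses transversally. The separatrices then lie in the interiors of the two resulting regions, namely as the orbits whose entry point is the corner $O$ of the entry curve (crossing segment joined with the cutting curve); all transport is anchored on $\Sigma$ (by $H_0$) and on the cutting curve (by an arc-length homeomorphism fixing $O$), and continuity across the separatrices and at $O$ follows from continuity of this anchor data, with no matching condition left to verify. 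To repair your argument you should either adopt that decomposition, or keep your three sectors but fill the cap sector by a map that is explicitly required to extend the separatrix homeomorphisms already forced by $R_1$ and $R_3$ (for instance by anchoring cap orbits at their unique intersection with $\{X_2=0\}$); the purely $\partial U$-anchored transport, as written, does not suffice.
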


\begin{proof}
For $Z\in\Omega_{ss}$ we know that $O$ is a saddle of both $X$ and $Y$ by \cite[Theorem 4.4]{ZZF}.
Using the change $(x, y)\rightarrow(-x, y)$ we only need to consider $Z\in\Omega_{ss}$ satisfying (\ref{onecase}). Together with
the dynamics on $\Sigma$ given in Lemma~\ref{slidy}, this implies that the local phase portrait of $Z$ near $O$ is as shown in Figure~\ref{ssn}.
Moreover, the phase portrait of $Z_{ss}$ is (SS) of Figure~\ref{localphaseportraits}.
\begin{figure}
  \begin{minipage}[t]{1.0\linewidth}
  \centering
  \includegraphics[width=1.8in]{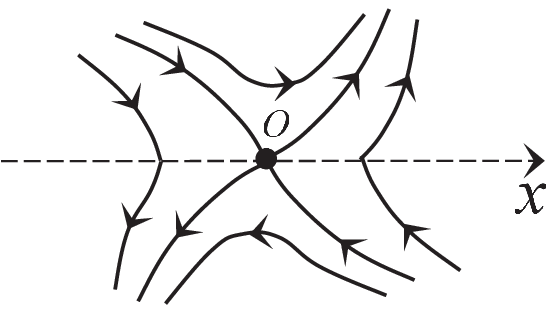}
  \end{minipage}
\caption{{\small Local phase portrait of $Z\in\Omega_{ss}$ satisfying (\ref{onecase}) near $O$.}}
\label{ssn}
\end{figure}

Consider two sufficiently small neighborhoods $U\subset {\mathcal U}_0$ and $V\subset {\mathcal U}_0$ of $O$ such that
$Z$ is transverse to the boundary of $U$ and $Z_{ss}$ is transverse to the boundary of $V$ . We can define a homeomorphism $H$ with $H(O)=O$ to identify $\Sigma\cap U$ with $\Sigma\cap V$ by the arc length parametrization. Repeating the construction of $H_s$ in the proof of Lemma~\ref{fsfsfs}, we extend $H$ for $\Sigma^+\cap U$ and $\Sigma^-\cap U$ respectively, and finally obtain a homeomorphism from $U$ to $V$ that provides $\Sigma$-equivalence between $Z\in\Omega_{ss}$ and $Z_{ss}$. This proves Lemma~\ref{ssss}.
\end{proof}

Now we are in a suitable position to prove Theorem~\ref{normalform}.

\begin{proof}[{\bf Proof of Theorem~\ref{normalform}}]
For $Z\in\Omega_{ff}$ (resp. $\Omega_{fn}, \Omega_{fs}, \Omega_{nn}, \Omega_{ns}, \Omega_{ss}$), the corresponding piecewise linear vector field $Z_L$ given in (\ref{pwl}) is also in $\Omega_{ff}$ (resp. $\Omega_{fn}, \Omega_{fs}, \Omega_{nn}, \Omega_{ns}, \Omega_{ss}$). Thus, by Lemmas~\ref{ff}-\ref{ssss} both $Z$ and $Z_L$ are locally $\Sigma$-equivalent to $Z_{ff}$ (resp. $Z_{fn}, Z_{fs}, Z_{nn}, Z_{ns}, Z_{ss}$) near $O$, which implies that
$Z$ is locally $\Sigma$-equivalent to $Z_L$ near $O$ if $Z\in\Omega_{ff}$ (resp. $\Omega_{fn}, \Omega_{fs}, \Omega_{nn}, \Omega_{ns}, \Omega_{ss}$).
Since $\Omega_1=\Omega_{ff}\cup\Omega_{fn}\cup\Omega_{fs}\cup\Omega_{nn}\cup\Omega_{ns}\cup\Omega_{ss}$, $Z$ is locally $\Sigma$-equivalent to $Z_L$ near $O$ for every $Z\in\Omega_1$. Collecting all non-equivalent phase portraits of $Z_{ff}$, $Z_{fn}, Z_{fs}, Z_{nn}, Z_{ns}$ and $Z_{ss}$ obtained in Lemmas \ref{ff}-\ref{ssss}, we get 11 local phase portraits of $Z\in\Omega_1$ near $O$ as shown in Figure~\ref{localphaseportraits}.
\end{proof}

From Lemmas~\ref{ff}, \ref{nn} and \ref{ssss} we find that some $Z\in\Omega_1$ are locally $\Sigma$-equivalent to smooth linear vector fields
near the origin.

As indicated in Section 1, Theorem~\ref{normalform} does not allow the same eigenvalue for the Jacobian matrices $A^+$ and $A^-$ respectively in order that the vector field in $\Omega_0$ is locally $\Sigma$-equivalent to its linear part near the origin.
The next proposition provides an example showing that the vector field in $\Omega_0$ might not be locally $\Sigma$-equivalent to its linear part near the origin if the Jacobian matrix $A^+$ or $A^-$ has the same eigenvalue.

\begin{prop}
Consider the piecewise smooth vector field $Z^*=(X^*, Y^*)$ with
$$X^*(x, y)=(y, x),~~~~~~~~~~Y^*(x, y)=\left(x+\frac{1}{2}\Gamma\left(x, y\right),~x+y+\frac{1}{2}\Gamma\left(x, y\right)\right),$$
where
$$\Gamma\left(x, y\right)=
\left\{
\begin{aligned}
\left(x^2+y^2\right)^{1/2}&\left(-\frac{1}{2}\ln\left(x^2+y^2\right)\right)^{-3/2}~~~~~&&{\rm if}~~x^2+y^2<1,\\
&0~~~~~&&{\rm if}~~x^2+y^2=0.
\end{aligned}
\right.$$
Then $Z^*\in\Omega_0$ and it is not locally $\Sigma$-equivalent to its linear part
$Z^*_L=(X^*_L, Y^*_L)$ near the origin, where $X^*_L(x, y)=(y, x)$ and $Y^*_L(x, y)=(x,~x+y)$.
\label{example}
\end{prop}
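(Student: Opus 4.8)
The plan is to first certify the membership $Z^{*}\in\Omega_{0}\setminus\Omega_{1}$, and then to prove non-equivalence by showing that the entire discrepancy between $Z^{*}$ and $Z^{*}_{L}$ is concentrated in the lower half-plane, where it cannot be reconciled with the \emph{identical} upper half dynamics by a map respecting $\Sigma$. For the membership I would note that $A^{+}$ has eigenvalues $\pm1$ (so $O$ is a saddle of $X^{*}$) while $A^{-}$ has the double eigenvalue $1$ (so $O$ is a defective node of $Y^{*}_{L}$), whence $\lambda^{-}_{1}=\lambda^{-}_{2}$ and $Z^{*}\notin\Omega_{1}$. Since $\Gamma$ is radial with $\Gamma=r(-\ln r)^{-3/2}$, one has $\partial_{r}\Gamma=(-\ln r)^{-3/2}+\tfrac32(-\ln r)^{-5/2}\to0$ and $\partial_{\theta}\Gamma\equiv0$ as $r\to0$, so $\Gamma\in\mathcal{C}^{1}$ near $O$ with $\nabla\Gamma(O)=0$; hence $X_{2x}(O)=Y_{2x}(O)=1$, which gives (\ref{condi}), while $\det A^{+}\det A^{-}=-1\neq0$ gives (\ref{adc}). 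Thus $Z^{*}\in\Omega_{0}$.

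Because $X^{*}=X^{*}_{L}$, the two fields coincide on $\Sigma^{+}$, so any candidate local $\Sigma$-equivalence $H$ restricts on the closed upper half-neighbourhood to a self-equivalence of the saddle $X^{*}$ that fixes $\Sigma$, its two crossing sets (Lemma~\ref{slidy}), and the saddle separatrices. The obstruction must therefore be produced entirely by the node $Y^{*}$ and by the way the parabolic orbits issuing from $O$ meet the right crossing set $\Sigma_{r}$. To expose it I pass to polar coordinates; since $\Gamma$ is radial, writing $g(r)=\Gamma/r=(-\ln r)^{-3/2}$ yields
\[
\dot r=r\Big(1+\cos\theta\sin\theta+\tfrac{g(r)}{2}(\cos\theta+\sin\theta)\Big),\qquad
\dot\theta=\cos^{2}\theta+\tfrac{g(r)}{2}(\cos\theta-\sin\theta).
\]

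The decisive feature lives on the eigendirection $\theta=3\pi/2$ (the negative $y$-axis). For $Y^{*}_{L}$ (i.e.\ $g\equiv0$) this is an angular equilibrium, so the eigenline is invariant and splits the lower half-neighbourhood into one parabolic sector (through $\Sigma_{r}$) and one hyperbolic sector; for $Y^{*}$ one instead has $\dot\theta=\tfrac12(-\ln r)^{-3/2}>0$ there, so the eigenline is no longer invariant and orbits drift across it. The exponent $-3/2$ is tuned so that $\int_{0}g(r)\,d(\ln r)$ \emph{converges} (its primitive is $2(-\ln r)^{-1/2}$): the angular drift accumulated from $O$ is finite, orbits still leave $O$ tangent to the eigenline, and the germ of $Y^{*}$ at $O$ is topologically the defective node $Y^{*}_{L}$ (consistently with Hartman--Grobman, \cite{PH}). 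Hence the failure is purely one of $\Sigma$-compatibility, and I would detect it by computing, in the spirit of the focus estimate behind Lemma~\ref{ff} and \cite[Theorems~1.1, 1.2]{HZ}, the node flight map sending an orbit issuing from $O$ to its exit point $\rho$ on $\Sigma_{r}$: the drift injects a correction of size $(-\ln\rho)^{-1/2}$ that is absent for $Y^{*}_{L}$. Matched against the saddle flow on $\Sigma^{+}$ — identical for the two fields and pinning the parametrisation of $\Sigma_{r}$ near $O$ through the unstable separatrix — this correction cannot be removed by a homeomorphism sending $\Sigma$ to $\Sigma$, so $Z^{*}$ is not locally $\Sigma$-equivalent to $Z^{*}_{L}$.

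The hard part is precisely the last claim of Step~3: upgrading this smooth-looking discrepancy into a genuine topological $\Sigma$-invariant. Topological (as opposed to $\mathcal{C}^{1}$) equivalence is very flexible, so one must argue that imposing $H(\Sigma)=\Sigma$ \emph{together with} the rigidity forced by the shared saddle near the unstable separatrix leaves no freedom to absorb the $(-\ln\rho)^{-1/2}$ term. I expect the sharp input to be a pair of two-sided estimates on the node flight map near the non-invariant eigendirection, exploiting that $g(r)=(-\ln r)^{-3/2}$ is integrable against $d\ln r$ yet has primitive $(-\ln r)^{-1/2}$ decaying strictly more slowly than every power $r^{\varepsilon}$: this slower-than-power decay is what lets the obstruction survive passage to arbitrary homeomorphisms, while the convergence of $\int g$ is exactly what keeps $O$ an honest node and forces any discrepancy to manifest at $\Sigma$ rather than in the sector structure.
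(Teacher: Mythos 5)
Your membership argument ($Z^*\in\Omega_0$, $\nabla\Gamma(O)=0$, conditions (\ref{adc})--(\ref{condi})) is correct and coincides with the paper's. The non-equivalence half, however, rests on a dynamical claim that is false --- indeed it is the exact opposite of the fact the paper uses. You argue that, because $\int_0 g(r)\,d(\ln r)$ converges, the accumulated angular drift is finite, orbits of $Y^*$ ``still leave $O$ tangent to the eigenline,'' and the germ of $Y^*$ keeps the defective-node sector structure relative to $\Sigma$, so that the only discrepancy is a quantitative flight-map correction. Integrability of $g$ against $d\ln r$ does not protect the eigendirection. In your own polar form set $\phi=\theta-3\pi/2$ and $s=\ln r$; near the ray one gets $d\phi/ds\ge c\bigl(\phi^2+h(s)\bigr)$ with $h(s)=\tfrac12(-s)^{-3/2}$. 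For an orbit entering $\Sigma^-$ at radius $r_0$ (so $u_0=-\ln r_0$), one has $h(s)\ge\tfrac12u_0^{-3/2}$ for all $s\ge \ln r_0$, and the comparison Riccati equation $\psi'=\psi^2+\tfrac12u_0^{-3/2}$ with $\psi(\ln r_0)=-\pi/2$ blows up after $s$-time at most $\pi\sqrt2\,u_0^{3/4}$. Since $u_0^{3/4}=o(u_0)$, every orbit of $Y^*$ entering through the negative $x$-axis crosses the (non-invariant) ray $\theta=3\pi/2$ and reaches the positive $x$-axis at radius at most $r_0\exp\bigl(O\bigl((\ln(1/r_0))^{3/4}\bigr)\bigr)\to0$, i.e.\ inside an arbitrarily small neighborhood: the slower-than-power decay of $(-\ln r)^{-1/2}$, which you noticed, is precisely what makes the drift beat the Riccati gap $|\phi|\sim (s-\ln r_0)^{-1}$, finite total drift notwithstanding. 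This is the content of the paper's citation of \cite[Example 4.3]{ZZF}. Consequently the half-plane portraits differ \emph{topologically}: for $Z^*$ every orbit meeting $\Sigma^-$ near $O$ crosses $\Sigma$ twice (in at the negative $x$-axis, out at the positive $x$-axis), whereas for $Z^*_L$ an orbit entering through the negative $x$-axis stays in the third quadrant and never returns to $\Sigma$, the fourth-quadrant orbits emanating from $O$ itself; one checks easily that every orbit of $Z^*_L$ meets $\Sigma$ at most once. The paper's proof is exactly this intersection-pattern argument: a $\Sigma$-equivalence maps orbits to orbits and $\Sigma$ to $\Sigma$, hence preserves the number of points in which an orbit meets $\Sigma$, and $2\neq 1$.

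Because your qualitative picture is wrong, the step you yourself flag as ``the hard part'' cannot be carried out. If the sector structure and the orbit--$\Sigma$ intersection pattern of $Z^*$ really agreed with those of $Z^*_L$, a correction of size $(-\ln\rho)^{-1/2}$ in a transition map would be no obstruction at all: a $\Sigma$-equivalence may reparametrize $\Sigma$ and every transversal by arbitrary homeomorphisms, and such logarithmic moduli are invisible to purely topological identifications (your own worry about the flexibility of topological equivalence is exactly on target, and it is fatal to the program rather than a technicality to be overcome). Moreover, the object you propose to estimate --- the ``node flight map sending an orbit issuing from $O$ to its exit point on $\Sigma_r$'' --- does not exist for $Y^*$: by the crossing analysis above, no orbit of $Y^*$ in the closed lower half-plane issues from $O$; all of them enter through the negative $x$-axis and leave through the positive one. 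So the proposal, as it stands, has a genuine gap: the correct route is not a quantitative estimate but the qualitative observation that $\Gamma$ destroys the invariant ray and the hyperbolic sector of $Y^*_L$ in $\Sigma^-$.
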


\begin{proof}
We start by proving $Z^*=(X^*, Y^*)\in\Omega_0$. In fact, a straightway calculation implies $\Gamma(0, 0)=0$,
$\Gamma_x(0, 0)=\Gamma_y(0, 0)=0$ and $\Gamma(x, y)$ is continuously differential near $O$.
Thus $Y^*$ is a $\mathcal{C}^1$ vector filed having $O$ as a non-degenerate equilibrium, i.e.,
$Y^*(0, 0)=(0, 0)$ and the determinant of the Jacobian matrix of $Y^*$ at $O$ is nonzero.
Clearly, the vector field $X^*$ is also $\mathcal{C}^1$ and $O$ is a linear saddle of it.
Accordingly, condition (\ref{adc}) holds for $Z^*$. On the other hand, we have $X^*_{2x}(0, 0)=Y^*_{2x}(0, 0)=1$, so that (\ref{condi}) also holds for $Z^*$, where $X^*_2$ and $Y^*_2$ are the ordinates of $X^*$ and $Y^*$ respectively. In conclusion, we get $Z^*\in\Omega_0$
from the definition of $\Omega_0$ given above (\ref{adc}), and the linear part of $Z^*$ is $Z^*_L$ from $\Gamma_x(0, 0)=\Gamma_y(0, 0)=0$.

Next we determine the local phase portraits of $Z^*$ and $Z^*_L$ near $O$ in order to prove that $Z^*$ is not locally
$\Sigma$-equivalent to $Z^*_L$. Regarding $Z^*$, $O$ is a saddle of $X^*$ with the unstable manifold $\{(x, y)\in\mathbb{R}^2: y=x, y>0\}$ and
the stable manifold $\{(x, y)\in\mathbb{R}^2: y=-x, y>0\}$. From \cite[Example 4.3]{ZZF} we have known that all orbits of $Y^*$ near $O$
starting from the negative $x$-axis enter into $\Sigma^-$ and then reach the positive $x$-axis after a finite time. Thus the local phase portrait of $Z^*$ near $O$ is as shown in Figure~\ref{fsexam}(a). Regarding $Z^*_L$, $O$ is an unstable non-diagonalizable node of $Y^*_L$ with the characteristic direction $x=0$. Due to $X^*_L=X^*$, we conclude that the phase portrait of $Z^*_L$ is as shown in Figure~\ref{fsexam}(b).

Consider the orbits of $Z^*$ and $Z^*_L$ starting from the negative $x$-axis. From Figure~\ref{fsexam} we observe that these orbits of $Z^*$ intersect the positive $x$-axis, but the ones of $Z^*_L$ do not. Since any $\Sigma$-equivalence sends the orbits of $Z^*$ to the orbits of $Z^*_L$, preserving
the switching line $\Sigma$, it also preserves the intersections between the orbits and $\Sigma$. Consequently, $Z^*$ cannot be locally $\Sigma$-equivalent to $Z^*_L$ near $O$.
\begin{figure}
  \begin{minipage}[t]{0.5\linewidth}
  \centering
  \includegraphics[width=1.7in]{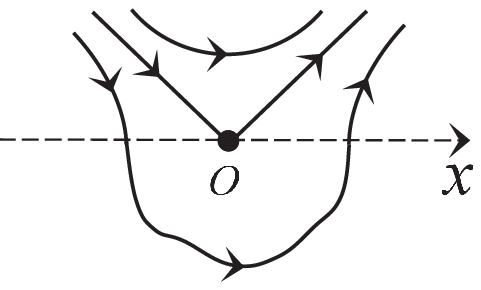}
  \caption*{(a)}
  \end{minipage}
  \begin{minipage}[t]{0.5\linewidth}
  \centering
  \includegraphics[width=1.7in]{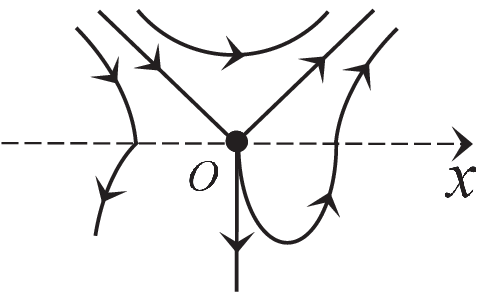}
  \caption*{(b)}
  \end{minipage}
  \caption{{\small Local phase portraits of $Z^*$ and $Z^*_L$ near $O$, where (a) is for $Z^*$ and (b) is for $Z^*_L$.}}
\label{fsexam}
\end{figure}
\end{proof}

\section{Proof of Theorem~\ref{stability}}
\setcounter{equation}{0}
\setcounter{lm}{0}
\setcounter{thm}{0}
\setcounter{rmk}{0}
\setcounter{df}{0}
\setcounter{cor}{0}

Since $\Omega_{ff}$ is an open set of $\Omega_1\subset\Omega_0$, any small perturbation of $Z\in\Omega_{ff}$ inside $\Omega_0$
belongs to $\Omega_{ff}$. In particular,
the value of sign function $\alpha$ defined in Lemma~\ref{ff} is the same for $Z\in\Omega_{ff}$ and its perturbation. Thus by Lemma~\ref{ff} both $Z\in\Omega_{ff}$ and any perturbation of it inside $\Omega_0$ are locally $\Sigma$-equivalent to the same normal form $Z_{ff}$ near $O$. This means that $Z\in\Omega_{ff}$ is locally $\Sigma$-structurally stable with respect to $\Omega_0$ near $O$.
Similar argument can be applied to $Z$ belonging to $\Omega_{fn}$, $\Omega_{fs}, \Omega_{nn}, \Omega_{ns}$ and $\Omega_{ss}$ respectively. Finally,
due to $\Omega_1=\Omega_{ff}\cup\Omega_{fn}\cup\Omega_{fs}\cup\Omega_{nn}\cup\Omega_{ns}\cup\Omega_{ss}$, we conclude that
$Z\in\Omega_1$ is locally $\Sigma$-structurally stable with respect to $\Omega_0$ near $O$, that is, the sufficiency holds.

To obtain the necessity, we can equivalently prove that $Z\in\Omega_0$ is not locally $\Sigma$-structurally stable with respect to $\Omega_0$ near $O$ if $Z\in\Omega_0\setminus\Omega_1$. To do this, we classify $\Omega_0\setminus\Omega_1$ into two subsets:
\begin{eqnarray}
\Omega_2=\{Z\in\Omega_0: {\rm Im}\lambda^+_1{\rm Im}\lambda^-_1\ne0, \ell=0\}, \qquad
\Omega_3=\{Z\in\Omega_0: (\lambda^+_1-\lambda^+_2)(\lambda^-_1-\lambda^-_2)=0\}.
\label{subsetdefinitions}
\end{eqnarray}
Clearly, $\Omega_0\setminus\Omega_1=\Omega_2\cup\Omega_3$. If $Z=(X, Y)\in\Omega_2$, then $O$ is an equilibrium of focus type for both $X$ and $Y$. In this case, $O$ is a non-smooth center or pseudo-focus of focus-focus type of $Z$ with the first Lyapunov constant $\ell=0$ as it was clarified in
\cite[Theorem B]{CGP}. Since $\ell$ only depends on the linear part of $Z$, we easily obtain a perturbed vector field with $\ell>0$ and a perturbed one with $\ell<0$ by perturbing the linear part of $Z$ in $\Omega_0$. This means that, for any sufficiently small neighborhood of $Z$ in $\Omega_0$, there
always exist two vector fields where $O$ is a pseudo-focus with the different stability. Even limit cycles can bifurcate from $O$, e.g., \cite{ZK}.
Then these two perturbed vector fields are not locally $\Sigma$-equivalent near $O$, so that
$Z\in\Omega_2$ is not locally $\Sigma$-structurally stable with respect to $\Omega_0$ near $O$.

If $Z=(X, Y)\in\Omega_3$, then at least one of $\lambda^+_1=\lambda^+_2$ and $\lambda^-_1=\lambda^-_2$ holds. Without loss of generality we assume that
$\lambda^+_1=\lambda^+_2$. Writing $X$ near $O$ as
$$X=A^+\left(x, y\right)^\top+\Upsilon^+(x, y),$$
where $\Upsilon^+(x, y)$ is the higher order terms and
$$
A^+=\left(
\begin{array}{cc}
a_{11}^+&a_{12}^+\\
a_{21}^+&a_{22}^+
\end{array}\right),
$$
we get
\begin{eqnarray}
\lambda^+_1=\lambda^+_2=\frac{1}{2}(a^+_{11}+a^+_{22}), \qquad (a^+_{11}-a^+_{22})^2+4a^+_{12}a^+_{21}=0, \qquad a^+_{21}\ne0.
\label{dnjknfejfew}
\end{eqnarray}
Here $a^+_{21}\ne0$ is due to the fact that $Z\in\Omega_0$ satisfies (\ref{condi}).
Consider the vector field $Z_\varepsilon=(X_\varepsilon, Y)$ with
$$
X_\varepsilon=A^+_\varepsilon\left(x, y\right)^\top+\Upsilon^+(x, y)
$$
and
$$
A^+_\varepsilon=\left(
\begin{array}{cc}
a_{11}^+&\frac{a_{12}^+a_{21}^++\varepsilon/4}{a_{21}^++\varepsilon}\\
a_{21}^++\varepsilon&a_{22}^+
\end{array}\right).
$$
Then for any sufficiently small neighborhood of $Z$ in $\Omega_0$, there exists $\varepsilon_0>0$ such that $Z_\varepsilon$
lies in the neighborhood for all $-\varepsilon_0<\varepsilon<\varepsilon_0$. Denote the eigenvalues of $A^+_\varepsilon$ by $\lambda^+_{\varepsilon, 1}$ and $\lambda^+_{\varepsilon, 2}$. It follows from (\ref{dnjknfejfew}) that
$$\lambda^+_{\varepsilon, 1}=\lambda^+_1+\frac{\sqrt{-\varepsilon}}{2}i, \qquad \lambda^+_{\varepsilon, 2}=\lambda^+_1-\frac{\sqrt{-\varepsilon}}{2}i$$
for $-\varepsilon_0<\varepsilon<0$, while for $\varepsilon_0>\varepsilon>0$,
$$\lambda^+_{\varepsilon, 1}=\lambda^+_1+\frac{\sqrt\varepsilon}{2}, \qquad \lambda^+_{\varepsilon, 2}=\lambda^+_1-\frac{\sqrt\varepsilon}{2}.$$
In the case of $-\varepsilon_0<\varepsilon<0$, $O$ is a focus of $X_\varepsilon$ by \cite[Theorem 4.2]{ZZF}, so that
all orbits of $Z_{\varepsilon}$ near $O$ starting from the positive $x$-axis enter into $\Sigma^+$ and then reach the negative $x$-axis as $t$ increases (resp. decreases) if $a^+_{21}>0$ (resp. $<0$).
In the case of $\varepsilon_0>\varepsilon>0$, $O$ is a diagonalizable node of $X_\varepsilon$ by \cite[Theorem 4.3]{ZZF}, which has two characteristic directions with the nonzero slope due to $a^+_{21}\ne0$. Thus all orbits of $Z_{\varepsilon}$ near $O$ starting from the positive $x$-axis cannot reach the negative $x$-axis from $\Sigma^+$ as $t$ increases (resp. decreases) if $a^+_{21}>0$ (resp. $<0$). As indicated in the proof of Proposition~\ref{example}, any $\Sigma$-equivalence sends the orbits of $Z_\varepsilon$ with $-\varepsilon_0<\varepsilon<0$ to the orbits of $Z_\varepsilon$ with $\varepsilon_0>\varepsilon>0$, preserving the switching line $\Sigma$ and the intersections of $\Sigma$ and the orbits. Consequently, $Z_\varepsilon$ with $-\varepsilon_0<\varepsilon<0$ cannot be locally $\Sigma$-equivalent to $Z_\varepsilon$ with $\varepsilon_0>\varepsilon>0$ near $O$. This means that, for any sufficiently small neighborhood of $Z\in\Omega_3$ in $\Omega_0$, there are always two vector fields that are not locally $\Sigma$-equivalent near $O$. So $Z\in\Omega_3$ is not locally $\Sigma$-structurally stable with respect to $\Omega_0$ near $O$. Recalling the last paragraph, we conclude the necessity. This ends the proof of Theorem~\ref{stability}.

\section{Proof of Theorem~\ref{bifurcation}}
\setcounter{equation}{0}
\setcounter{lm}{0}
\setcounter{thm}{0}
\setcounter{rmk}{0}
\setcounter{df}{0}
\setcounter{cor}{0}

Before proving Theorem~\ref{bifurcation}, we study the limit cycle bifurcations by perturbing
the following piecewise linear vector field
\begin{eqnarray}
Z_0(x, y)=\left\{
\begin{aligned}
&X_0(x, y)=(ay, x) \qquad &&{\rm if}\quad y>0,\\
&Y_0(x, y)=(by, x) \qquad &&{\rm if}\quad y<0,
\end{aligned}
\right.
\label{ppll}
\end{eqnarray}
where $a, b\in\mathbb{R}$ satisfies $ab\ne0$.

\begin{prop}
Consider the piecewise linear vector field $Z_0$ with $ab\ne0$ given in {\rm(\ref{ppll})}. Then $Z_0\in\Omega_1$ if either $a>0$ or $b>0$, and $Z_0\in\Omega_2$ if $a<0$ and $b<0$, where $\Omega_1\subset\Omega_0$ and $\Omega_2\subset\Omega_0\setminus\Omega_1$ are defined in {\rm(\ref{subsetdefinition})} and {\rm(\ref{subsetdefinitions})} separately. Moreover, $Z_0$ has no limit cycles.
\label{zoooo}
\end{prop}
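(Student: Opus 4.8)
The plan is to verify the two assertions in turn. For the classification, I would first record the Jacobian matrices of $X_0$ and $Y_0$ at $O$, namely $A^+$ and $A^-$ with $\det A^+=-a$ and $\det A^-=-b$, and observe that $X_0(0,0)=Y_0(0,0)=(0,0)$, $\det A^+\det A^-=ab\ne0$, and $X_{2x}(0,0)=Y_{2x}(0,0)=1>0$; this shows that (\ref{adc}) and (\ref{condi}) hold, so $Z_0\in\Omega_0$. The characteristic equations are $\lambda^2=a$ and $\lambda^2=b$, giving $\lambda^\pm_{1,2}=\pm\sqrt a$ and $\pm\sqrt b$, which are distinct since $ab\ne0$. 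If $a>0$ or $b>0$, at least one eigenvalue pair is real, so ${\rm Im}\lambda^+_1{\rm Im}\lambda^-_1=0$ and hence $\ell=1\ne0$ by (\ref{eigen2}), placing $Z_0$ in $\Omega_1$. If $a<0$ and $b<0$, both pairs are purely imaginary, so ${\rm Re}\lambda^\pm_1=0$ and ${\rm Im}\lambda^+_1{\rm Im}\lambda^-_1\ne0$, whence $\ell=0$ and $Z_0\in\Omega_2$.

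For the absence of limit cycles, the main tool is that each half-system admits an explicit first integral. A direct differentiation shows that $H^+(x,y)=x^2-ay^2$ is constant along the orbits of $X_0$ and $H^-(x,y)=x^2-by^2$ is constant along the orbits of $Y_0$. Since $X_{2x}(0,0)=Y_{2x}(0,0)=1>0$, Lemma~\ref{slidy} guarantees that $\Sigma$ carries no sliding points (only two crossing sets separated by $O$), with the flow crossing the positive $x$-axis upward and the negative $x$-axis downward. Thus any crossing periodic orbit must enter the upper half-plane on the positive $x$-axis and leave it on the negative $x$-axis, and symmetrically for the lower half-plane.

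The decisive observation is that both first integrals reduce to $x^2$ on $\Sigma$. Consequently, whenever the upper transition map carrying a point of the positive $x$-axis to the negative $x$-axis is defined, conservation of $H^+$ forces it to send $(x_+,0)$ to $(-x_+,0)$; likewise, when the lower transition map is defined, conservation of $H^-$ forces it to send $(-\rho,0)$ to $(\rho,0)$. When $a<0$ and $b<0$ the corresponding orbits are ellipses, both transition maps exist, and their composition is the identity, so every crossing orbit is closed: $O$ is a non-smooth center surrounded by a continuum of periodic orbits, none of which is isolated. When $a>0$ the orbits of $X_0$ lie on the branches of the hyperbolas $H^+={\rm const}>0$, and the orbit issued upward from $(x_+,0)$ stays on the right branch and escapes to infinity without returning to $\Sigma$; hence the upper transition map is nowhere defined and no crossing orbit can close. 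The case $b>0$ is symmetric for the lower half-plane. Finally, no limit cycle can lie in a single half-plane, since each half-system is linear with $O$ a saddle or a center, and neither admits a closed orbit contained in an open half-plane. Collecting these cases shows that $Z_0$ has no limit cycles.

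The step I expect to be most delicate is the saddle case. I must argue carefully that the upper arc issued from $(x_+,0)$ on the right branch of $H^+=x_+^2$ genuinely fails to return to $\Sigma$ -- in particular that it cannot reach the left branch, whose upper arc terminates at $(-x_+,0)$ -- so that the transition map is truly undefined rather than merely discontinuous, and that the left-branch arc is traversed only by orbits entering from infinity and therefore never participates in a bounded crossing orbit. Everything else reduces cleanly to the two conserved quantities and the flow directions supplied by Lemma~\ref{slidy}.
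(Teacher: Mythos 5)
Your proposal is correct and takes essentially the same approach as the paper: membership in $\Omega_0$, $\Omega_1$, $\Omega_2$ is read off from the eigenvalues $\pm\sqrt{a}$, $\pm\sqrt{b}$, and limit cycles are excluded via the saddle/center structure of the two linear pieces, the center--center case being a non-smooth center. Where the paper merely asserts these facts (``global non-smooth center'', ``clearly'' no limit cycles in the saddle cases), you substantiate them with the first integrals $x^2-ay^2$, $x^2-by^2$ and the induced transition maps $x\mapsto -x$; this is a filling-in of detail rather than a different route, and it mirrors the Poincar\'e-map machinery the paper itself uses in Propositions~\ref{ejhfjncjff} and~\ref{ejhadafjff}.
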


\begin{proof}
The first part of Proposition~\ref{zoooo} follows directly from the definitions of $\Omega_1$ and $\Omega_2$.
Since $O$ is saddle or center of $X_0$ and $Y_0$, it is impossible for $Z_0$ to have limit cycles totally contained in the half plane $y\ge0$ or $y\le0$. On the other hand, when $O$ is a center of both $X_0$ and $Y_0$, it is a global non-smooth center for $Z_0$,
so that $Z_0$ has no limit cycles occupying the half planes $y>0$ and $y<0$.
Clearly, there also exist no limit cycles occupying the half planes $y>0$ and $y<0$ when $O$ is a saddle of $X_0$ or $Y_0$.
\end{proof}

Next we state two bifurcation results by perturbing the piecewise linear vector field $Z_0$ given in (\ref{ppll}).

\begin{prop}
Consider the piecewise linear vector field $Z_0\in\Omega_0$ in {\rm(\ref{ppll})}, and the piecewise polynomial vector field
$$
Z^f_\epsilon(x,y)=\left\{
\begin{aligned}
&X^f_\epsilon(x,y)=((a-\epsilon)y-\epsilon, x) \qquad &&{\rm if}\quad y>0,\\
&Y^f_\epsilon(x,y)=\left((b-\epsilon)y+\epsilon, x+\epsilon\frac{\partial f(x, \epsilon)}{\partial x}\right) \qquad &&{\rm if}\quad y<0,
\end{aligned}
\right.
$$
where $\epsilon\ge0$ and
\begin{eqnarray}
f(x, \epsilon)=x\prod^m_{i=1}\left(x^2-\left(\frac{i\epsilon}{m}\right)^2\right).
\label{saasmajf}
\end{eqnarray}
 Then $Z^f_\epsilon=Z_0$ for $\epsilon=0$. Besides, for any given $a$ and $b$ satisfying $0<|a|\le1/2$ and $0<|b|\le1/2$, there exists $0<\epsilon_0<\min\{|a|, |b|\}$ such that for $0<\epsilon<\epsilon_0$, $Z^f_\epsilon$ has exactly $m$ hyperbolic crossing limit cycles $\Gamma_i$ $(i=1, 2, \cdot\cdot\cdot, m)$ bifurcating from the non-smooth equilibrium $O$ of $Z_0$, where $\Gamma_i$ obeys the algebraic curve
 $$
 \Gamma^+_i: \frac{1}{2}x^2-\frac{a-\epsilon}{2}y^2+\epsilon y=\frac{1}{2}\left(\frac{i\epsilon}{m}\right)^2$$
in the half plane $y\ge0$ and the algebraic curve
$$\Gamma^-_i: \frac{1}{2}x^2+\epsilon f(x, \epsilon)-\frac{b-\epsilon}{2}y^2-\epsilon y=\frac{1}{2}\left(\frac{i\epsilon}{m}\right)^2$$
in the half plane $y\le0$. Moreover, $\Gamma_i$ is stable if $m-i$ is even and unstable if $m-i$ is odd.
\label{ejhfjncjff}
\end{prop}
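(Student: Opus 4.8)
The plan is to exploit that each half-system is integrable with an explicit first integral, so that orbits are level curves and the two Poincar\'e half-return maps on $\Sigma$ can be written in closed form. First I would integrate the orbit equation $dy/dx=x/((a-\epsilon)y-\epsilon)$ of $X^f_\epsilon$ to obtain the first integral $H^+(x,y)=\frac12 x^2-\frac{a-\epsilon}{2}y^2+\epsilon y$, and integrate $dy/dx=(x+\epsilon f_x)/((b-\epsilon)y+\epsilon)$ of $Y^f_\epsilon$ to obtain $H^-(x,y)=\frac12 x^2+\epsilon f(x,\epsilon)-\frac{b-\epsilon}{2}y^2-\epsilon y$ (here $\int\epsilon f_x\,dx=\epsilon f$). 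Thus every orbit in $y>0$ (resp. $y<0$) lies on a level set of $H^+$ (resp. $H^-$), and the curves $\Gamma^\pm_i$ in the statement are exactly the level sets $H^\pm=\frac12(i\epsilon/m)^2$. Next, as in Lemma~\ref{slidy}, I would note that on $\Sigma$ one has $X_2(x,0)=x$ and $Y_2(x,0)=x+\epsilon f_x(x,\epsilon)$, so for $\epsilon$ small the positive and negative $x$-axes are crossing sets, orbits entering $y>0$ through $\{x>0\}$ and returning through $\{x<0\}$; hence a small crossing limit cycle meets $\Sigma$ in exactly one positive and one negative point and corresponds to a fixed point of the composed half-map on the positive axis.

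The core computation is the two half-maps. Because $H^+(x,0)=\frac12 x^2$ is even, the upper arc through $(x_0,0)$ with $x_0>0$ returns to the axis at the unique other root $-x_0$, so $\mathcal P^+(x_0)=-x_0$. For the lower arc, writing $g(x):=H^-(x,0)=\frac12 x^2+\epsilon f(x,\epsilon)$, the map $\mathcal P^-$ sends a negative crossing point $x_1$ to the positive $x_2$ with $g(x_2)=g(x_1)$. Composing, a crossing limit cycle through $(x_0,0)$ exists iff $g(x_0)=g(-x_0)$, i.e. iff $f(x_0,\epsilon)-f(-x_0,\epsilon)=0$; since $f(\cdot,\epsilon)$ is odd this reduces to $f(x_0,\epsilon)=0$, whose positive roots in the relevant range are precisely $x_0=i\epsilon/m$, $i=1,\dots,m$. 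This gives exactly $m$ cycles, lying on $\Gamma^\pm_i$. For hyperbolicity and stability I would differentiate: $(\mathcal P^+)'=-1$ and $(\mathcal P^-)'(x_1)=g'(x_1)/g'(x_2)$, so at the fixed point $\mathcal P'(x_0)=-g'(-x_0)/g'(x_0)=\bigl(x_0-\epsilon f_x(x_0,\epsilon)\bigr)/\bigl(x_0+\epsilon f_x(x_0,\epsilon)\bigr)$, using that $f_x$ is even. A product-rule evaluation at the simple root gives $\operatorname{sign} f_x(i\epsilon/m,\epsilon)=(-1)^{m-i}$ with $f_x(i\epsilon/m,\epsilon)\ne0$, whence $\mathcal P'(x_0)\ne1$ (hyperbolic) and $|\mathcal P'(x_0)|<1$ exactly when $m-i$ is even, which is the asserted alternating stability.

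The main obstacle is the well-definedness of the two half-maps on the whole range $0<x_0\le\epsilon$, because $X^f_\epsilon$ (resp. $Y^f_\epsilon$) has an interior equilibrium at $(0,\epsilon/(a-\epsilon))$ (resp. $(0,-\epsilon/(b-\epsilon))$) that, when $a>0$ (resp. $b>0$), lies inside the half-plane it governs and is a saddle. I would show this is harmless for the small cycles: completing the square gives the saddle value $H^+_{\mathrm{saddle}}=\epsilon^2/(2(a-\epsilon))$, and the hypotheses $0<|a|\le\frac12$ and $0<\epsilon_0<\min\{|a|,|b|\}$ force every cycle level $\frac12(i\epsilon/m)^2\le\frac12\epsilon^2$ to stay strictly below it, so the level set of $H^+$ is a single arc joining $(x_0,0)$ to $(-x_0,0)$ through $y>0$ (and symmetrically for $H^-$, the tiny $O(\epsilon^{2m+1})$ term $\epsilon f$ not disturbing the picture). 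This is exactly the role of the magnitude constraints on $a,b,\epsilon$. Finally I would check that $g$ is strictly monotone on each side of $0$ in this range, so $\mathcal P^-$ is single-valued and no spurious fixed points appear, and that restricting to a small neighborhood of $O$ excludes cycles crossing $\Sigma$ more than twice, yielding the count exactly $m$ together with the bifurcation from $O$ as $\epsilon\to0$.
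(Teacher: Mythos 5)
Your proposal is correct and follows essentially the same route as the paper: explicit first integrals for both half-systems, the upper half-return map $x_0\mapsto-x_0$, the level-matching lower map for $g(x)=\frac12x^2+\epsilon f(x,\epsilon)$, reduction of the fixed-point equation to the zeros $x_0=i\epsilon/m$ of the odd function $f$, implicit differentiation giving the alternating hyperbolic stability, and control of the map domains via the saddle separatrices (which the paper phrases through the intersections of the invariant manifolds with $\Sigma$ rather than through saddle values of the first integrals, an equivalent bookkeeping). The only, and harmless, imprecision is that the interior equilibrium and the invisible fold of $Y^f_\epsilon$ sit at $x=x(\epsilon)=O(\epsilon^{2m+1})\neq0$ rather than at $x=0$ (the paper locates $x(\epsilon)$ by the implicit function theorem), so a tiny sliding segment exists on $\Sigma$ near $O$; since all the cycles cross $\Sigma$ at distance at least $\epsilon/m$ from the origin, your count, hyperbolicity and stability conclusions are unaffected.
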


\begin{prop}
Consider the piecewise linear vector field $Z_0\in\Omega_0$ in {\rm(\ref{ppll})}, and the piecewise $\mathcal{C}^\infty$ vector field
$$
Z^g_\epsilon(x,y)=\left\{
\begin{aligned}
&X^g_\epsilon(x,y)=((a-\epsilon)y-\epsilon, x) \qquad &&{\rm if}\quad y>0,\\
&Y^g_\epsilon(x,y)=\left((b-\epsilon)y+\epsilon, x+\epsilon\frac{\partial g(x, \epsilon)}{\partial x}\right) \qquad &&{\rm if}\quad y<0,
\end{aligned}
\right.
$$
where $\epsilon\ge0$ and $g(x, \epsilon)$ is a $\mathcal{C}^\infty$ function given by
$$g(x, \epsilon)=\left\{
\begin{aligned}
&0 \qquad &&{\rm if}\quad x\le0,\\
&e^{-1/x}\sin\left(\frac{\pi\epsilon}{x}\right) \qquad &&{\rm if}\quad x>0.
\end{aligned}
\right.
$$
Then $Z^g_\epsilon=Z_0$ for $\epsilon=0$. Besides, for any given $a$ and $b$ satisfying $0<|a|\le1/2$ and $0<|b|\le1/2$, there exists $0<\epsilon_0<\min\{|a|, |b|\}$ such that for $0<\epsilon<\epsilon_0$, $Z^g_\epsilon$ has infinitely many hyperbolic crossing limit cycles $\Theta_i$ $(i\in\mathbb{N}^+)$ bifurcating from the non-smooth equilibrium $O$ of $Z_0$, where $\Theta_i$ obeys the algebraic curve
$$\Theta^+_i: \frac{1}{2}x^2-\frac{a-\epsilon}{2}y^2+\epsilon y=\frac{1}{2}\left(\frac{\epsilon}{i}\right)^2$$
in the half plane $y\ge0$ and the algebraic curve
$$\Theta^-_i: \frac{1}{2}x^2+\epsilon g(x, \epsilon)-\frac{b-\epsilon}{2}y^2-\epsilon y=\frac{1}{2}\left(\frac{\epsilon}{i}\right)^2$$
in the half plane $y\le0$. Moreover, $\Theta_i$ is stable if $i$ is odd and unstable if $i$ is even.
\label{ejhadafjff}
\end{prop}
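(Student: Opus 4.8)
The plan is to use the fact that each of the two constituent fields of $Z^g_\epsilon$ possesses an explicit first integral, which converts the search for crossing limit cycles into a one–dimensional fixed–point problem for a return map on the positive $x$–axis. Before anything else I would record two elementary facts. At $\epsilon=0$ we have $g\equiv0$, so the $\epsilon$–dependent terms drop out and $Z^g_0=Z_0$. Moreover $g$ is genuinely $\mathcal{C}^\infty$: the flat factor $e^{-1/x}$ forces $g$ and all its $x$–derivatives to tend to $0$ as $x\to0^+$, so the piece on $x>0$ matches the identically zero piece on $x\le0$ to infinite order. A direct differentiation then shows that
$$H^+(x,y)=\tfrac12 x^2-\tfrac{a-\epsilon}{2}y^2+\epsilon y,\qquad H^-(x,y)=\tfrac12 x^2+\epsilon g(x,\epsilon)-\tfrac{b-\epsilon}{2}y^2-\epsilon y$$
are first integrals of $X^g_\epsilon$ and $Y^g_\epsilon$ respectively; this is exactly why the stated algebraic curves $\Theta_i^\pm$ are orbital arcs of $Z^g_\epsilon$.

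Next I would build the Poincar\'e return map. Since $H^+$ restricted to $\Sigma$ is $\tfrac12 x^2$ and $X^g_\epsilon$ is reversible under $(x,y,t)\mapsto(-x,y,-t)$, the upper transition map sends $(x_0,0)$ with $x_0>0$ to $(-x_0,0)$. On the lower side $H^-$ restricted to $\Sigma$ equals $\Phi(x):=\tfrac12 x^2+\epsilon g(x,\epsilon)$ (with $g\equiv0$ for $x\le0$), so the lower transition map sends $(-x_0,0)$ to the unique $x_1>0$ with $\Phi(x_1)=\tfrac12 x_0^2$. Hence the full forward return map $P$ satisfies $\Phi(P(x_0))=\tfrac12 x_0^2$, and its fixed points are precisely the positive zeros of $g(\cdot,\epsilon)$, namely $x_0=\epsilon/i$, $i\in\mathbb{N}^+$, which are the $x$–coordinates of the crossing points of $\Theta_i$. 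At both $(\pm\epsilon/i,0)$ the normal components $\dot y$ coming from the two sides share the same (nonzero) sign for small $\epsilon$, so these are genuine crossing points rather than tangencies. Since the $\epsilon/i$ accumulate at $O$ and shrink to $O$ as $\epsilon\to0$—where $Z^g_0=Z_0$ has no limit cycle by Proposition~\ref{zoooo}—this yields infinitely many crossing limit cycles bifurcating from the non–smooth equilibrium $O$.

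For hyperbolicity and the stability pattern I would differentiate $\Phi(P(x_0))=\tfrac12 x_0^2$ at a fixed point to obtain $P'(\epsilon/i)=\big(1+i\,g_x(\epsilon/i,\epsilon)\big)^{-1}$, and then evaluate $g_x$ using $\sin(\pi i)=0$ and $\cos(\pi i)=(-1)^i$, which gives $g_x(\epsilon/i,\epsilon)=(-1)^{i+1}\tfrac{\pi i^2}{\epsilon}e^{-i/\epsilon}$ and hence
$$P'(\epsilon/i)=\Big(1+(-1)^{i+1}\tfrac{\pi i^3}{\epsilon}e^{-i/\epsilon}\Big)^{-1}.$$
The correction term is nonzero, so $P'(\epsilon/i)\ne1$ and every $\Theta_i$ is hyperbolic; since $P$ is the forward return map, $\Theta_i$ is stable when this derivative is strictly less than $1$ and unstable when it is strictly greater than $1$, i.e.\ stable for $i$ odd and unstable for $i$ even, as claimed.

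The step I expect to be the main obstacle is guaranteeing that $P$ is well defined and smooth on a full one–sided neighborhood of $O$, i.e.\ that $\Phi$ is invertible there; this reduces to $\Phi'(x)=x+\epsilon g_x(x,\epsilon)>0$ for small $x>0$. Here the flatness of $e^{-1/x}$ is decisive: the crude bound $|g_x(x,\epsilon)|\le e^{-1/x}x^{-2}(1+\pi\epsilon)$ gives $\epsilon|g_x(x,\epsilon)|<x$ once $\epsilon$ is small, so choosing $\epsilon_0<\min\{|a|,|b|\}$ small enough secures invertibility. The remaining technical points—that the conic arcs $\Theta_i^\pm$ actually close up inside a small neighborhood of $O$, for which the constraints $0<|a|,|b|\le\tfrac12$ are used—are handled exactly as in the polynomial case of Proposition~\ref{ejhfjncjff}.
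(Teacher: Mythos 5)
Your proposal is correct and follows essentially the same route as the paper: reduce to a one-dimensional return map via the explicit first integrals $H^\pm$, identify the fixed points with the zeros $x_0=\epsilon/i$ of $g(\cdot,\epsilon)$, and differentiate the level-set relation to get $P'(\epsilon/i)=\bigl(1+(-1)^{i+1}\pi i^3 e^{-i/\epsilon}/\epsilon\bigr)^{-1}$, which matches the paper's formula $\frac{dP}{dx_0}(\epsilon/i)=\frac{\epsilon/i}{\epsilon/i-\pi i^2 e^{-i/\epsilon}\cos(\pi i)}$ and yields the same hyperbolicity and stability pattern. Your minor variations (reversibility of $X^g_\epsilon$ for the upper map, monotonicity of $\Phi$ for invertibility of the lower map) are harmless substitutes for the paper's geometric construction of $P_U$ and $P_L$, and your deferral of the domain estimates under $0<|a|,|b|\le\tfrac12$ to the polynomial case mirrors what the paper itself does.
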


Propositions~\ref{ejhfjncjff} and \ref{ejhadafjff} will be proved later on. If $a=b>0$ (resp. $<0$), then $Z_0=X_0=Y_0$ is a linear vector field having $O$ as a saddle (resp. center). Thus our results reveal that any finitely or infinitely many limit cycles can bifurcate from some linear saddle and center under non-smooth perturbations. Besides, observe that $Z^f_\epsilon$ and $Z^g_\epsilon$ are both piecewise smooth Hamiltonian systems. This means that it is possible for piecewise smooth Hamiltonian systems to have limit cycles, but this cannot occur
in smooth Hamiltonian systems as well known.

Now we are in a position to provide the proof of Theorem~\ref{bifurcation}.

\begin{proof}[{\bf Proof of Theorem~\ref{bifurcation}}]
For $Z=(X, Y)\in\Omega_0$ we consider the three-parametric perturbed vector field $Z^{\bm\epsilon}=(X^{\bm{\epsilon}}, Y^{\bm{\epsilon}})$ with
$$
\begin{aligned}
X^{\bm{\epsilon}}(x, y)&=\left(X_1(x, y)-X_{2x}(0, 0)\epsilon_1+X_{2x}(0, 0)\epsilon_1x, X_2(x, y)\right),\\
Y^{\bm\epsilon}(x, y)&=\left(Y_1(x, y)+Y_{2x}(0, 0)\epsilon_1+Y_{2x}(0, 0)\epsilon_1x+\epsilon_2Y_2(x, y), Y_2(x, y)+\epsilon_3\right),
\end{aligned}
$$
where $\bm\epsilon=(\epsilon_1, \epsilon_2, \epsilon_3)\in\mathbb{R}^3$ is a parameter vector. Clearly, $Z^{\bm{\epsilon}}=Z$ for ${\bm{\epsilon}}=(0, 0, 0)$, and $Z^{\bm{\epsilon}}\in\Omega$. We claim that for any small neighborhood of $\bm\epsilon=(0, 0, 0)$ there always exists
$\bm\epsilon_0$ in the neighborhood such that $Z^{\bm\epsilon_0}$ has a crossing limit cycle bifurcating from the non-smooth equilibrium $O$ of $Z$.
In fact, fixing $\epsilon_2=\epsilon_3=0$ we have
\begin{equation}\label{cmkc}
\begin{aligned}
&X^{\bm\epsilon}_1(0, 0)=-X_{2x}(0, 0)\epsilon_1,\qquad &&X^{\bm\epsilon}_2(0, 0)=0,\qquad &&X^{\bm\epsilon}_{2x}(0, 0)=X_{2x}(0, 0),\\
&Y^{\bm\epsilon}_1(0, 0)=Y_{2x}(0, 0)\epsilon_1,\qquad &&Y^{\bm\epsilon}_2(0, 0)=0,\qquad &&Y^{\bm\epsilon}_{2x}(0, 0)=Y_{2x}(0, 0),
\end{aligned}
\end{equation}
where $(X^{\bm\epsilon}_1, X^{\bm\epsilon}_2)$ and $(Y^{\bm\epsilon}_1, Y^{\bm\epsilon}_2)$ are the coordinates of $X^{\bm{\epsilon}}$ and $Y^{\bm{\epsilon}}$ respectively. So $O$ is an invisible-invisible fold-fold point of $Z^{\bm\epsilon}$ for $\epsilon_1>0$ and $\epsilon_2=\epsilon_3=0$. Besides, all orbits of $Z^{\bm\epsilon}$ near $O$ turn around $O$ because $X^{\bm\epsilon}_{2x}(0, 0)Y^{\bm\epsilon}_{2x}(0, 0)=X_{2x}(0, 0)Y_{2x}(0, 0)>0$. Here $X_{2x}(0, 0)Y_{2x}(0, 0)>0$ is due that $Z\in\Omega_0$ satisfies (\ref{condi}). Thus $O$ is either a non-smooth center or a pseudo-focus of $Z^{\bm\epsilon}$ for $\epsilon_1>0$ and $\epsilon_2=\epsilon_3=0$. By the time reversal, without loss of generality next we only work with the case where all orbits of $Z^{\bm\epsilon}$ near $O$ rotate counterclockwise, namely $X_{2x}(0, 0)>0$ and $Y_{2x}(0, 0)>0$.

If $O$ is a stable (resp. unstable) pseudo-focus of $Z^{\bm\epsilon}$ for $\epsilon_1>0$ and $\epsilon_2=\epsilon_3=0$, a direct application of Proposition~\ref{pseudohopf} yields that for given $\epsilon_1>0$ and $\epsilon_2=0$ there exists $\hat\epsilon_3=\hat\epsilon_3(\epsilon_1)>0$ such that $Z^{\bm\epsilon}$ with $\epsilon_1>0$, $\epsilon_2=0$ and $-\hat\epsilon_3<\epsilon_3<0$ (resp. $0<\epsilon_3<\hat\epsilon_3$)
admits a stable (resp. unstable) crossing limit cycle bifurcating from $O$. Thus, for any small neighborhood of $\bm\epsilon=(0, 0, 0)$ we can choose some $\bm\epsilon_0=(\epsilon_{10}, \epsilon_{20}, \epsilon_{30})$ satisfying $\epsilon_{10}>0$, $\epsilon_{20}=0$ and $0<|\epsilon_{30}|<\hat\epsilon_3(\epsilon_{10})$ such that $Z^{\bm\epsilon_0}$ has a crossing limit cycle bifurcating from $O$, that is, the claim holds in the case that $O$ is a pseudo-focus.

If $O$ is a non-smooth center of $Z^{\bm\epsilon}$ for $\epsilon_1>0$ and $\epsilon_2=\epsilon_3=0$, we can obtain an upper Poincar\'e map $P_U$ near $O$ which maps a point $(x_0, 0)$ with $x_0>0$ to a point $(x_1, 0)$ with $x_1<0$, and a lower Poincar\'e map $P_L$ near $O$ which maps $(x_1, 0)$
to $(x_0, 0)$. When $\epsilon_3=0$ and $\epsilon_2$ is perturbed to be $\epsilon_2\ne0$, it is easily verify that (\ref{cmkc}) still holds, i.e.,
$O$ is still an invisible-invisible fold-fold point. In this case, we also can define an upper Poincar\'e map $\tilde P_U$ near $O$ which maps a point $(x_0, 0)$ with $x_0>0$ to a point $(x_1, 0)$ with $x_1<0$, and a lower Poincar\'e map $\tilde P_L$ near $O$ which maps $(x_1, 0)$ to a point $(x_2, 0)$ with $x_2>0$. Clearly, $P_U=\tilde P_U$ because $X^{\bm\epsilon}$ is independent of $\epsilon_2$. Moreover, we can prove that $x_2>x_0$ if $\epsilon_2>0$. In fact,
considering the vector field $Y^{\bm\epsilon}$ we define the following two equations
\begin{equation}\label{ineqcanf}
\frac{dy}{dx}=\varphi_1(x, y):=\frac{Y_2(x, y)}{Y_1(x, y)+Y_{2x}(0, 0)\epsilon_1+Y_{2x}(0, 0)\epsilon_1x}
\end{equation}
for $\epsilon_2=\epsilon_3=0$, and
\begin{equation}\label{afineqcanf}
\frac{dy}{dx}=\varphi_2(x, y):=\frac{Y_2(x, y)}{Y_1(x, y)+Y_{2x}(0, 0)\epsilon_1+Y_{2x}(0, 0)\epsilon_1x+\epsilon_2Y_2(x, y)}
\end{equation}
for $\epsilon_2\ne0$ and $\epsilon_3=0$. Since $Y^{\bm\epsilon}_1(0, 0)=Y_{2x}(0, 0)\epsilon_1>0$, the denominators of $\varphi_1(x, y)$ and $\varphi_2(x, y)$ are positive in a sufficiently small neighborhood of $O$. Thus $\varphi_1(x, y)\ge\varphi_2(x, y)$ for $\epsilon_2>0$, and the equality holds only for $(x, y)=(0, 0)$. Applying the theory of differential inequality to equations (\ref{ineqcanf}) and (\ref{afineqcanf}), we obtain the
solution of equation (\ref{ineqcanf}) with the initial value $(x_1, 0)$ always lies above the
solution of equation (\ref{afineqcanf}) with the initial value $(x_1, 0)$ in the half plane $y\le0$. So $x_2>x_0$ if $\epsilon_2>0$, and then
$O$ is an unstable pseudo-focus of $Z^{\bm\epsilon}$ for $\epsilon_1>0$, $\epsilon_2>0$ and $\epsilon_3=0$. Repeating the analysis in the last paragraph and using Proposition~\ref{pseudohopf}, for any small neighborhood of $\bm\epsilon=(0, 0, 0)$ we can choose some $\bm\epsilon_0=(\epsilon_{10}, \epsilon_{20}, \epsilon_{30})$ satisfying $\epsilon_{10}>0$, $\epsilon_{20}>0$ and $0<\epsilon_{30}<\hat\epsilon_3(\epsilon_{10}, \epsilon_{20})$ such that $Z^{\bm\epsilon_0}$ has a crossing limit cycle bifurcating from $O$, that is, the claim also holds in the case that $O$ is a non-smooth center. This, together with the last paragraph, concludes statement (1) because $Z^{\bm\epsilon}\rightarrow Z$ as $\bm\epsilon\rightarrow0$.

Let $Z_0$ be the piecewise linear vector field given in (\ref{ppll}). Then $Z_0\in\Omega_1$ if either $a>0$ or $b>0$, and $Z_0\in\Omega_0\setminus\Omega_1$ if $a<0$ and $b<0$ as indicated in Proposition~\ref{zoooo}. Thus statement (2) is a direct conclusion of Propositions~\ref{ejhfjncjff} and \ref{ejhadafjff} because $Z^f_\epsilon\rightarrow Z_0$ and $Z^g_\epsilon\rightarrow Z_0$ as $\epsilon\rightarrow0$.
\end{proof}

As well known, it is a challenge objective to establish the bifurcation diagram for some bifurcations, particularly for the higher codimension bifurcations, since a higher codimension bifurcation usually consists of too many lower codimension ones. Speaking of bifurcation diagrams, we can obtain an important information from the proof of Theorem~\ref{bifurcation}, that is, the bifurcation diagram of any vector field in $\Omega_0$ must contain a bifurcation boundary where the codimension one pseudo-Hopf bifurcation occurs. A complete bifurcation diagram of the vector fields in $\Omega_0$ will be left as a future work. Actually, this is an extremely complex work, since there exist many possible local phase portraits for the unperturbed vector fields as seen in Theorem~\ref{normalform}, and such a bifurcation has the higher codimension.

Finally, we give the proofs of Propositions~\ref{ejhfjncjff} and \ref{ejhadafjff}.

\begin{proof}[{\bf Proof of Proposition~\ref{ejhfjncjff}}]
Clearly, $Z^f_\epsilon=Z_0$ for $\epsilon=0$. The rest of this proof is completed by the following four steps.

{\it Step 1. The upper Poincar\'e map $P_U$.}
Because of $a\ne0$, we can choose $\epsilon_1>0$ such that ${\rm sign}a={\rm sign}(a-\epsilon)$ for $0<\epsilon<\epsilon_1$. In this case, $X^f_\epsilon$ has a unique equilibrium $E_X:=(0, \epsilon/(a-\epsilon))$, which is a linear center if $a-\epsilon<0$ and a linear saddle if $a-\epsilon>0$.

When $E_X$ is linear center, i.e., $a-\epsilon<0$, it lies in the lower half plane $y<0$ because of $0<\epsilon<\epsilon_1$, and then it is not a real equilibrium for $Z_\epsilon$. From the center dynamics and the direction of the vector field $X^f_\epsilon$ on the $x$-axis, it follows that the orbit of $X^f_\epsilon$ with $0<\epsilon<\epsilon_1$ starting from $(x_0, 0)$ with $x_0>0$ enters into $y>0$, and reaches again the $x$-axis at a point $(x_1, 0)$ with $x_1<0$ as $t$ increases.

When $E_X$ is a linear saddle, i.e., $a-\epsilon>0$, it lies in the upper half plane $y>0$ because of $0<\epsilon<\epsilon_1$,
and the stable and unstable manifolds of it lie in
$$\left\{(x, y)\in\mathbb{R}^2\: x\ne0, y=-\frac{x}{\sqrt{a-\epsilon}}+\frac{\epsilon}{a-\epsilon}\right\}, \qquad
\left\{(x, y)\in\mathbb{R}^2\: x\ne0, y=\frac{x}{\sqrt{a-\epsilon}}+\frac{\epsilon}{a-\epsilon}\right\},$$
respectively. Thus the stable manifold intersects the $x$-axis at $(\epsilon/\sqrt{a-\epsilon} ,0)$, and the unstable manifold intersects
the $x$-axis at $(-\epsilon/\sqrt{a-\epsilon} ,0)$. Together with the direction of the vector field $X^f_\epsilon$ on $\{(x, 0)\in\mathbb{R}^2: -\epsilon/\sqrt{a-\epsilon}<x<\epsilon/\sqrt{a-\epsilon}\}$, we get that the orbit of $X^f_\epsilon$ with $0<\epsilon<\epsilon_1$ starting from $(x_0, 0)$ with $0<x_0<\epsilon/\sqrt{a-\epsilon}$ enters into $y>0$ and reaches again the $x$-axis at a point $(x_1, 0)$ with $-\epsilon/\sqrt{a-\epsilon}<x_1<0$ as $t$ increases.

According to the last two paragraphs, we can construct an upper Poincar\'e map $P_U$ as $x_1=P_U(x_0, \epsilon)$, which is defined for $0<x_0<\varpi_u(\epsilon)$ and $0<\epsilon<\epsilon_1$, where
\begin{eqnarray}
\varpi_u(\epsilon)=
\left\{
\begin{aligned}
&+\infty \qquad &&{\rm when~} E_X~ {\rm is~ a~ center,~ i.e.,}~ a-\epsilon<0,\\
&\epsilon/\sqrt{a-\epsilon}\qquad &&{\rm when~ }E_X~ {\rm is~ a~ saddle,~ i.e.,}~ a-\epsilon>0.
\end{aligned}
\right.
\label{rutireuiv}
\end{eqnarray}
Furthermore,
calculating the first integral $H^f_X$ of $X^f_\epsilon$ we get
$$
H^f_X(x, y)=\frac{1}{2}x^2-\frac{a-\epsilon}{2}y^2+\epsilon y,
$$
so that $P_U(x_0, \epsilon)$ satisfies $H^f_X(x_0, 0)=H^f_X(P_U(x_0), 0)$, i.e.,
\begin{eqnarray}
P_U(x_0, \epsilon)=-x_0\qquad {\rm for}\quad 0<x_0<\varpi_u(\epsilon)~~{\rm and}~~0<\epsilon<\epsilon_1.
\label{PUM}
\end{eqnarray}

{\it Step 2. The lower Poincar\'e map $P_L$.} Since $b\ne0$, there exists $\epsilon_2>0$ such that ${\rm sign}b={\rm sign}(b-\epsilon)$ for $0<\epsilon<\epsilon_2$. Throughout this step, $\epsilon_2$ can be reduced if necessary.
Consider the function
$$
F(x, \epsilon)=x+\epsilon\frac{\partial f(x, \epsilon)}{\partial x}.
$$
Due to $F(0, 0)=0$ and $F_x(0,0)=1$, by the Implicit Function Theorem there exists a function $x(\epsilon)$ defined for $0<\epsilon<\epsilon_2$ such that $x(0)=0$ and $F(x(\epsilon), \epsilon)=0$. In addition, $x(\epsilon)$ is given by
\begin{eqnarray}
x(\epsilon)=(-1)^{m+1}\epsilon\prod^m_{i=1}\left(\frac{i\epsilon}{m}\right)^2+\mathcal{O}(\epsilon^{2m+2})=(-1)^{m+1}\frac{(m!)^2}{m^{2m}}\epsilon^{2m+1}
+\mathcal{O}(\epsilon^{2m+2}).
\label{erieief}
\end{eqnarray}
By the definition of invisible fold point, $(x(\epsilon), 0)$ is an invisible fold point of $Y^f_\epsilon$ for $0<\epsilon<\epsilon_2$. Combining the direction of $Y^f_\epsilon$ on the $x$-axis, we know that the orbit of $Y^f_\epsilon$ near $(x(\epsilon), 0)$ starting from a point $(x_1, 0)$ with $x_1<x(\epsilon)$ evolves in $y<0$ until it reaches the $x$-axis at a point $(x_2, 0)$ with $x_2>x(\epsilon)$ again. In this case, we can define a lower Poincar\'e map $P_L$ as $x_2=P_L(x_1, \epsilon)$ for $x_1<x(\epsilon)$ closed to $x(\epsilon)$ and $0<\epsilon<\epsilon_2$. Since the first integral of $Y^f_\epsilon$ is
$$
H^f_Y(x, y)=\frac{1}{2}x^2+\epsilon f(x, \epsilon)-\frac{b-\epsilon}{2}y^2-\epsilon y,
$$
$P_L(x_1, \epsilon)$ satisfies
\begin{eqnarray}
\frac{1}{2}x_1^2+\epsilon f(x_1, \epsilon)=\frac{1}{2}P_L(x_1, \epsilon)^2+\epsilon f(P_L(x_1, \epsilon), \epsilon).
\label{PLM}
\end{eqnarray}

Next we precisely determine the definition domain of $P_L$. Notice that $E_Y:=(x(\epsilon), -\epsilon/(b-\epsilon))$ is an equilibrium of $Y^f_\epsilon$ for $0<\epsilon<\epsilon_2$. Calculating the eigenvalues of the Jacobian matrix of $Y^f_\epsilon$ at $E_Y$, we have that $E_Y$ is of focus type if $b-\epsilon<0$ from \cite[Theorem 5.1]{ZZF}, and a saddle if $b-\epsilon>0$ from \cite[Theorem 4.4]{ZZF}.

When $E_Y$ is of focus type, i.e., $b-\epsilon<0$, it lies in the upper half plane $y>0$ because of $\epsilon>0$. Moreover, $O$ is a linear center of $Y^f_\epsilon$ for $\epsilon=0$ due to ${\rm sign}b={\rm sign}(b-\epsilon)$ for $0<\epsilon<\epsilon_2$. Thus $\epsilon_2>0$ can be reduced such that $P_L(x_1, \epsilon)$ is defined for $-1<x_1<x(\epsilon)$ and $0<\epsilon<\epsilon_2$.

When $E_Y$ is saddle, i.e., $b-\epsilon>0$, it lies in the lower half plane $y<0$ because of $\epsilon>0$. $E_Y$ has one stable (resp. unstable) manifold intersecting the $x$-axis. Let $(x_s, 0)$ (resp. $(x_u, 0)$) be the intersection between the stable (resp. unstable) manifold and the $x$-axis.
Then $H^f_Y(x_u, 0)=H^f_Y(x_s, 0)=H^f_Y(E_Y)$, i.e.,
\begin{eqnarray}
\begin{aligned}
\frac{1}{2}(x_u)^2+\epsilon f(x_u, \epsilon)=\frac{1}{2}(x_s)^2+\epsilon f(x_s, \epsilon)&=\frac{1}{2}x(\epsilon)^2+\epsilon f(x(\epsilon), \epsilon)-\frac{b-\epsilon}{2}\left(\frac{-\epsilon}{b-\epsilon}\right)^2-\epsilon\left(\frac{-\epsilon}{b-\epsilon}\right)\\
&=\frac{\epsilon^2}{2(b-\epsilon)}+\mathcal{O}(\epsilon^3),
\end{aligned}
\label{ejhncjdnvsd}
\end{eqnarray}
where the last equality is due to (\ref{erieief}). Solving (\ref{ejhncjdnvsd}) we get
$$x_u=\frac{\epsilon}{\sqrt{b-\epsilon}}+\mathcal{O}(\epsilon^2),\qquad x_s=-\frac{\epsilon}{\sqrt{b-\epsilon}}+\mathcal{O}(\epsilon^2)$$
for $0<\epsilon<\epsilon_2$ by $x_u>x_s$. Consequently, $P_L(x_1, \epsilon)$ is defined for  $x_s<x_1<x(\epsilon)$ and $0<\epsilon<\epsilon_2$. Moreover, $x(\epsilon)<P_L(x_1)<x_u$.

In conclusion, we take the definition domain of $P_L(x_1, \epsilon)$ as $\varpi_l(\epsilon)<x_1<x(\epsilon)$,
where
\begin{eqnarray}
\varpi_l(\epsilon)=
\left\{
\begin{aligned}
&-1\qquad &&{\rm when~ }E_Y~ {\rm is~ of~ focus~type, ~ i.e.,}~ b-\epsilon<0, \\
&x_s=-\frac{\epsilon}{\sqrt{b-\epsilon}}+\mathcal{O}(\epsilon^2) \qquad &&{\rm when~} E_Y~ {\rm is~ a~ saddle,~ i.e.,}~ b-\epsilon>0.
\end{aligned}
\right.
\label{jfeuihuncjsdh}
\end{eqnarray}

{\it Step 3. The full Poincar\'e map $P$.} Take $\epsilon_0=\min\{\epsilon_1, \epsilon_2\}$ and $\varpi(\epsilon)=|x(\epsilon)|$. In what follows $\epsilon_0>0$ can be reduced if necessary. Let
$$I(\epsilon)=\left(\varpi(\epsilon), \min\{-\varpi_l(\epsilon), \varpi_u(\epsilon)\}\right).$$
By (\ref{erieief}) and the definitions of $\varpi_l(\epsilon)$ and $\varpi_u(\epsilon)$
the interval $I(\epsilon)$ is non-empty for $0<\epsilon<\epsilon_0$.
According to the last two steps, we construct $P$ as the composition $P(x_0, \epsilon)=P_L(P_U(x_0, \epsilon), \epsilon)$ for $x_0\in I(\epsilon)$ and $0<\epsilon<\epsilon_0$.
Hence, a fixed point of $P(x_0, \epsilon)$ in the interval $I(\epsilon)$ corresponds to a crossing periodic orbit of $Z^f_\epsilon$.
Furthermore, from (\ref{saasmajf}), (\ref{PUM}) and (\ref{PLM}) the map $P(x_0, \epsilon)$ satisfies
$$\frac{1}{2}x_0^2+\epsilon f(-x_0, \epsilon)=\frac{1}{2}P(x_0, \epsilon)^2+\epsilon f(P(x_0, \epsilon), \epsilon),$$
i.e.,
\begin{eqnarray}
\frac{1}{2}x_0^2-\epsilon x_0\prod^m_{i=1}\left(x_0^2-\left(\frac{i\epsilon}{m}\right)^2\right)=
\frac{1}{2}P(x_0, \epsilon)^2+\epsilon P(x_0, \epsilon)\prod^m_{i=1}\left(P(x_0, \epsilon)^2-\left(\frac{i\epsilon}{m}\right)^2\right).
\label{ejfheuinjef}
\end{eqnarray}

{\it Step 4. Crossing limit cycles.} Now we study the crossing limit cycles of $Z^f_\epsilon$ using the Poincar\'e map $P$.
Since $0<|a|\le1/2$ and $0<|b|\le1/2$ as assumed in Proposition~\ref{ejhfjncjff}, we have $\min\{-\varpi_l(\epsilon), \varpi_u(\epsilon)\}>\sqrt 2\epsilon+\mathcal{O}(\epsilon^2)$ for $0<\epsilon<\epsilon_0$, so that $i\epsilon/m<\min\{-\varpi_l(\epsilon), \varpi_u(\epsilon)\}$ for all $i=1, 2, \cdot\cdot\cdot, m$ and $0<\epsilon<\epsilon_0$. On the other hand, it follows from (\ref{erieief}) that $i\epsilon/m>|x(\mu)|$, i.e., $i\epsilon/m>\varpi(\epsilon)$, for all $i=1, 2, \cdot\cdot\cdot, m$ and $0<\epsilon<\epsilon_0$. So $i\epsilon/m\in I(\epsilon)$
for all $i=1, 2, \cdot\cdot\cdot, m$ and $0<\epsilon<\epsilon_0$. Associate with (\ref{ejfheuinjef}), $x_0$ is a fixed point of $P$ in $I(\epsilon)$ if and only if $x_0=i\epsilon/m$, which implies that $Z^f_\epsilon$ has exactly $m$ isolated and nested crossing periodic orbits,
namely crossing limit cycles. Moreover, these crossing limit cycles intersect the positive $x$-axis at $(i\epsilon/m, 0)$, $i=1, 2, \cdot\cdot\cdot, m$.
Using the first integrals $H^f_X$ and $H^f_Y$, we get that the $m$ limit cycles obey the algebraic curves $\Gamma^+_i$ and $\Gamma^-_i$ defined in Proposition~\ref{ejhfjncjff}, $i=1, 2, \cdot\cdot\cdot, m$.

Finally, in order to determine the hyperbolicity and stability of $\Gamma_i$, $i=1, 2, \cdot\cdot\cdot, m$, taking the derivative with respect to $x_0$ for (\ref{ejfheuinjef}),
we have
$$
\begin{aligned}
\frac{dP}{dx_0}\left(\frac{i\epsilon}{m}\right)&=\frac{\frac{i\epsilon}{m}-2\epsilon\left(\frac{i\epsilon}{m}\right)^2\prod^m_{k=1, k\ne i}\left(\left(\frac{i\epsilon}{m}\right)^2-\left(\frac{k\epsilon}{m}\right)^2\right)}
{\frac{i\epsilon}{m}+2\epsilon\left(\frac{i\epsilon}{m}\right)^2\prod^m_{k=1, k\ne i}\left(\left(\frac{i\epsilon}{m}\right)^2-\left(\frac{k\epsilon}{m}\right)^2\right)}\\
&=\frac{1-2\epsilon^{2m}\left(\frac{i}{m}\right)\prod^m_{k=1, k\ne i}\left(\left(\frac{i}{m}\right)^2-\left(\frac{k}{m}\right)^2\right)}
{1+2\epsilon^{2m}\left(\frac{i}{m}\right)\prod^m_{k=1, k\ne i}\left(\left(\frac{i}{m}\right)^2-\left(\frac{k}{m}\right)^2\right)}.
\end{aligned}
$$
Thus $0<\frac{dP}{dx_0}\left(\frac{i\epsilon}{m}\right)<1$ (resp. $>1$) if $m-i$ is even (resp. odd), that is,
$\Gamma_i$ is hyperbolic and stable (resp. unstable) if $m-i$ is even (resp. odd). The proof of Proposition~\ref{ejhfjncjff}
is finished.
\end{proof}

\begin{proof}[{\bf Proof of Proposition~\ref{ejhadafjff}}]
Obviously, $Z^g_\epsilon=Z_0$ for $\epsilon=0$. The study of the bifurcated crossing limit cycles is extremely similar to the proof of Proposition~\ref{ejhfjncjff}. So we neglect some details. In fact, comparing the vector fields $Z^f_\epsilon=(X^f_\epsilon, Y^f_\epsilon)$ and $Z^g_\epsilon=(X^g_\epsilon, X^g_\epsilon)$,
we see $X^f_\epsilon=X^g_\epsilon$, so that we get the same upper Poincar\'e map
\begin{eqnarray}
P_U(x_0, \epsilon)=-x_0\qquad {\rm for}\quad 0<x_0<\varpi_u(\epsilon)~~{\rm and}~~0<\epsilon<\epsilon_1.
\label{PUMMMM}
\end{eqnarray}
as defined in (\ref{PUM}). Here $\varpi_u(\epsilon)$ is given in (\ref{rutireuiv}).
Besides, $Y^f_\epsilon$ and $Y^g_\epsilon$ have the same expression except that the function $f$ is replaced by $g$. With the replacement,
$O$ is an invisible fold point of $Y^g_\epsilon$, and $Y^g_\epsilon$ has $(0, -\epsilon/(b-\epsilon))$ as an equilibrium, which is of focus type if $b-\epsilon<0$ and a saddle if $b-\epsilon>0$. Therefore, carrying out a similar argument to Step 2 in the proof of Proposition~\ref{ejhfjncjff}, we can choose some $\epsilon_2>0$ and define a lower Poincar\'e map $P_L(x_1, \epsilon)$ for $\tilde\varpi_l(\epsilon)<x_1<0$ and $0<\epsilon<\epsilon_2$, where
$$
\tilde\varpi_l(\epsilon)=
\left\{
\begin{aligned}
&-1\qquad &&{\rm when~ }(0, -\epsilon/(b-\epsilon))~ {\rm is~ of~ focus~type, ~ i.e.,}~ b-\epsilon<0, \\
&\tilde x_s=-\frac{\epsilon}{\sqrt{b-\epsilon}}+\mathcal{O}(\epsilon^2) \qquad &&{\rm when~} (0, -\epsilon/(b-\epsilon))~ {\rm is~ a~ saddle,~ i.e.,}~ b-\epsilon>0,
\end{aligned}
\right.
$$
and $(\tilde x_s, 0)$ is the intersection between the stable manifold of $(0, -\epsilon/(b-\epsilon))$ and the negative $x$-axis. Notice that $\varpi_l(\epsilon)$ defined in (\ref{jfeuihuncjsdh}) and $\tilde\varpi_l(\epsilon)$ are the same in the sense of neglecting the higher order terms.
Since the first integral of $Y^g_\epsilon$ is
$$
H^g_Y(x, y)=\frac{1}{2}x^2+\epsilon g(x, \epsilon)-\frac{b-\epsilon}{2}y^2-\epsilon y,
$$
$P_L(x_1, \epsilon)$ satisfies
\begin{eqnarray}
\frac{1}{2}x_1^2+\epsilon g(x_1, \epsilon)=\frac{1}{2}P_L(x_1, \epsilon)^2+\epsilon g(P_L(x_1, \epsilon), \epsilon).
\label{PLLM}
\end{eqnarray}

The above analysis allows us to define a full Poincar\'e map $P(x_0, \epsilon)=P_L(P_U(x_0, \epsilon), \epsilon)$ for $x_0\in \tilde I(\epsilon)$ and $0<\epsilon<\epsilon_0$, where
$$\tilde I(\epsilon)=(0, \min\{-\tilde\varpi_l(\epsilon), \varpi_u(\epsilon)\}),\qquad \epsilon_0=\min\{\epsilon_1, \epsilon_2\}.$$
Hence, a fixed point of $P(x_0, \epsilon)$ in the interval $\tilde I(\epsilon)$
corresponds to a crossing periodic orbit of $Z^g_\epsilon$.
Furthermore, from (\ref{PUMMMM}) and (\ref{PLLM}) it follows that $P(x_0, \epsilon)$ satisfies
$$\frac{1}{2}x_0^2+\epsilon g(-x_0, \epsilon)=\frac{1}{2}P(x_0, \epsilon)^2+\epsilon g(P(x_0, \epsilon), \epsilon),$$
i.e.,
\begin{eqnarray}
\frac{1}{2}x_0^2=
\frac{1}{2}P(x_0, \epsilon)^2+\epsilon e^{-1/P(x_0, \epsilon)}\sin\left(\frac{\pi\epsilon}{P(x_0, \epsilon)}\right).
\label{eafaeinjef}
\end{eqnarray}

Now we study the fixed points of $P(x_0, \epsilon)$ in $\tilde I(\epsilon)$. Since $0<|a|\le1/2$ and $0<|b|\le1/2$ as assumed in Proposition~\ref{ejhadafjff}, $\min\{-\tilde\varpi_l(\epsilon), \varpi_u(\epsilon)\}>\sqrt2\epsilon+\mathcal{O}(\epsilon^2)$ for $0<\epsilon<\epsilon_0$, so that $\epsilon/i\in\tilde I(\epsilon)$ for $i\in\mathbb{N}^+$ and $0<\epsilon<\epsilon_0$. Here $\epsilon_0$ can be reduced if necessary. As a consequence, by (\ref{eafaeinjef}) we get that $x_0$ is a fixed point of $P(x_0, \epsilon)$ in $\tilde I(\epsilon)$ if and only if $x_0=\epsilon/i$, $i\in\mathbb{N}^+$. This means that $Z^g_\epsilon$ has infinitely many nested crossing limit cycles.
Moreover, these crossing limit cycles intersect the positive $x$-axis at $(\epsilon/i, 0)$, $i\in\mathbb{N}^+$.
Using the first integrals, we get that these crossing limit cycles obey the algebraic curves $\Theta^+_i$ and $\Theta^-_i$ defined in Proposition~\ref{ejhadafjff}, $i\in\mathbb{N}^+$.

Finally, computing the derivative of $P(x_0, \epsilon)$ with respect to $x_0$ for (\ref{eafaeinjef}), we get
$$
\begin{aligned}
\frac{dP}{dx_0}\left(\frac{\epsilon}{i}\right)&=\frac{\epsilon/i}{\epsilon/i-\pi i^2e^{-i/\epsilon}\cos(\pi i)}.
\end{aligned}
$$
So $0<\frac{dP}{dx_0}\left(\frac{\epsilon}{i}\right)<1$ (resp. $>1$) if $i$ is odd (resp. even), which implies that $\Theta_i$ is hyperbolic and stable (resp. unstable) if $i$ is odd (resp. even). This ends the proof of Proposition~\ref{ejhadafjff}.
\end{proof}

{\footnotesize

}
\end{document}